\numberwithin{equation}{section}
\newtheorem{dualtheo}{Duality Theorem}
\newtheorem{gluingtheorem}{Gluing Theorem} 
\newtheorem{PLf}{Poincar\'e\,--\,Lelong formula}
\newtheorem{PJf}{Poisson\,--\,Jensen formula}
\newtheorem{comments}{Comments}
\newcommand{\RR}{\mathbb{R}} 
\newcommand{\CC}{\mathbb{C}} 
\newcommand{\NN}{\mathbb{N}} 
\newcommand{\BB}{\mathbb{B}}  
\newcommand{\const}{{\rm const}} 
\newcommand{\pt}{{\rm pt}} 
\newcommand{\dd}{\,{\rm d}}
\DeclareMathOperator{\dist}{dist} 
\DeclareMathOperator{\clos}{clos} 
\DeclareMathOperator{\Int}{int}
\DeclareMathOperator{\Meas}{Meas}
\DeclareMathOperator{\har}{har} 
\DeclareMathOperator{\Hol}{Hol}
\DeclareMathOperator{\comp}{cmp}
\DeclareMathOperator{\Zero}{Zero} 
\DeclareMathOperator{\sbh}{sbh}
\DeclareMathOperator{\dsbh}{dsbh} 
\DeclareMathOperator{\supp}{supp} 
\DeclareMathOperator{\loc}{loc}
\DeclareMathOperator{\sgn}{sgn} 
\DeclareMathOperator{\dom}{dom}
\DeclareMathOperator{\conn}{conn}
\DeclareMathOperator{\Conn}{Conn}
\begin{document}

\title{Preorders on Subharmonic Functions and Measures\thanks{The work was supported by a Grant of the Russian Science Foundation (Project No. 18-11-00002, Bulat N. Khabibullin, sections 1--3, 7--12), and by a Grant of the Russian Foundation of Basic Research (Projects No.~19-31-90007, Enzhe B. Menshikova, sections 4--6).}
}
\subtitle{with Applications to the Distribution of Zeros of Holomorphic Functions}
\author{Bulat~N.~Khabibullin
\and Enzhe~B.~Menshikova
}


\institute{Bulat N. Khabibullin
\at
Department of Mathematics and IT, Bashkir State University, Ufa, Bashkortostan, Russian Federation
\\
\email{khabib-bulat@mail.ru}           
\\
Enzhe B. Menshikova
\at
Department of Mathematics and IT, Bashkir State University, Ufa, Bashkortostan, Russian Federation 
\\
\email{algeom@bsu.bashedu.ru}
}

\date{}

\maketitle

\begin{abstract}
Let $X$ be a class of  extended numerical  functions on a domain $D$ of $d$-dimensional Euclidean space $\mathbb R^d$, $H\subset X$. Given $u,M\in X$, we  write $u\prec_H M$ if there is a function $h\in H$ such that $u+h\leq M$ on $D$. We consider this special preorder $\prec_H$ for a pair of subharmonic functions $u, M$ on $D$ in cases where $H$ is the space of all harmonic functions on $D$ or $H$ is the convex cone of all subharmonic functions $h \not\equiv -\infty$ on $D$. Main results are  dual equivalent forms for this preorder $\prec_H$ in terms of balayage processes for Riesz measures of subharmonic functions $u$ and $M$, for Jensen and Arens\,--\,Singer (representing) measures, for potentials of these measures, and for special test functions generated by subharmonic functions on complements $D\!\setminus\!S$ of non-empty precompact subsets $S\Subset D$. Applications to holomorphic functions $f$ on a domain $D\subset \mathbb C^n$ relate to the distribution of zero sets of functions $f$ under upper restrictions $|f|\leq \exp M$ on $D$. If a domain $D\subset \mathbb C$ is a finitely connected domain with non-empty exterior or a simply connected domain with two different points
 on the boundary of $D$,  then our conditions for the distribution of zeros of $f\neq 0$ with $|f|\leq \exp M$ on $D$  are both necessary and sufficient.
\keywords{subharmonic function\and balayage\and Jensen measure\and
representing measure\and holomorphic function\and zero set\and Hausdorff measure}
\subclass{Primary: 31B05; Secondary: 32A60\and31C05\and31B15\and31B25\and31A05\and30C85\and31A15\and32E35}
\end{abstract}

\tableofcontents

\textbf{\section{Introduction}\label{int}}

Let $(X,+,0, \leq)$ be a preordered monoid equipped with a binary operation-addi\-t\-i\-on $+$ with a neutral element $0$ and a reflexive and transitive preorder $\leq$, and let $H\subset X$ be a preordered submonoid in $X$, $0\in H$. We can define a special  preorder $\prec_H$ on $X$:  
\begin{equation}\label{prec}
x'\prec_H x\quad\text{ \it if there exists $h\in H$ such that $x'+h\leq x$}. 
\end{equation} 
This preorder $\prec_H$ is weaker than the original preorder $\leq$. 
In \cite[Ch.~VI, 8]{Brelot}, \cite[\S~4.1]{ConCor}, another specific  order $\preccurlyeq_H$ of M.~Brelot for some ordered groups $(X,+,0,\leq)$ with $H\subset X^+:=\{x\in X\colon 0\leq x\}$ was considered: {\it $x'\preccurlyeq x$ if there exists $h\in H$ such that $x'+h= x$}. This specific  order $\preccurlyeq_H$ 
 is stronger than the original order $\leq$.  
Generally speaking, our special preorder $\prec_H$ is even strictly weaker than the original order $\leq$ and the specific Brelot order $\preccurlyeq_H$.
In this article,  we study only ``subharmonic'' and ``harmonic''
versions of preorder $\prec_H$: 
\begin{enumerate}
\item[{[O]}] \underline{Throughout this article}, $d\in \NN:=\{1,2, \dots \}$,
 $\NN_0:=\{0\}\cup \NN$, $\RR$ is the {\it real line\/},  $\RR^d$ is the $d$-dimensional Euclidean space $\RR^d$ with the  Euclidean norm $|x|:=\sqrt{x_1^2+\dots+x_d^2}$ of $x=(x_1,\dots ,x_d)\in \RR^d$ and the {\it distance function\/} $\dist (\cdot, \cdot)$; $O\subset \RR^d$ is a {\it non-empty open set}, and a fixed point $o\in O$ is often considered as an {\it origin point of} $O$, 
unless they are a notation for  big-$O(\cdot)$ or little-$o(\cdot)$.

\item[{[L]}] $X:=L_{\loc}^1(O)$ is the set of all extended numerical  
 functions $f\colon O\to \overline \RR$ that locally $\lambda_d$-integrable on $O$, where   $\lambda_d$ is  the {\it Lebesgue measure on\/} $\RR^d$, and $ \overline \RR:=\RR\cup \{\pm\infty\}$ is the {\it extended real line\/} in the end topology with two ends $\pm \infty$, with   the usual linear order $\leqslant$ on $\RR$ complemented by the inequalities $-\infty \leqslant r\leqslant +\infty$ for $r\in \overline{\RR}$, with the {\it positive real axis} $\RR^+:= \{r\in \RR\colon r\geqslant 0\}$.
 
\item[{[$\leq$]}] $L_{\loc}^1(O)$ is equipped with the  \textit{pointwise 
preorder:\/} $f\leq g$ {\it on $O$} for $f,g\in L_{\loc}^1(O)$ if {\it $f(x)\leqslant g(x)$  for\/ $\lambda_d$-almost every\/ $x\in O$.}

\item[{[H]}] $H:=\sbh_*(O):=\bigl\{u\in \sbh(O)\colon u\not\equiv -\infty \bigr\}\subset L^1_{\loc}(O)$, where $\sbh(O)$ is the convex cone over $\RR$ of all subharmonic (convex for $d=1$) functions on $O$  ({\it the subharmonic version}\/),  
\underline{or} $H:=\har(O):=\sbh(O)\cap \bigl(-\sbh(O)\bigr)$ is the vector space over $\RR$ of all harmonic (affine for $d=1$) functions on $O$ ({\it the harmonic version\/}). If $u\leq M$ on $O$ for $u,M\in \sbh(O)$, then $u(x)\leqslant M(x)$ for each $x\in O$ \cite{HK}, \cite{Helms}, \cite{Landkoff},  \cite{R}, \cite{Doob}. 
\end{enumerate}

Our main goal is to obtain dual equivalent conditions for the preorder \eqref{prec} in the case of an arbitrary pair of subharmonic functions $u,M\in \sbh_*(O)$  in the role of $x',x\in X$ in terms of their Riesz measures. Our dual descriptions use 
both a linear balayage and an affine balayage for Riesz measures, for Jensen measures and for their potentials in the subharmonic version $H:=\sbh_*(O)$ (see Theorem \ref{crit1} in Sec.~\ref{Sssh}), and for Riesz measures, for Arens\,--\,Singer measures,  and for their potentials in the harmonic version $H:=\har(O)$
(see Theorem \ref{crit2} in Sec.~\ref{Sshv}). 
Our interest in the preorder $\prec_H$ on subharmonic functions is largely motivated by its applications to the non-triviality of weighted spaces of holomorphic functions
\cite{Kh010}, \cite{Kha01}, \cite[\S~10]{Kha01II}, \cite{KhaRozKha19}, \cite[9.1]{KhaRozKha19}, to the description of zero sets for functions of this spaces \cite{Kha91}, \cite{Kha93}, \cite{Kha94}, 
\cite[III]{Koosis96}, \cite{Kha96}, \cite[\S\S~8, 11]{Kha01II}, \cite{Kha03MN}, \cite{Kha06}, \cite{Kha07}, \cite{Kha09}, 
 \cite{Kha12}, \cite{BaiTalKha16},  \cite{KhaKha19}, \cite{KhaKhaChe09}, \cite{KhaRoz18}, \cite[9.2, 9.3]{KhaRozKha19}, \cite{KhaTam17A}, \cite{KudKha09},  \cite{MenKha19},
to the approximation by exponential and similar systems in function spaces \cite{Kha94BM}, \cite{Kha99}, \cite{Kha12}, \cite{KhaTalKha15},
to the representation of meromorphic functions \cite{Kha96}, \cite{KhCV98}, \cite{Kha02}, \cite{Kha03MN}, \cite{Kha06}, \cite{Kha07}, \cite{Kha09}, \cite[9.4]{KhaRozKha19}, \cite{KudKha09},  etc. 

We denote by $Y^X$ the set of all functions  $f\colon X\to Y$, where values $f(x)$  is not necessarily defined for all $x \in X$, and 
the \textit{ domain of definition\/} of function $f$ is a set
\begin{equation*}
\dom f:=\bigl\{x\in X\colon \text{\it  there is $y\in Y$ such that $f(x)=y$}\bigr\}=f^{-1}(Y)\subset X.  
\end{equation*}
\begin{definition}[{\rm see \cite{KhaRozKha19}, \cite{MenKha19}, \cite{KhaKha19}; cf. \cite{BH}, \cite{Landkoff}, \cite[Ch.~9]{Meyer}, \cite[Ch.~7]{ConCor}, \cite{Brelot}, \cite{Doob}, \cite{Helms}, \cite[\S~4]{Kha01I}, \cite[\S~7]{Kha01II}}]\label{bal}
Let $\langle\cdot, \cdot \rangle\in \overline{\RR}^{X\times Y}$, i.e.,
$\dom \langle\cdot, \cdot \rangle \ni (x,y)\overset{\langle\cdot, \cdot \rangle}{\longmapsto} \langle x, y \rangle\in \overline{\RR}$. 

Let $x'\in X$, $x\in  X$, $Y'\subset Y$, and $\{x',x\}\times Y'\subset 
\dom \langle\cdot, \cdot \rangle$. We write $x'\curlyeqprec_{Y'} x$ and say that  $x$ is an {\it affine $Y'$-balayage of\/} $x'$,   if there is a constant $c\in \RR$ such that    
$\langle x', y \rangle\leqslant \langle x, y \rangle+c$
{\it for each} $y\in Y'$,  but if we can choose this constant $c:=0$, then  
 $x$ is a {\it linear $Y'$-balayage of\/} $x'$ and we write  $x'\preceq_{Y'} x$.

Let $y'\in Y$, $y\in  Y$, $X'\subset X$, and $X'\times \{y',y\}\subset 
\dom \langle\cdot, \cdot \rangle$. We write $y'\curlyeqprec_{X'} y$ and say that  $y$ is a {\it affine $X'$-balayage of $y'$,}   if  there is a constant $c\in \RR$ such that      $\langle x, y' \rangle\leqslant \langle x, y \rangle+c$
{\it for each} $x\in X'$,  but if we can choose this constant $c:=0$, then  
 $y$ is a {\it linear $X'$-balayage of\/} $y'$ and we write  $y'\preceq_{X'} y$.
\end{definition}

Let $\mathbb R^d_{\infty}:=\mathbb R^d \cup \{\infty\}$
is the {\it  Alexandroff\/} one-point {\it compactification\/} of $\mathbb R^d$, and  let $Q$ be a subset in $\RR_{\infty}^d$.
We denote by $\mathcal B (Q)$ the class of all Borel subsets in $Q$. 
The \textit{closure\/} $\clos Q$, the\textit{ interior\/} $\Int Q$  and the \textit{boundary\/} $\partial Q$ of $Q$ will always be taken relative to $\RR^d_{\infty}$. For $Q'\subset Q\subset \RR^d_{\infty}$ we write  $Q'\Subset Q$ if $\clos Q'\subset \Int Q$. 

In our article, we use the linear and affine balayage only in the following variants:
\begin{enumerate}
\item[{[X]}] $X$ is a subset of one of the classes $\Meas^+(Q)$ of all (Radon positive) {\it measures\/} $\mu$ on   $Q\in \mathcal B(\mathbb R^d_{\infty})$ \cite{Bourbaki}, 
 \cite[Appendix A]{R} \underline{or} $\Meas(Q):=\Meas^+(Q)-\Meas^+(Q)$  of {\it charges} \cite{Landkoff} \underline{or} $\Meas_{\comp}(Q)$ of charges $\mu \in \Meas(Q)$ with a compact {\it support\/} $\supp \mu \Subset Q$ \underline{or} $\Meas_{\comp}^+(Q):=\Meas_{\comp}(Q)\cap \Meas^+(Q)$ \underline{or} $\Meas^{1+}(Q)$ of {\it probability measures\/} \underline{or} $\Meas_{\comp}^{1+}(Q):=\Meas_{\comp}(Q)\cap \Meas^{1+}(Q)$ \underline{or}  Jensen or Arens\,--\,Singer (representing) measures. 
\item[{[Y]}] $Y\subset \sbh(Q)$, where $\sbh(Q)$ 
 consists of the restrictions to $Q$ of subharmonic functions on open sets $O \supset Q$; $\har(Q):=\sbh(Q)\cap \bigl(-\sbh(Q)\bigr)$. 
\item[{[$\langle\,, \, \rangle$]}] the function $\langle\cdot, \cdot \rangle \in \overline{\RR}^{X\times Y}$ from Definition \ref{bal} is defined as the Lebesgue integral with respect to a measure or charge:
\begin{equation*}
\langle \mu, v \rangle:=\int v\dd \mu, \quad (\mu, v)\in X\times Y. 
\end{equation*}
\end{enumerate}
It is important the following. If we explicitly indicate a subset $Q'\subset Q$ such that  measures from $X'$ or functions from $Y'$ are defined only on $Q'$, then we pass from measures $\mu \in X$ and functions $v\in Y$ to their {\it restrictions} $\mu\bigm|_{Q'}$ and $v\bigm|_{Q'}$ to $Q'$. So, if $u\in \sbh(Q)$ and $M\in \sbh (Q)$, but $Q'\subset Q$ and  $\mathcal M(Q')\subset \Meas (Q')$, then the relation $u\curlyeqprec_{\mathcal M(Q')} M$ means that {\it there is a constant $C\in \RR$ such that}  
\begin{equation}\label{MQ}
\int_{Q'}u\dd \mu \leqslant \int_{Q'} M \dd \mu+C\quad\text{\it for each $\mu\in \mathcal M(Q')$}. 
\end{equation}
Similarly, if  $\vartheta  \in \Meas(Q)$ and $\mu \in \Meas(Q)$, but $Q'\subset Q$ and 
$\mathcal S(Q')\subset \sbh (Q')$, then the relation $\vartheta\curlyeqprec_{\mathcal S(Q')}\mu $ means that {\it there is a constant $C\in \RR$ such that}  
\begin{equation}\label{SQ}
\int_{Q'}v\dd \vartheta \leqslant \int_{Q'} v \dd \mu+C\quad\text{\it for each $v\in
 \mathcal S(Q')$}. 
\end{equation}

\begin{example}[{\rm \cite[3]{Gamelin}, \cite{Koosis}, \cite{Anderson}, \cite{Koosis96}, \cite{C-R}, \cite{C-RJ}, \cite{Schachermayer}, \cite{HN}, \cite{Kha91}, \cite{Kha93}, \cite{Kha01II}, \cite{Kha03}, \cite{Kh010}, \cite{KhaKha19}, \cite{KhaTalKha15}, \cite{BaiTalKha16}, \cite{MOS}}]\label{sbhJ} 
We denote by $\delta_o\in \Meas_{\comp}^{1+}(O)$  the {\it Dirac measure\/} with $\supp \delta_o=\{o\}$. 
If $\mu\in \Meas_{\comp}^+(O)$  is  a linear $\sbh(O)$-balayage of  $\delta_{o}$, then $\mu$ is called a {\it Jensen measure for\/} $o$. The convex set of all these Jensen  measures is denoted by $J_o(O)\subset \Meas_{\comp}^{1+}(O)$. If $D\Subset O$ is a domain with  non-polar boundary $\partial D$, $o\in D$, and 
a function $\omega_D\colon D\times \mathcal B(\partial D)\to [0,1]$ is the \textit{harmonic measure for\/} $D$ \cite[4.3]{R}, \cite{HK}, then the \textit{harmonic measure $\omega_D(o,\cdot)$ at\/} $o$ belongs to $J_o(D)\cap \Meas^{1+}(\partial D)$. 
If $b_d$ is the volume of the {\it unit ball\/} $\BB\subset \RR^d$ and  
$s_{d-1}$ is the area of the {\it unit sphere\/} $\partial \BB\subset \RR^d$, $s_0:=2$, then the restriction of  $\frac{1}{b_d}\lambda_d$ to $\BB$ and the measure  
$\frac{1}{s_{d-1}}\sigma_{d-1}$, where $\sigma_{d-1}$ is the usual measure of area on the {\it unit sphere\/}  $\partial \BB$, belong to  $J_0(r\BB)$ if  $r>1$. 
\end{example}
\begin{example}[{\rm \cite[3]{Gamelin}, \cite{Sarason}, \cite{Kha96}, \cite{Kha03}, \cite{Kha01II}, \cite{Kha07}, \cite{KudKha09}}]\label{sbhAS} If $\mu\in \Meas_{\comp}^+(O)$ is a linear  $\har(O)$-balayage of $\delta_{o}$, then  $\mu$ is called an {\it Arens\,--\,Singer  measure for\/} $o\in O$. The convex set of all these Arens\,--\,Singer measures is denoted by $AS_o(O)\supset J_o(O)$. Arens\,--\,Singer measures are often referred to as representing measures.
\end{example}

\section{Main result for the subharmonic version}\label{Sssh}  
\setcounter{equation}{0}

Given  $f\in F\subset {\overline{\RR}}^X $, we set $f^+\colon x\mapsto \max \{0,f(x)\}$ for each $x\in \dom f$, and  $F^+:=\{f\in F \colon f=f^+\}$, 
$F^{\uparrow}:=\Bigl\{ f\in {\overline{\RR}}^X \colon \text{\it there is an increasing 
 sequence $(f_j)_{j\in \NN}$, $f_j\in F$,}\\ \text{$\dom f_j=X$,
\it such that $f(x)=\lim\limits_{j\to \infty} f_j(x)$ for each $x\in X$\/ {\rm (we write $f_j\underset{j\to \infty}{\nearrow} f)$}} \Bigr\}$.
\begin{proposition}\label{pr:up}
Let $F\subset \overline\RR^X$ be closed relative to the $\max$-operation. If 
$f_{kj}\in F$, $k\in \NN$, $j\in \NN$, $\dom f_{kj}=X$, and 
$f_{kj}\underset{j\to \infty}{\nearrow} f_k\underset{k\to \infty}{\nearrow} f$,  then 
$F\ni  \max\limits_{k,j\leqslant n} f_{kj}\underset{n\to \infty}{\nearrow}f$, $(F^\uparrow)^{\uparrow}=F^\uparrow$.
\end{proposition}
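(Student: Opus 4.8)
The plan is to prove the first assertion by a two-sided squeeze and then to read off the idempotency $(F^\uparrow)^\uparrow=F^\uparrow$ from it. First I would set $g_n:=\max_{k,j\leqslant n}f_{kj}$. Since $F$ is closed relative to the $\max$-operation and each $f_{kj}$ lies in $F$ with $\dom f_{kj}=X$, a finite induction shows $g_n\in F$ and $\dom g_n=X$; enlarging the index set of the maximum gives $g_n\leqslant g_{n+1}$ on $X$, so $g_n\underset{n\to\infty}{\nearrow}g$ for some $g\in\overline{\RR}^X$ with $\dom g=X$. The whole content of the first claim is then the identity $g=f$.

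For the upper estimate I fix $x\in X$: from $f_{kj}\underset{j\to\infty}{\nearrow}f_k$ one has $f_{kj}(x)\leqslant f_k(x)$ for all $j$, and from $f_k\underset{k\to\infty}{\nearrow}f$ the numbers $f_k(x)$ increase to $f(x)$; hence $g_n(x)=\max_{k,j\leqslant n}f_{kj}(x)\leqslant\max_{k\leqslant n}f_k(x)=f_n(x)\leqslant f(x)$, and letting $n\to\infty$ yields $g(x)\leqslant f(x)$. For the lower estimate I fix $x\in X$ and $k\in\NN$: for every $j$ and every $n\geqslant\max\{k,j\}$ the pair $(k,j)$ is among those over which the maximum defining $g_n$ is taken, so $g_n(x)\geqslant f_{kj}(x)$, whence $g(x)\geqslant\lim_{j\to\infty}f_{kj}(x)=f_k(x)$; as $k$ was arbitrary, $g(x)\geqslant\lim_{k\to\infty}f_k(x)=f(x)$. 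Combining the two estimates, $g=f$, which is exactly $F\ni\max_{k,j\leqslant n}f_{kj}\underset{n\to\infty}{\nearrow}f$; in particular $f\in F^\uparrow$.

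For $(F^\uparrow)^\uparrow=F^\uparrow$, the inclusion $F^\uparrow\subseteq(F^\uparrow)^\uparrow$ is trivial, since any $f\in F^\uparrow$ has $\dom f=X$ and is the (increasing) limit of the constant sequence $f_j\equiv f$ regarded as a sequence in $F^\uparrow$. Conversely, let $f\in(F^\uparrow)^\uparrow$, so $f_k\underset{k\to\infty}{\nearrow}f$ with $f_k\in F^\uparrow$ and $\dom f_k=X$; choosing for each $k$ an increasing sequence $(f_{kj})_j$ in $F$ with $\dom f_{kj}=X$ and $f_{kj}\underset{j\to\infty}{\nearrow}f_k$, the first part applies verbatim to the double array $(f_{kj})$ and gives $F\ni\max_{k,j\leqslant n}f_{kj}\underset{n\to\infty}{\nearrow}f$, hence $f\in F^\uparrow$.

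I do not expect a genuine obstacle here; the only points needing care are bookkeeping ones — keeping track that the finite maxima $g_n$, and the constant sequences used for the trivial inclusion, remain in $F$ resp.\ $F^\uparrow$ with domain all of $X$ — together with the elementary but essential observation that increasing pointwise convergence forces $f_{kj}\leqslant f_k\leqslant f$ on $X$, which is what makes the upper estimate in the squeeze close.
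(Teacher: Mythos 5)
Your argument is correct: the two-sided squeeze $g_n\leqslant f_n\leqslant f$ and $g\geqslant f_{kj}$ for all fixed $(k,j)$ establishes $\max_{k,j\leqslant n}f_{kj}\nearrow f$, and the idempotency of $\uparrow$ follows exactly as you say (the constant sequence for one inclusion, the double-array reduction for the other). The paper itself offers no proof — it declares this proposition ``obvious'' per its stated convention — and your write-up is precisely the standard verification that was left implicit, with the right points of care (finite maxima stay in $F$ with full domain, and monotone convergence forces $f_{kj}\leqslant f_k\leqslant f$).
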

\underline{Everywhere below,} if some  proposition is not proved or not commented, then this proposition is obvious and formulated only for the convenience of references to it.

\underline{Throughout this article,}  $D$ is a {\it non-empty domain in\/ $\mathbb \RR^d$}, and a {\it point} $o\in D$ or, in more general cases, a {\it non-empty subset\/} $S_o\Subset D$, $S_o\in \mathcal B(D)$,  will play a role  of an  {\it origin for} $D$.  
The theory of distributions or generalized functions uses test finite positive functions to define the usual order relation on distributions or measures/charges. We consider  various classes of test functions generated by subharmonic functions near the boundary $\partial D$ of $D$ or on $D\!\setminus\!S_o$ to study the order relation $\prec_H$ on subharmonic functions \cite{KhaTalKha15}, \cite{KhaTam17A}, \cite{KhaTam17L}, \cite[2.1]{KhaRoz18}, \cite{KhaAbdRoz18}, \cite{MenKha19}, \cite[Theorem 1]{KhaKha19}.   

So, we use the class  $\mathcal G_o(D\!\setminus\!S_o)\subset JP_o(D)$ of all extended Green's functions $g_{D'}(\cdot,o)$ together with the class  $\varOmega_o (D\!\setminus\!S_o)\subset J_o(D)$ of all harmonic measures  $\omega_{D'}(o, \cdot)$, whose domains $D' \Subset \RR^d$ run through all regular (for the Dirichlet problem) domains such that $S_o\Subset D'\Subset D$.
We also use various wider classes of test subharmonic functions on $D\!\setminus\!S_o$ and their extensions.
We define  subclasses $\sbh_*(D\!\setminus\!S_o)$ of functions that {\it vanish on  the boundary $\partial D\subset \RR_{\infty}^d$:}
\begin{equation}\label{{s0}0}
\sbh_0(D\!\setminus\! S_o):=\Bigl\{ v\in \sbh (D\!\setminus\!S_o)
\colon \lim_{D\ni x'\to x} v(x')=0 \text{ \it for each $x\in \partial D$}\Bigr\},
\end{equation}
and {\it vanish near the boundary $\partial D$:}
\begin{equation}\label{{s0}00}
\sbh_{00}(D\!\setminus\!S_o):=\bigl\{ v\overset{\eqref{{s0}0}}{\in} \sbh (D\!\setminus\!S_o)
\colon   v\equiv 0 \text{ \it on $D\!\setminus\!S(v)$ for some
$S(v)\Subset D$}\bigr\}.
\end{equation}
Under notations 
\begin{equation}\label{Ob}
\begin{split}
\sbh^+(Q)&:=\bigl(\sbh(Q)\bigr)^+, \quad Q\subset \RR^d,\\
\sbh (Q; \leq b_+)&:=
\bigl\{v\in \sbh(Q)\colon v\leq b_+\text{ on $Q$}\bigr\},  
\quad b_+ \in \RR, 
\end{split}
\end{equation}
using \eqref{{s0}0}--\eqref{{s0}00}, we define classes of {\it test  
 functions}
\begin{equation}\label{sbh0}
\begin{split}
\sbh_0(D\!\setminus\!S_o; \leq b_+)
&:=\sbh_0(D\!\setminus\! S_o)\bigcap \sbh (D\!\setminus\!S_o; \leq b_+),\\
\sbh_0^+(D\!\setminus\!S_o; \leq b_+)
&:=\Bigl(\sbh_0(D\!\setminus\!S_o; \leq b_+)\Bigr)^+,\\
\sbh_0^{+\uparrow}(D\!\setminus\!S_o; \leq b_+)
&:=\Bigl(\sbh_0^+(D\!\setminus\!S_o; \leq b_+)\Bigr)^{\uparrow},
\\
\sbh_{00}(D\!\setminus\!S_o; \leq b_+) &:=\sbh_{00}(D\!\setminus\!S_o)\bigcap \sbh(D\!\setminus\!S_o; \leq b_+)\\
\sbh_{00}^+(D\!\setminus\!S_o; \leq b_+) &:=\Bigl(\sbh_{00}(D\!\setminus\!S_o; \leq b_+) \Bigr)^+.
\end{split}
\end{equation}

A single-point set $\{o\}$ is denoted as $o$. For example, 
$O\!\setminus\!o:=O\!\setminus\!\{o\}$, $\RR\!\setminus\!0:=\RR\!\setminus\!\{0\}$,  
$o\cup Q:=\{o\}\cup Q$ for $Q\subset \RR_{\infty}^d$, etc. 

\begin{definition}[{\rm \cite{R}, \cite{HK}, \cite{Landkoff}}]\label{df:kK} 
 For $q\in \RR$ and $d\in \NN$, we set  
\begin{subequations}\label{kK}
\begin{align}
k_q(t)& := \begin{cases}
\ln t  &\text{ if $q=0$},\\
 -\sgn (q)  t^{-q} &\text{ if $q\in \RR\!\setminus\!0$,} 
\end{cases}
\qquad  t\in \RR^+\!\setminus\!0,
\tag{\ref{kK}k}\label{{kK}k}
\\
K_{d-2}(x,y)&:=\begin{cases}
k_{d-2}\bigl(|x-y|\bigr)  &\text{ if $x\neq y$},\\
 -\infty &\text{ if $x=y$ and $d\geqslant 2$},\\
0 &\text{ if $x=y$ and  $d=1$},\\
\end{cases}
\quad  (x,y) \in \RR^d\times \RR^d.
\tag{\ref{kK}K}\label{{kK}K}
\end{align}
\end{subequations}
 \end{definition}

\begin{example}[{\rm \cite[3]{Gamelin}, \cite{Sarason}, \cite{Kha96}, \cite{Kha01II}, \cite{Kha03}, \cite[Definition 6]{Kha07},  \cite[\S~4]{KudKha09}}]\label{expASP}
A function $V\in \sbh_{00}(O\!\setminus\!o)$  
is called an {\it Arens\,--\,Singer potential  on $O$ with pole at $o\in O$\/}  if 
\begin{equation}\label{ASpc+}
V(y)\leqslant -K_{d-2}(o,y)+O(1)\quad\text{for $o\neq y\to o$}. 
\end{equation}

The class of all Arens\,--\,Singer potentials  on $O$ with pole at $o\in O$ denote by $ASP(O\!\setminus\!o)$.  
Besides, we use a subclass 
\begin{equation}\label{ASP1}
ASP^1(O\!\setminus\!o):=\bigl\{V\in ASP(O\!\setminus\!o)\colon 
V(y)=-K_{d-2}(o,y) +O(1) \text{ for $o\neq y\to o$}
\bigr\}.
\end{equation} 

\end{example}
\begin{example}[{\rm \cite[3]{Gamelin},  \cite{Anderson}, \cite{Kha03}, 
\cite{MOS}, \cite[Definition 8]{Kha07}, \cite[IIIC]{Koosis96}, \cite{Kha12}, \cite{KhaTalKha15}, \cite{BaiTalKha16}}]\label{expJP}
The class $JP(O\!\setminus\!o):=\bigl(ASP(O\!\setminus\!o)\bigr)^+$  is the class of  {\it Jensen potentials  on $O$ with pole at\/} $o\in O$. 
Besides, we use a subclass  $JP^1(O\!\setminus\!o):=\bigl(ASP^1(O\!\setminus\!o)\bigr)^+$.
For $D\Subset O$,  
the {\it extended  Green's function  $g_D (\cdot, o)$  with pole at} 
$o\in D$ \cite[5.7.2--4]{HK}, \cite[Ch.~5, 2]{Helms},
\begin{equation}\label{g}
g_D(x,o)=\begin{cases}
g_D(x,o) &\text{ if $x\in D\!\setminus\!o$}\\ 
0 &\text{ if $x\in \RR_{\infty}^d\!\setminus\!\clos D$}\\
\limsup\limits_{D\ni x'\to x} g_D(x',o) &\text{ if $x\in \partial D$} 
\end{cases}
\quad\in \sbh^+(\RR_{\infty}^d\!\setminus\!o) ,
\end{equation}
belongs to  $JP^1(O\!\setminus\!o)$.
\end{example}

Let $Q\subset  \RR_{\infty}^d$. 
The class $C^{\infty}(Q)$  consists of the restrictions to $Q$ of  all \textit{infinitely differentiable function\/} on open sets $O\subset \RR_{\infty}^d$ that include $Q$, 
but $C(Q)$ is the vector space over $\RR$ of all continuous functions on $Q$. 
For $Q\in \mathcal B(\RR_{\infty}^d)$, the class
$C^{\infty}(Q) \dd \lambda_d$  consists of  all {\it charges $\mu \in  \Meas (Q)$ 
with  an  infinitely differentiable density,\/} i.e.,  $\dd \mu=g \dd \lambda_d$, where $g\in C^{\infty}(Q)$.  
The {\it  Riesz measure of\/} $u\in \sbh_*(O)$  is a  measure 
\begin{equation}\label{df:cm}
\varDelta_u:= c_d {\bigtriangleup}  u\in \Meas^+( O), 
\quad 
c_d:=\frac{1}{s_{d-1}\max \{1, d-2\bigr\}}, 
\end{equation}
where ${\bigtriangleup}$  is  the {\it Laplace operator\/}  acting in the sense of the
distribution theory, or the theory of generalized functions. 
If $-\infty\in \sbh ( O)$  is the {\it minus-infinity function\/} 
on $O$, then $\varDelta_{-\infty}(S):=+\infty$ for each  $S\subset 
  O$.

\begin{theorem}[{\rm subharmonic version}]\label{crit1}
Let  $M\in \sbh (D)\cap C(D)$ be a function with  Riesz measure $\varDelta_M$, let $u\in \sbh_*(D)$ be a function  with  Riesz measure $\varDelta_u$, and let  the boundary $\partial D$ be non-polar. Then  the following nine statements are equivalent:

\begin{enumerate}[{\rm [{\bf s}1]}]
\item\label{{s}1} $u\prec_{\sbh_*(D)} M$, i.e., there is $h\in \text{\rm sbh}_* (D)$ such that $u+h\leq M$ on $D$.
\item\label{{s}1J} $u\curlyeqprec_{J_o(D)}M$, i.e., $M$ is an affine $J_o(D)$-balayage of $u$\/ {\rm (see \eqref{MQ})}.
\item\label{{s}1Jinfty}  There is  a non-empty subset $S_o\Subset D$  such that  the function  $M$ is an affine $\Bigl(J_o(D)\bigcap  \Meas^{+1}_{\comp}(D\!\setminus\!S_o)\bigcap \bigl(C^{\infty}(D)\dd \lambda_d\bigr)\Bigr)$-balayage of $u$.
\item\label{{s}1Om} There is  a subset  $S_o\Subset D$  such that 
$u\curlyeqprec_{\varOmega_o(D\!\setminus\!S_o)}M$.
\item\label{{s}3G} There is a subset $S_o\Subset D$ such that
$\varDelta_u\curlyeqprec_{\mathcal G_o(D\setminus S_o)}\varDelta_M$\/ {\rm (see \eqref{SQ})}. 
\item\label{{s}1P} $\varDelta_u\curlyeqprec_{JP(D\!\setminus\!o)}\varDelta_M$, i.e., $\varDelta_M$ is an affine $JP(D\!\setminus\!o)$-balayage of $\varDelta_u$.
\item\label{{s}2} For each $S_o$  satisfying 
\begin{equation}\label{S0seto}
o \in \Int S_o\subset S_o\Subset D\subset \RR^d, \quad\text{and\/ } b_+\in \RR^+\!\setminus\!0,
 \end{equation}
$\varDelta_M $ is an affine $\sbh_0^{+\uparrow}(D\!\setminus\!S_o; \leq b_+)$-balayage of $\varDelta_u$.

\item\label{{s}3} There are  a subset $S_o\Subset D$ and  a number $b_+>0$ as in \eqref{S0seto} such that   $\varDelta_M $ is an affine $\Bigl(\sbh_{00}^+(D\!\setminus\!S_o;\leq b_+)\bigcap  C^{\infty}(D\!\setminus\!S_o)\Bigr)$-balayage of 
$\varDelta_u$.

\item\label{s9} There is  a non-empty subdomain $D_o\Subset D$ 
containing $o\in D_o$ such that   $\varDelta_M $ is an affine $\Bigl(JP^1(D\!\setminus\!o)\bigcap \har (D_o\!\setminus\!o) \bigcap C^{\infty} (D\!\setminus\!o)\Bigr)$-balayage of $\varDelta_u$.
\end{enumerate}
\end{theorem}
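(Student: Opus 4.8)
The plan is to prove the chain of implications in a cyclic (or near-cyclic) manner, grouping the nine statements by the type of object they concern: the "function" statements [{\bf s}1], [{\bf s}1J], [{\bf s}1Jinfty], [{\bf s}1Om]; the "measure via test functions on $D\!\setminus\!S_o$" statements [{\bf s}3G], [{\bf s}2], [{\bf s}3], [s9]; and the "measure via potentials with pole at $o$" statement [{\bf s}1P]. First I would establish the duality bridge $[{\bf s}1]\Leftrightarrow[{\bf s}1J]$: the implication $\Rightarrow$ is immediate by integrating $u+h\le M$ against any $\mu\in J_o(D)$ and using $\int h\dd\mu\ge h(o)$ (Jensen's inequality, since $\mu$ is a Jensen measure and $h\in\sbh_*(D)$), with $c:=-h(o)$; the converse $\Leftarrow$ is the substantial half, and I would obtain it from a Hahn--Banach / Edwards-type separation theorem realizing the balayage relation $u\curlyeqprec_{J_o(D)}M$ as domination by an upper envelope over Jensen measures, which is subharmonic and serves (after subtracting a constant) as the function $h$; alternatively one invokes the Poisson--Jensen machinery announced in the paper. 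The equivalences $[{\bf s}1J]\Leftrightarrow[{\bf s}1Om]$ and the reduction to compactly supported smooth Jensen measures $[{\bf s}1Jinfty]$ follow by approximating arbitrary Jensen measures by harmonic measures $\omega_{D'}(o,\cdot)$ of regular exhausting subdomains $D'$ with $S_o\Subset D'\Subset D$ (sweeping out to the boundary) and then mollifying; the continuity of $M$ and local integrability of $u$ make these limit passages legitimate.

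Next I would transfer everything to the level of Riesz measures. The key identity is the Riesz decomposition / Poisson--Jensen formula: for a test function $v\in\sbh_0(D\!\setminus\!S_o)$ vanishing on $\partial D$, integration by parts (Green's formula in the distributional sense) gives a relation of the shape $\int v\dd\varDelta_u-\int v\dd\varDelta_M = (\text{boundary and }S_o\text{ terms})+(\text{mass of }u,M\text{ against a Jensen-type measure})$, so that the affine balayage $\varDelta_u\curlyeqprec_{\mathcal S(D\setminus S_o)}\varDelta_M$ for the relevant class $\mathcal S$ is equivalent to $u\curlyeqprec M$ against the associated Jensen measures. Concretely: $g_{D'}(\cdot,o)$ pairs with $\varDelta_u,\varDelta_M$ to recover $\int u\dd\omega_{D'}(o,\cdot)$ up to $u(o)$ and controlled terms, which yields $[{\bf s}1Om]\Leftrightarrow[{\bf s}3G]$; the monotone-limit structure from Proposition \ref{pr:up} (the class $\sbh_0^{+\uparrow}$ is stable under the $\max$-operation and countable increasing limits, $(F^\uparrow)^\uparrow=F^\uparrow$) lets one pass between $\mathcal G_o$, the full cone $\sbh_0^{+\uparrow}(D\!\setminus\!S_o;\le b_+)$, and its smooth compactly-supported-in-$D\!\setminus\!S_o$ subcone, giving $[{\bf s}3G]\Leftrightarrow[{\bf s}2]\Leftrightarrow[{\bf s}3]$. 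For $[{\bf s}1P]$ and $[s9]$ one argues that a Jensen potential $V\in JP(D\!\setminus\!o)$ is, modulo the singularity $-K_{d-2}(o,\cdot)$ at $o$, a member of $\sbh_0^{+\uparrow}$ on $D\!\setminus\!S_o$ for suitable $S_o$, and that the singular part contributes exactly the normalizing constant; conversely every relevant test function extends (by adding a multiple of $g_{D_o}(\cdot,o)$ or a fundamental solution) to a Jensen potential, which handles the harmonic-near-$o$ normalization $JP^1\cap\har(D_o\!\setminus\!o)$ in $[s9]$.

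The main obstacle I anticipate is the genuinely functional-analytic step $[{\bf s}1J]\Rightarrow[{\bf s}1]$ (equivalently, constructing the harmonic/subharmonic function $h$ from the integral inequalities): one must show that the pointwise upper envelope $\widetilde u(x):=\sup\{\int u\dd\mu-c : \mu\in J_x(D)\text{ ``centered'' at }x\}$ — or more precisely the least subharmonic majorant compatible with all the constraints — is itself subharmonic, $\not\equiv-\infty$ (this uses non-polarity of $\partial D$ to guarantee enough Jensen measures and to keep the envelope finite somewhere), and satisfies $\widetilde u\le M$, so that $h:=M-\widetilde u$ (or rather $\widetilde u - u$ suitably) does the job. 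This requires care with the usc regularization and with the constant $c$, and is exactly where the hypothesis $M\in C(D)$ is used to ensure the envelope is dominated by $M$ everywhere rather than merely a.e. A secondary technical point is the uniform control of the additive constants across the various monotone and mollification limits, so that an affine balayage relation is preserved (not just a linear one degenerating); Proposition \ref{pr:up} and the explicit local bounds near $S_o$ and near $o$ (the $O(1)$ terms in \eqref{ASpc+}, \eqref{ASP1}) are the tools that keep these constants bounded. Once the bridge and the Riesz-level identities are in place, the remaining implications are routine verifications of inclusions between the test classes together with the approximation lemmas already summarized above.
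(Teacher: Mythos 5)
Your proposal gets the architecture right (duality between the function-level statements and the measure-level ones, Poisson--Jensen as the bridge, a Hahn--Banach-type envelope argument for reconstructing $h$), but the two steps you dismiss as approximation/limit arguments are precisely where the paper has to work hardest, and as stated both would fail. First, you propose to close {\rm [s4]}$\Rightarrow${\rm [s2]} (harmonic measures back to all Jensen measures) ``by approximating arbitrary Jensen measures by harmonic measures $\omega_{D'}(o,\cdot)$ of regular exhausting subdomains.'' This is not available: $J_o(D)$ is strictly larger than the weak-$*$ closed convex hull of $\{\omega_{D'}(o,\cdot)\colon S_o\Subset D'\Subset D\}$ (already the normalized volume measure on a small ball around $o$ lies outside that hull, since all the $\omega_{D'}(o,\cdot)$ are supported off $S_o$). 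The paper does not attempt this; it closes the loop by proving {\rm [s4]}$\Rightarrow${\rm [s1]} directly (the cited results of \cite{Kha07}, \cite{KhaKha19}, which construct the subharmonic minorant from the harmonic-measure inequalities) and obtains {\rm [s2]} separately from {\rm [s1]}.

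Second, your passage from the Green's-function class to the full test cone, ``{\rm [s5]}$\Leftrightarrow${\rm [s7]}$\Leftrightarrow${\rm [s8]} via the monotone-limit structure of Proposition \ref{pr:up},'' has the containments backwards: a general $v\in\sbh_0^{+\uparrow}(D\!\setminus\!S_o;\leq b_+)$ is in no sense an increasing limit of multiples of extended Green's functions, so the inequality for $\mathcal G_o(D\!\setminus\!S_o)$ does not propagate to the whole cone by limits. The paper's actual mechanism (Secs.~\ref{GT}--\ref{AStf}) runs in the opposite direction: given a test function $v$, one \emph{glues} it on an annulus around $S_o$ with an affine function of a Green's function (Gluing Theorems \ref{gl:th3}--\ref{glth5}), producing $V\geq v$ on $D\!\setminus\!S_o$ that is harmonic near $o$ with the normalized singularity $-K_{d-2}(\cdot,o)$, and then truncates to exhibit $V$ as an increasing limit of Jensen potentials (Theorem \ref{pr:13}); the hypothesis {\rm [s6]} applied to those potentials then yields the inequality for $v$ with a constant controlled by the two-sided bounds $b_\pm$, $r$ built into the test classes. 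Your sketch for the converse embedding (``every test function extends to a Jensen potential by adding a multiple of $g_{D_o}(\cdot,o)$'') ignores that $v$ is not subharmonic across $\partial S_o$ -- this is exactly what the gluing construction and the lower bound on potentials (Proposition \ref{pt_below}, Lemma \ref{lembl}) are needed for, and it is also the only place non-polarity of $\partial D$ enters. Without these constructions the chain {\rm [s6]}$\Rightarrow${\rm [s7]} and {\rm [s8]}$\Rightarrow${\rm [s9]}$\Rightarrow${\rm [s3]} is not established.
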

\begin{comments}[{\rm to Theorem \ref{crit1}}]\label{rem:6} 
{\rm The equivalence  [s\ref{{s}1}]$\Leftrightarrow$[s\ref{{s}1J}]  
is proved in \cite[The\-o\r-em 7.2]{Kha01II}.
The inclusion  $\varOmega_o(D\!\setminus\! S_o)\subset  J_o(D)$ gives 
the implications [s\ref{{s}1J}]$\Rightarrow$[s\ref{{s}1Om}],   
and the inclusion $J_o(D)\bigcap  \Meas^{+1}_{\comp}(D\!\setminus\!S_o)\bigcap \bigl(C^{\infty}(D)\dd \lambda_d\bigr)\subset J_o(D)$ gives the 
implication  [s\ref{{s}1J}]$\Rightarrow$[s\ref{{s}1Jinfty}].
The implication [s\ref{{s}1Om}]$\Rightarrow$[s\ref{{s}1}] 
for $d=2$ is proved partially in \cite[Main Theorem]{Kha07}
 and more generally in \cite[Theorem 4]{KhaKha19}. The combination of these proofs is transferred almost verbatim to the cases $d=1$ and $d>2$. We omit this transfer.
The  equivalence [s\ref{{s}1Om}]$\Leftrightarrow$[s\ref{{s}3G}] 
follows easily from the classical Poisson\,--\,Jensen formula 
\cite[4.5]{R}, \cite[3.7]{HK}. 
The implication [s\ref{{s}1P}]$\Rightarrow$[s\ref{{s}3G}]
is obvious, since $\mathcal G_o(D\!\setminus\!S_o)\subset JP(D\!\setminus\!o)$. 
The implication [s\ref{{s}1J}]$\Rightarrow$[s\ref{{s}1P}] is a special case of 
\cite[Theorem 6]{Kha07}. 
The implication [s\ref{{s}2}]$\Rightarrow$[s\ref{{s}3}]
follows from  inclusions \eqref{incv} of Proposition \ref{pr:12} below.
Thus, if we prove the implication [s\ref{{s}1P}]$\Rightarrow$[s\ref{{s}2}] 
(see Sec. \ref{AStf})
and  the chain of implications  [s\ref{{s}3}]$\Rightarrow$[s\ref{s9}]$\Rightarrow$[s\ref{{s}1Jinfty}]$\Rightarrow$[s\ref{{s}1}] (see Sec.~\ref{ASJem}--\ref{proofs31}), then Theorem \ref{crit1} will be completely proved.}
\end{comments}

\begin{remark}\label{rems1} Only the proof of implication [s\ref{{s}3}]$\Rightarrow$[s\ref{s9}] uses that the boundary $\partial D$ is \textit{non-polar.\/}  Thus, all other implications explicitly written above in the Comments to Theorem \ref{crit1} are true \textit{for any domain }$D\subset \RR^d$.
\end{remark}

\section{Main result for the harmonic version}\label{Sshv}
\setcounter{equation}{0}

``Subharmonic'' Theorem  \ref{crit1}  has a  ``harmonic'' counterpart. 
We need some additional definitions and notations.
Denote by 
$B(x,r):=\bigl\{x'\in \RR^d \colon |x'-x|<r\bigr\}\subset \RR^d$
an \textit{open ball centered at $x\in \RR^d$ with radius $r\in \RR^+$},  
$ \BB:= B(0,1)$;
\begin{equation*}
Q^{\cup r}:=\bigcup \bigl\{B(x,r)\colon x\in Q\bigr\}
\end{equation*}
is the {\it outer $r$-parallel open set for\/} $Q\subset \RR^d$ \/ {\rm \cite[Ch.~I,\S~4]{Santalo}}. 
\begin{proposition}\label{llemmaD} 
Let a subset $S\Subset \RR^d$  be connected, and $r\in \RR^+\!\setminus\!0$. 
Then $S^{\cup r}$ is a domain. If $r'\in \RR^+$ and $r'>r$, then there is a  regular (for the Dirichlet problem) domain $D_r'\Subset \RR^d$
{\rm \cite[4.1]{R}, \cite{HK}} such that
\begin{equation}\label{SDS}
S^{\cup r}\Subset D'_r\Subset S^{\cup r'}. 
\end{equation}
\end{proposition}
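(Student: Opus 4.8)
The plan is to treat the two assertions of Proposition \ref{llemmaD} separately: that $S^{\cup r}$ is a domain is soft point-set topology, while the existence of an interposed \emph{regular} domain is a standard mollification-and-Sard argument whose only delicate point is reconciling connectedness with smoothness.

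\emph{$S^{\cup r}$ is a domain.} It is open, being a union of open balls, so only connectedness is at issue. I would rewrite
$S^{\cup r}=\bigl\{y\in\RR^d\colon\dist(y,S)<r\bigr\}=\bigl\{y\in\RR^d\colon\dist(y,\clos S)<r\bigr\}$
and observe that this set is exactly the image of $\clos S\times\BB$ under the continuous map $(x,v)\mapsto x+rv$: if $\dist(y,\clos S)<r$ then some $x\in\clos S$ has $|y-x|<r$, whence $y=x+r\cdot\frac{y-x}{r}$ with $\frac{y-x}{r}\in\BB$; conversely $\dist(x+rv,\clos S)\le r|v|<r$ for $v\in\BB$. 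Since $S$ is connected so is $\clos S$, hence $\clos S\times\BB$ is connected and so is its continuous image $S^{\cup r}$; moreover $S^{\cup r}\ne\varnothing$. Thus $S^{\cup r}$ is a domain.

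\emph{Construction of $D'_r$.} Fix $\rho_1,\rho_2$ with $r<\rho_1<\rho_2<r'$. Put $\varphi:=\dist(\cdot,\clos S)$; this is a $1$-Lipschitz function on $\RR^d$ with $S^{\cup t}=\{\varphi<t\}$ for every $t>0$ and with $\varphi(y)\to+\infty$ as $|y|\to\infty$ (as $\clos S$ is bounded). Let $\psi:=\varphi*\eta_\varepsilon\in C^{\infty}(\RR^d)$ be a mollification, $\eta_\varepsilon\ge0$, $\supp\eta_\varepsilon\subset B(0,\varepsilon)$, $\int\eta_\varepsilon\dd\lambda_d=1$, with $\varepsilon>0$ so small that $\varepsilon<\min\{\rho_1-r,\ r'-\rho_2\}$; then $\|\psi-\varphi\|_{\infty}\le\varepsilon$ because $\varphi$ is $1$-Lipschitz. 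By Sard's theorem choose a regular value $c\in(\rho_1,\rho_2)$ of $\psi$, and set $G:=\{y\colon\psi(y)<c\}$. Then $G$ is open, bounded (since $\psi\ge\varphi-\varepsilon$ and $\varphi(y)\to\infty$), and $\partial G\subset\{\psi=c\}$ with $\{\psi=c\}$ a compact $C^{\infty}$ hypersurface. On $K:=\clos(S^{\cup r})\subset\{\varphi\le r\}$ one has $\psi\le r+\varepsilon<\rho_1<c$, so $K\subset G$; and on $\clos G\subset\{\psi\le c\}$ one has $\varphi\le c+\varepsilon<\rho_2+\varepsilon<r'$, so $\clos G\subset\{\varphi<r'\}=S^{\cup r'}$. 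Now $K$ is compact and connected, being the closure of the connected set $S^{\cup r}$; hence it lies in a single connected component of $G$, which I take to be $D'_r$. Then $D'_r$ is a nonempty bounded domain with $\clos(S^{\cup r})=K\subset D'_r$ and $\clos D'_r\subset\clos G\subset S^{\cup r'}$, i.e.\ $S^{\cup r}\Subset D'_r\Subset S^{\cup r'}$, and $D'_r\Subset\RR^d$ since $\clos D'_r$ is compact.

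\emph{Regularity.} Finally I would verify that $D'_r$ is regular for the Dirichlet problem by showing that $\partial D'_r$ is $C^{\infty}$. As a connected component of the open set $G$, $D'_r$ is open with $\partial D'_r\subset\partial G\subset\{\psi=c\}$; since $c$ is a regular value of $\psi$, the implicit function theorem provides, around each $x\in\partial D'_r$, a neighbourhood $U$ in which $\{\psi=c\}$ is a $C^{\infty}$ hypersurface with $\{\psi<c\}\cap U$ connected and lying on one side of it, and a short argument with the components of $G$ shows $D'_r\cap U=\{\psi<c\}\cap U$. Hence $\partial D'_r$ is a $C^{\infty}$ hypersurface, so $D'_r$ satisfies an exterior ball (a fortiori exterior cone) condition at every boundary point and is therefore regular \cite[4.1]{R}, \cite{HK}. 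I expect the one genuine obstacle to be exactly this last step: the smooth sublevel set $G$ furnished by Sard's theorem may be disconnected, and passing to the right component is legitimate only because $\clos(S^{\cup r})$ is connected by the first part and because a connected component of an open set inherits the smoothness of its boundary from the whole set.
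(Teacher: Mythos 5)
Your proof is correct. Note that the paper itself offers no argument for Proposition \ref{llemmaD}: by the blanket convention stated after Proposition \ref{pr:up} (``if some proposition is not proved or not commented, then this proposition is obvious''), it is left as a standard fact, so there is no paper proof to compare against. Your two steps are both sound: the identification $S^{\cup r}=\{\dist(\cdot,\clos S)<r\}$ as the continuous image of the connected set $\clos S\times\BB$ settles connectedness, and the mollification of the $1$-Lipschitz function $\varphi=\dist(\cdot,\clos S)$ combined with Sard's theorem produces a smooth sublevel set squeezed between $S^{\cup r}$ and $S^{\cup r'}$; passing to the component of $G$ containing the compact connected set $\clos(S^{\cup r})$, and checking locally that $D'_r\cap U=\{\psi<c\}\cap U$ near each boundary point (so that the component inherits the $C^{\infty}$ boundary and hence the exterior-ball, a fortiori exterior-cone, condition), closes the argument. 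The only implicit hypothesis is $S\neq\varnothing$, which is forced by the paper's use of $S\supset S_o\ni o$ and by its convention that domains are non-empty.
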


We supplement the classes \eqref{Ob} of subharmonic functions from Sec.~\ref{Sssh} with subclasses that are {\it positive near the boundary $\partial D$:}
 \begin{equation}\label{s0}
\begin{split}
\sbh_+(D\!\setminus\!S_o):=&\bigl\{ v\in \sbh (D\!\setminus\!S_o)
\colon   0\leq v \text{ \it  on $D\!\setminus\!S(v)$
for some $S(v)\Subset D$}\bigr\},
\\
\sbh_{+0}(D\!\setminus\!S_o)&:=
\sbh_0(D\!\setminus\!S_o)\bigcap \sbh_{+}(D\!\setminus\!S_o)\overset{\eqref{{s0}00}}{\supset} 
\sbh_{00}(D\!\setminus\!S_o),\\
\sbh_{+0}(D\!\setminus\!S_o; \leq b_+)&:=
\sbh_{+0}(D\!\setminus\!S_o)\bigcap 
\sbh(D\!\setminus\!S_o; \leq b_+).
\end{split}
\end{equation}
We define the average of  $v\in L^1\bigl(\partial B(x,r)\bigr)$ on  a sphere $\partial B(x,r)$  as
\begin{equation*}
v^{\circ r}(x):=\frac{1}{s_{d-1}}\int_{\partial \BB} v (x+rs)\dd \sigma_{d-1}(s).
\end{equation*} 
For given constants 
\begin{equation}\label{bbpmr}
-\infty <b_-< 0 < b_+<+\infty, \quad 0<4r<\dist(S_o, \partial D),
\end{equation}
using \eqref{sbh0}, we define  the following classes of \textit{test 
 functions} with some  restrictions from above and from below:
\begin{equation*}
\begin{split}
\sbh_{+0}(D\!\setminus\!S_o; r,b_-< b_+)&:=
 \bigl\{v\in \sbh_{+0}(D\!\setminus\!S_o;\leq b_+)\colon  b_-\leq v \text{ on } S_o^{\cup (4r)}\!\setminus\!S_o\bigr\},
\\
\sbh_{+0}^{\uparrow}(D\!\setminus\!S_o;  r, b_-< b_+)
&:=\Bigl(\sbh_{+0}(D\!\setminus\!S_o;  r, b_-< b_+)\Bigr)^{\uparrow},
\\
\sbh_{00}(D\!\setminus\!S_o; r,b_-< b_+)&:=
 \bigl\{v\in \sbh_{00}(D\!\setminus\!S_o; \leq b_+)\colon  b_-\leq v \text{ on } S_o^{\cup (4r)}\!\setminus\!S_o\bigr\},
\\
\sbh_{+0}(D\!\setminus\!S_o; \circ r,b_-< b_+)&:=\bigl\{v\in \sbh_{+0}(D\!\setminus\!S_o; \leq b_+)\colon  b_-\leq v^{\circ r}
 \text{ on } S_o^{\cup (3r)}\!\setminus\!S_o^{\cup (2r)}\bigr\}\\
\sbh_{+0}^{\uparrow}(D\!\setminus\!S_o;\circ r, b_-< b_+)&:=\Bigl(\sbh_{+0}(D\!\setminus\!S_o;\circ r, b_-< b_+)\Bigr)^{\uparrow}.
\end{split}
\end{equation*}
 
\begin{proposition}\label{pr:12} We have the following  inclusions:
\begin{equation}\label{incv}
\begin{array}{ccccccc}
&{\sbh_{00}^+(D\!\setminus\!S_o,\leq b_+)}& {\subset} 
&{\sbh_{00}(D\!\setminus\!S_o; r,b_-< b_+)}&\\
&\cap& &\cap&\\
&{\sbh_{0}^+(D\!\setminus\!S_o,\leq b_+)}&\subset &{\sbh_{+0}(D\!\setminus\!S_o;  r,b_-< b_+)}&\\
&\cap& &\cap&\\
&{\sbh_{0}^{+\uparrow}(D\!\setminus\!S_o,\leq b_+)}&\subset &{\sbh_{+0}^{\uparrow}(D\!\setminus\!S_o; \circ r,b_-< b_+)}&
\end{array} 
\end{equation}
In\/ \eqref{incv},  generally speaking, all inclusions are strict.
\end{proposition}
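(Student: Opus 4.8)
The plan is to verify the seven inclusions displayed in \eqref{incv} one at a time --- they all reduce to the definitions together with three elementary observations --- and then to exhibit, for a suitable configuration of $D,S_o,r,b_\pm$, test functions showing that each inclusion can be strict.

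I first record the observations. \emph{(i)} By \eqref{{s0}00} every $v\in\sbh_{00}(D\!\setminus\!S_o)$ lies in $\sbh_0(D\!\setminus\!S_o)$ and satisfies $v\equiv 0$, hence $0\le v$, on $D\!\setminus\!S(v)$ for some $S(v)\Subset D$; thus $\sbh_{00}(D\!\setminus\!S_o)\subset\sbh_0(D\!\setminus\!S_o)\cap\sbh_+(D\!\setminus\!S_o)=\sbh_{+0}(D\!\setminus\!S_o)$. \emph{(ii)} If $v\in\sbh(D\!\setminus\!S_o)$ with $v\ge 0$ on $D\!\setminus\!S_o$, then $v\in\sbh_+(D\!\setminus\!S_o)$ (take $S(v):=S_o$), and, since $b_-<0$ by \eqref{bbpmr}, both $b_-\le v$ on $S_o^{\cup(4r)}\!\setminus\!S_o$ and $b_-\le v^{\circ r}$ on $S_o^{\cup(3r)}\!\setminus\!S_o^{\cup(2r)}$ hold automatically. \emph{(iii)} Each base class in \eqref{incv} is closed under $\max$ (the maximum of two subharmonic functions is subharmonic, and nonnegativity, the bound $\le b_+$, convergence to $0$ at $\partial D$ and the lower bounds on the two shells are manifestly $\max$-stable); hence, by Proposition \ref{pr:up}, $F\subset F^{\uparrow}$ (via the constant sequence $f_j\equiv f$, legitimate since every $f$ here is defined throughout $D\!\setminus\!S_o$), and $(\cdot)^{\uparrow}$ is monotone, i.e. $F\subset G$ implies $F^{\uparrow}\subset G^{\uparrow}$. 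Now \emph{(i)}, after intersecting with the side conditions defining the classes, yields the two uppermost vertical inclusions $\sbh_{00}^+(D\!\setminus\!S_o;\le b_+)\subset\sbh_0^+(D\!\setminus\!S_o;\le b_+)$ and $\sbh_{00}(D\!\setminus\!S_o;r,b_-<b_+)\subset\sbh_{+0}(D\!\setminus\!S_o;r,b_-<b_+)$; \emph{(ii)} (with $S(v):=S_o$) yields the two top horizontal inclusions and the membership $\sbh_0^+(D\!\setminus\!S_o;\le b_+)\subset\sbh_{+0}(D\!\setminus\!S_o;\circ r,b_-<b_+)$; and \emph{(iii)} yields the lower-left vertical inclusion $\sbh_0^+(D\!\setminus\!S_o;\le b_+)\subset\sbh_0^{+\uparrow}(D\!\setminus\!S_o;\le b_+)$ and, on applying the monotone operation $(\cdot)^{\uparrow}$ to the last membership, the bottom horizontal inclusion.

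The one step with genuine geometric content --- and the (modest) main obstacle --- is the passage from a pointwise lower bound on the shell $S_o^{\cup(4r)}\!\setminus\!S_o$ to a lower bound for spherical means on the thinner shell $S_o^{\cup(3r)}\!\setminus\!S_o^{\cup(2r)}$: if $v\in\sbh(D\!\setminus\!S_o)$ and $b_-\le v$ on $S_o^{\cup(4r)}\!\setminus\!S_o$, then $b_-\le v^{\circ r}$ on $S_o^{\cup(3r)}\!\setminus\!S_o^{\cup(2r)}$. Indeed, for $x$ in the latter set one has $2r\le\dist(x,S_o)<3r$, so the $1$-Lipschitz property of $\dist(\cdot,S_o)$ gives $r\le\dist(x+rs,S_o)<4r$ for every $s\in\partial\BB$; since $4r<\dist(S_o,\partial D)$ by \eqref{bbpmr}, this puts every point $x+rs$ into $S_o^{\cup(4r)}\!\setminus\!S_o\subset D\!\setminus\!S_o$, so $v^{\circ r}(x)$ is a bona fide average of values of $v$, each $\ge b_-$, whence $v^{\circ r}(x)\ge b_-$. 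This gives $\sbh_{+0}(D\!\setminus\!S_o;r,b_-<b_+)\subset\sbh_{+0}(D\!\setminus\!S_o;\circ r,b_-<b_+)$, and composing with $F\subset F^{\uparrow}$ yields the remaining, lower-right vertical inclusion $\sbh_{+0}(D\!\setminus\!S_o;r,b_-<b_+)\subset\sbh_{+0}^{\uparrow}(D\!\setminus\!S_o;\circ r,b_-<b_+)$. This exhausts \eqref{incv}.

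Finally, strictness holds for suitable configurations (and fails only in degenerate ones, e.g. when $S_o$ is a single point with $d\ge2$, which by removability of bounded subharmonic singularities forces $\sbh_0^+(D\!\setminus\!S_o;\le b_+)=\{0\}$, so that $S_o$ must be taken with non-polar interior). Taking $d\ge2$, $D=\BB$, $S_o=\clos(\tfrac14\BB)$, $0<4r<\tfrac34$, $b_-<0<b_+$ with $b_+$ not too small: a nonnegative radial harmonic function on the annulus vanishing on $\partial\BB$ but nowhere equal to $0$ near $\partial\BB$ separates the $\sbh_{00}$-classes from the $\sbh_0$/$\sbh_{+0}$-classes at the top two levels; a countable sum of small Jensen potentials of a fixed height $<b_+$ with poles accumulating at a point of $\partial\BB$ is a pointwise increasing limit of functions of $\sbh_{00}^+(D\!\setminus\!S_o;\le b_+)$ that does not tend to $0$ at that point, hence separates the $(\cdot)^{\uparrow}$-classes from the non-$\uparrow$ ones; and a subharmonic function that vanishes near $\partial D$, ranges in $[b_-,b_+]$, and yet takes a negative value (obtained by solving a Dirichlet problem on an inner sub-annulus $\{\tfrac14<|x|<\tfrac34\}$ with sign-changing boundary data on $\{|x|=\tfrac14\}$, extended by $0$, and a Martin-kernel variant at an inner boundary point for the $\circ r$-level) separates the nonnegative classes from the others. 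The remaining verifications are routine and are omitted here.
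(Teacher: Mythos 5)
Your verification of the seven inclusions in \eqref{incv} is correct and, in fact, supplies exactly the details the paper suppresses behind ``immediately follow from definitions'': the reduction to the three observations is sound, and the only step with content --- that a pointwise bound $b_-\leq v$ on $S_o^{\cup(4r)}\!\setminus\!S_o$ forces $b_-\leq v^{\circ r}$ on $S_o^{\cup(3r)}\!\setminus\!S_o^{\cup(2r)}$ --- is handled properly via the $1$-Lipschitz estimate $r\leqslant\dist(x+rs,S_o)<4r$, which places every sphere $\partial B(x,r)$ inside $S_o^{\cup(4r)}\!\setminus\!S_o\subset D\!\setminus\!S_o$. This part of your argument I accept without reservation.

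The gap is in the strictness discussion, specifically in your construction separating the left column from the right one (i.e.\ producing a sign-changing member of $\sbh_{00}(D\!\setminus\!S_o;r,b_-<b_+)$). You propose to solve a Dirichlet problem on an inner sub-annulus with sign-changing data on the inner circle, zero data on the outer circle, and then ``extend by $0$''. That extension is not subharmonic: if $h$ denotes the Dirichlet solution, Gluing Theorem \ref{gl:th1} produces $\max\{h,0\}$ on the sub-annulus, which is nonnegative and so proves nothing; while the literal piecewise function ($h$ inside, $0$ outside) fails either upper semicontinuity or the sub-mean-value inequality at interface points near which $h$ takes, respectively, positive or negative values. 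This is not a removable slip: \emph{any} attempt to glue a somewhere-negative subharmonic function against the identically-zero function across an interior interface runs into the same obstruction, which is precisely why sign-changing elements of $\sbh_{00}$ are not trivial to exhibit. They do exist --- e.g.\ for $d=2$, $D=\BB$, $S_o=\clos(\tfrac14\BB)$, take $\mu$ to be the measure on the circle $\{|x|=t\}$ with density $1+\cos\theta$ and set $v:=\pt_{\mu}-\widehat{\pt_{\mu}}$, where $\widehat{\pt_{\mu}}$ is the harmonic function on the annulus agreeing with $\pt_{\mu}$ on $\{t<|x|<1\}$; then $v\equiv 0$ for $|x|\geqslant t$, and a direct Laurent computation gives $v(-\rho)=c\bigl(\ln(t/\rho)-\tfrac12(t/\rho-\rho/t)\bigr)<0$ for $\tfrac14<\rho<t$ --- and this is essentially what the examples of \cite[XI B2]{Koosis}, \cite[Example]{MenKha19} cited by the paper provide. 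Your remaining separating examples (the harmonic ``$|x|^{2-d}-1$''-type function for the first vertical line, and the increasing sum of disjoint nonnegative bumps accumulating at a boundary point for the $(\cdot)^{\uparrow}$-lines) are workable in outline, and the latter is even more flexible than the paper's argument, which invokes irregular boundary points and the behaviour of $g_D$ there; but as written they are sketches, and the horizontal-strictness construction must be replaced.
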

\begin{proof} Inclusions \eqref{incv} immediately follow from definitions. 
Examples from \cite[XI B2]{Koosis}, \cite[Example]{MenKha19}  show that all ``horizontal'' inclusions are strict. The first line of  ``vertical'' inclusions is strict in an obvious way. 
The second line of ``vertical'' inclusions is strict in the case when there are irregular points on the boundary $\partial D$ of the domain $D$ \cite[Lemma 5.6]{HK},  
since the limit values of the Green's function $g_D$ at such points are not zero, even if these limit values  exist \cite[Theorem 5.19]{HK}.
\end{proof}

\begin{theorem}[{\rm harmonic version}]\label{crit2}
If the conditions of Theorem\/ {\rm \ref{crit1}} are fulfilled,  then  the following eight 
 statements are equivalent:

\begin{enumerate}[{\rm [{\bf h}1]}]
\item\label{{h}1} $u\prec_{\har(D)} M$, i.e., there is  $h\in \har (D)$ such that
 $u+h\leq M$ on $D$. 
\item\label{{h}1J} $u\curlyeqprec_{AS_o(D)}M$, i.e., $M$ is an affine $AS_o(D)$-balayage of $u$\/ {\rm (see \eqref{MQ})}.
\item\label{{h}1Jinfty}  There is  a non-empty  set $S_o\Subset D$  such that   the function $M$ is an affine $\Bigl(AS_o(D)\bigcap  \Meas^{+1}_{\comp}(D\!\setminus\!S_o)\bigcap \bigl(C^{\infty}(D)\dd \lambda_d\bigr)\Bigr)$-balayage of 
$u$.
\item\label{{h}1P} $\varDelta_u\curlyeqprec_{ASP(D\!\setminus\!o)}\varDelta_M$, i.e., $\varDelta_M$ is an affine $ASP(D\!\setminus\!o)$-balayage of $\varDelta_u$\/ {\rm (see \eqref{SQ})}.  
\item\label{{h}2+} For each  connected set $S_o$  from \eqref{S0seto} and  for any constants $r, b_{\pm}$ from\/ \eqref{bbpmr}
there is a constant\/ $C\in\RR$ such that
\begin{equation}\label{almB}
\int_{D\!\setminus\!S_o}v \dd \varDelta_u\leqslant 
\int_{D\!\setminus\!S_o^{\cup(4r)}}v \dd \varDelta_M+C \quad
\text{ for each } v\in \sbh_{+0}^{\uparrow}(D\!\setminus\!S_o;\circ r, b_-< b_+).
\end{equation}
\item\label{{h}2} For each  connected set $S_o$  from \eqref{S0seto} and  for any constants  $r, b_{\pm}$
from\/ \eqref{bbpmr},   $\varDelta_M $ is an affine $\sbh_{+0}^{\uparrow}(D\!\setminus\!S_o;  r, b_-< b_+)$-balayage of  $\varDelta_u$.

\item\label{{h}3} There are  connected set $S_o$ as in \eqref{S0seto} and constants as in\/  \eqref{bbpmr}  such that   $\varDelta_M $ is an affine $\Bigl(\sbh_{00}(D\!\setminus\!S_o; r,b_-< b_+)\bigcap C^{\infty}(D\!\setminus\!S_o)\Bigr)$-balayage of $\varDelta_u$.

\item\label{h8} There is  a non-empty subdomain $D_o\Subset D$ 
containing $o\in D_o$ such that   $\varDelta_M $ is an affine $\Bigl(ASP^1(D\!\setminus\!o)\bigcap \har (D_o\!\setminus\!o) \bigcap C^{\infty} (D\!\setminus\!o)\Bigr)$-balayage of 
$\varDelta_u$.
\end{enumerate}
\end{theorem}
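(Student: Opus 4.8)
The plan is to prove Theorem~\ref{crit2} by reducing it to the already-established Theorem~\ref{crit1}, exploiting the fact that the harmonic version is the ``affine'' (constant-shifted) analogue of the subharmonic version, together with the direct relations between the classes of test objects in the two theorems. First I would record the elementary observation that $h\in\har(D)$ if and only if $h\in\sbh_*(D)$ and $-h\in\sbh_*(D)$, and that $AS_o(D)$ is exactly the set of $\mu\in\Meas_{\comp}^+(D)$ with $\int v\dd\mu\le v(o)$ for all $v\in\har(D)$, equivalently $\int v\dd\mu = v(o)$ for all $v\in\har(D)$ since $\har(D)$ is a vector space (so the constant $c$ in the affine balayage is forced to absorb only the normalization). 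The cycle of implications then mirrors the one in the Comments to Theorem~\ref{crit1}: [h\ref{{h}1}]$\Leftrightarrow$[h\ref{{h}1J}] is the harmonic Duality Theorem (cf. \cite[Theorem 7.2]{Kha01II} in the $\har$ setting, or one proves it directly by a Hahn--Banach/Edwards-type separation argument using that $\har(D)$ is a vector subspace and $M$ is continuous); the inclusions $J_o(D)\subset AS_o(D)$, $ASP(D\!\setminus\!o)\supset JP(D\!\setminus\!o)$, $\sbh_{00}\subset\sbh_{00}(\,\cdot\,;r,b_-<b_+)$, etc., give several implications ``for free''; and the genuinely new content is packaged into one hard implication, which I expect to be [h\ref{{h}1P}]$\Rightarrow$[h\ref{{h}2+}], proved in the analogue of Sec.~\ref{AStf}.

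Concretely, the implications I would organize as follows. The chain [h\ref{{h}1}]$\Rightarrow$[h\ref{{h}1J}]$\Rightarrow$[h\ref{{h}1Jinfty}] uses $AS_o(D)\cap\Meas_{\comp}^{+1}(D\!\setminus\!S_o)\cap(C^\infty(D)\dd\lambda_d)\subset AS_o(D)$, exactly as in the subharmonic case, but with a preliminary mollification/smoothing step: given $\mu\in AS_o(D)$, one sweeps it slightly inward off a small neighbourhood $S_o$ of $o$ and convolves with a smooth kernel, using that the class $AS_o(D)$ is stable under $\har(D)$-balayage (harmonic measures of regular subdomains) and under such smoothing, because the defining test functions are harmonic and the operations preserve the mean-value identities. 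The implication [h\ref{{h}1J}]$\Rightarrow$[h\ref{{h}1P}] is the harmonic analogue of \cite[Theorem 6]{Kha07}: pairing the balayage inequality $\int u\dd\mu\le\int M\dd\mu+c$ against Arens--Singer potentials $V\in ASP(D\!\setminus\!o)$ via the representation $V=\int g_{D'}(\cdot,y)\dd\nu(y)+\text{(harmonic correction)}$ and Fubini. The implication [h\ref{{h}2}]$\Rightarrow$[h\ref{{h}3}] is immediate from the inclusion $\sbh_{00}(D\!\setminus\!S_o;r,b_-<b_+)\cap C^\infty\subset\sbh_{+0}^{\uparrow}(D\!\setminus\!S_o;r,b_-<b_+)$ plus a smoothing argument; [h\ref{{h}2+}]$\Rightarrow$[h\ref{{h}2}] follows because every $v\in\sbh_{+0}(D\!\setminus\!S_o;r,b_-<b_+)$ has $v^{\circ r}\ge v$ hence belongs (after the obvious adjustment of the support region from $D\!\setminus\!S_o^{\cup(4r)}$ to $D\!\setminus\!S_o$, which costs a bounded error controlled by $b_+$ and the Riesz mass of $\varDelta_M$ on the compact shell $S_o^{\cup(4r)}\!\setminus\!S_o$) to the wider class; and the remaining implications [h\ref{{h}3}]$\Rightarrow$[h\ref{h8}]$\Rightarrow$[h\ref{{h}1Jinfty}]$\Rightarrow$[h\ref{{h}1}] are the verbatim harmonic transcriptions of [s\ref{{s}3}]$\Rightarrow$[s\ref{s9}]$\Rightarrow$[s\ref{{s}1Jinfty}]$\Rightarrow$[s\ref{{s}1}] from Sec.~\ref{ASJem}--\ref{proofs31}, where one simply replaces $JP^1$ by $ASP^1$, $J_o$ by $AS_o$, and drops the positivity constraint $(\,\cdot\,)^+$ on the test functions throughout; the non-polarity of $\partial D$ enters (as in Remark~\ref{rems1}) only in [h\ref{{h}3}]$\Rightarrow$[h\ref{h8}].

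The main obstacle is the implication [h\ref{{h}1P}]$\Rightarrow$[h\ref{{h}2+}], i.e., passing from a balayage inequality for \emph{potentials} (which vanish near $\partial D$ and have a logarithmic/Newtonian pole at $o$) to a balayage inequality for the much larger class of $\sbh_{+0}^{\uparrow}$ test functions that are only assumed positive near $\partial D$, bounded above by $b_+$, and bounded below on the shell $S_o^{\cup(4r)}\!\setminus\!S_o^{\cup(2r)}$ after spherical averaging. The key device, following Sec.~\ref{AStf}, is to show that any such $v$ can be \emph{majorized} by a bounded perturbation of an Arens--Singer potential: truncate $v$ by $\min\{v,b_+\}$, subtract a fixed harmonic function that kills the boundary behaviour, and add a multiple of $g_{D'}(\cdot,o)$ large enough near $o$ so that the difference is a nonnegative, compactly supported subharmonic function, i.e., lands (after reflection across the pole-normalization) in the span of $ASP(D\!\setminus\!o)$. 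One must track that the ``cost'' of all these corrections is a single constant depending only on $b_\pm,r,S_o,M,u$ — this is where the spherical-average lower bound on the shell and the $b_+$ upper bound are used to control the harmonic majorant via the maximum principle on $S_o^{\cup(3r)}\!\setminus\!S_o^{\cup(2r)}$ — and then pass to the increasing limit defining the $(\,\cdot\,)^{\uparrow}$ class by monotone convergence. The harmonic case is in fact slightly \emph{easier} than the subharmonic one here, since one does not need to preserve positivity of $v$ itself (only near $\partial D$), so the reflection/truncation surgery has more room; the genuinely delicate point remains the uniformity of the constant $C$ over the whole class, which I would handle exactly as in the proof of [s\ref{{s}1P}]$\Rightarrow$[s\ref{{s}2}].
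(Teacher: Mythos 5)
Your proposal follows essentially the same route as the paper: the same closed cycle [h\ref{{h}1}]$\Rightarrow$[h\ref{{h}1J}]$\Rightarrow$[h\ref{{h}1P}]$\Rightarrow$[h\ref{{h}2+}]$\Rightarrow$[h\ref{{h}2}]$\Rightarrow$[h\ref{{h}3}]$\Rightarrow$[h\ref{h8}]$\Rightarrow$[h\ref{{h}1Jinfty}]$\Rightarrow$[h\ref{{h}1}], with the same identification of [h\ref{{h}1P}]$\Rightarrow$[h\ref{{h}2+}] as the hard step, handled exactly as in Secs.~\ref{GTg}--\ref{AStf} by gluing a multiple of a Green's function onto the (extended) test function to obtain an increasing sequence of Arens\,--\,Singer potentials and then passing to the limit by monotone convergence with a uniform constant. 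Two minor slips that do not affect the plan: [h\ref{{h}1J}]$\Rightarrow$[h\ref{{h}1Jinfty}] requires no mollification whatsoever, since it is merely a restriction of the balayage inequality to a subclass of measures (smoothing would matter only for the converse, which the cycle supplies via Theorem~\ref{Th5}); and in [h\ref{{h}2+}]$\Rightarrow$[h\ref{{h}2}] the shell correction is controlled by the lower bound $b_-$ rather than $b_+$, the paper's constant being $-\varDelta_M(S_o^{\cup(4r)}\!\setminus\!S_o)\,b_-+C$.
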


\begin{comments}[{\rm to Theorem \ref{crit2}}]
{\rm The implication [h\ref{{h}1}]$\Rightarrow$[h\ref{{h}1J}] is a very special case of \cite[Proposition 7.1]{Kha07} or  \cite[Corollary 8.1-I]{KhaRozKha19}. 
The implication [h\ref{{h}1J}]$\Rightarrow$[h\ref{{h}1Jinfty}] is obvious, since 
$AS_o(D)\bigcap  \Meas^{+1}_{\comp}(D\!\setminus\!S_o)\bigcap \bigl(C^{\infty}(D)\dd \lambda_d\bigr)\subset AS_o(D)$. 
The implication [h\ref{{h}1J}]$\Rightarrow$[h\ref{{h}1P}]
is noted in \cite[Theorem 6, (3.20)$\Rightarrow$(3.22)]{Kha07}.
The implication [h\ref{{h}2}]$\Rightarrow$[h\ref{{h}3}] follows from 
$\sbh_{00}(D\!\setminus\!S_o; r,b_-< b_+)\bigcap  C^{\infty}(D\!\setminus\!S_o) \subset \sbh_{+0}^{\uparrow}(D\!\setminus\!S_o;  r, b_-< b_+)$ gives . 
Thus, if we prove the implications [h\ref{{h}1P}]$\Rightarrow$[h\ref{{h}2+}]$\Rightarrow$[h\ref{{h}2}]
 (see Sec. \ref{AStf}) and [h\ref{{h}3}]$\Rightarrow$[h\ref{h8}]$\Rightarrow$[h\ref{{h}1Jinfty}]$\Rightarrow$[h\ref{{h}1}] (see Sec.~\ref{ASJem}--\ref{proofs31}), then Theorem \ref{crit2} will be completely proved.}
\end{comments}

\begin{remark}\label{remh1} 
 Only the proof of implication[h\ref{{h}3}]$\Rightarrow$[h\ref{h8}] uses that the boundary $\partial D$ is \textit{non-polar.\/}  Thus, all other implications explicitly written above in the Comments to Theorem \ref{crit2} are true \textit{for any domain }$D$.
\end{remark}

\section{Applications to the distribution of zeros of  holomorphic functions}\label{secH}
\setcounter{equation}{0}

For $n \in \NN$ we  denote by $\mathbb C^n$ the {\it $n$-dimensional complex  space over $\CC$\/} with the standard {\it norm\/} $|z|:=\sqrt{|z_1|^2+\dots+|z_n|^2}$ for $z=(z_1,\dots ,z_n)\in \CC^n$ and the distance function $\dist (\cdot, \cdot)$. By $\CC^n_{\infty}:=\CC^n\cup \{\infty\}$, and $\CC_{\infty}:=\CC_{\infty}^1$ we denote the \textit{one-point Alexandroff compactifications of $\CC^n$, and $\CC$;\/} $|\infty|:=+\infty$. 
If necessary, we identify $\CC^n$ and $\CC^n_{\infty}$
with $\RR^{2n}$ and $\RR^{2n}_{\infty}$ respectively (over $\RR$), and the preceding terminology and concepts are  naturally transferred  from $\RR^{2n}$  to $\CC^n$ and from $\RR^{2n}_{\infty}$  to $\CC^n_{\infty}$.

We use the  \textit{outer Hausdorff $p$-measure} $\varkappa_p$ with $p\in \NN_0$
\cite[A6]{Chirka}:
\begin{subequations}\label{df:sp}
  \begin{align}
\varkappa_p(S)&:=b_p \lim_{0<r\to 0} \inf \biggl\{\sum_{j\in \NN}r_j^p\,\colon  
  S\subset \bigcup_{j\in \NN}B(x_j,r_j),  0\leq r_j<r\biggr\}, \quad S\subset \RR^d,
\tag{\ref{df:sp}H}\label{df:spH}
 \\ 
b_p &\overset{\eqref{df:cm}}{:=}
\begin{cases} 1\quad&\text{if  $p=0$,}\\
\dfrac{s_{p-1}}{p}\quad&\text{ if $p\in \NN,$}
\end{cases}  \quad \text{is the {\it volume of the unit ball $\BB$ in  $\RR^p$}}.
\tag{\ref{df:sp}b}\label{df:spb}
\end{align}
\end{subequations}
Thus, for $p=0$, for any $Q\subset \RR^d$, its Hausdorff $0$-measure $\varkappa_0(Q)$ is the {\it cardinality\/} $\#Q$ of $Q$,  $\varkappa_d=\lambda_d$
on $\mathcal B(\RR^d)$, and $\sigma_{d-1}:=\varkappa_{d-1}\bigm|_{\partial \BB}$ on $\mathcal B(\partial \BB)$. 

For a subset $Q\subset \CC^n$, the class $\Hol (Q)$ consists of restrictions to $Q$ of holomorphic functions $f$ on  an  open set $O_f\supset Q$; $\Hol_*(Q):=\Hol(Q)\!\setminus\!0$.

\paragraph{\bf Zeros of holomorphic functions of several variables} 
{\cite[Ch.~1, 1,2]{Chirka}, \cite[\S~11]{Ronkin}, \cite[Ch.~4]{Kha12}. The \textit{counting function\/ {\rm (or multiplicity function, or divisor)} of zeros} of function $f\in  \Hol_*(D)$ on a {\it domain} $D\subset \CC^n$ is a function
$\Zero_f\colon D\to \NN_0$ that can be defined as
\cite[1.5, Proposition 2]{Chirka} 
\begin{equation*}
\Zero_f(z):=\max\Bigl\{p\in \NN_0  \colon 
\limsup_{D\ni z'\to z}\frac{|f(z')|}{|z'-z|^p}<+\infty\Bigr\},
\quad z\in D,
\tag{\ref{nZn}Z}\label{Zerof0}
\end{equation*} 
with the \textit{support set}
$\supp \Zero_f=\bigl\{z\in D\colon f(z)=0\bigr\}$.
For $f=0\in \Hol(D)$, by definition, $\Zero_0\equiv +\infty$ on $D$. Each counting function of zeros $\Zero_f$ is associated with a \textit{ counting measure of zeros} $n_{\Zero_f} \in \Meas^+ (D)$ defined as a Radon measure:
\begin{subequations}\label{nZn}
\begin{align}
n_{\Zero_f}(c)&:= \int_D c \dd n_{\Zero_f} 
:= \int_D c  \Zero_f \dd \varkappa_{2n-2},\quad
c \in C_0(D):= \bigl\{c\in C(D)\colon \supp c \Subset D\bigr\},
\tag{\ref{nZn}R}\label{{nZn}R}
\\
\intertext{or, equivalently, as a Borel measure on $D$:}
n_{\Zero_f} (Q) &= \int_Q
\Zero_f \dd \varkappa_{2n-2} 
\quad\text{\it for each compact subset $Q \Subset D$}.
\tag{\ref{nZn}B}\label{{nZn}B}
\end{align}
\end{subequations} 

\begin{PLf}[{\rm \cite{Lelong}, \cite{Chirka}}]
If $f \in \Hol_*(D)$, i.e.,  $\ln |f| \in \sbh_* (D)$, then
\begin{equation}\label{nufZ}
\varDelta_{\ln |f|}\overset{\eqref{df:cm}}{:=}  c_{2n}\bigtriangleup\! \ln |f| \overset{\eqref{df:cm}}{=}\frac{(n-1)!}{2\pi^n \max\{1,2n-2\}}\bigtriangleup\! 
 \ln |f| {=}n_{\Zero_f}.
\end{equation}
\end{PLf}
Let $Z \colon D \to \RR^+$ be a function on $D$.
We call this function $Z$   a \textit{subdivisor of zeros\/} for function $f \in  \Hol(D)$ if $Z\leq  \Zero_f$ on $D$. Integrals with respect to a positive measure whose integrands contain a subdivisor are everywhere below treated as upper integrals $\int^*$ \cite{Bourbaki}, \cite{ConCor}.
  
\paragraph{\bf Zeros of holomorphic functions of one variable}
\cite[0.1]{Kha12}. Let $D\subset \CC$. The counting function of zeros of $f\in  \Hol_*(D)$ is the function
\begin{subequations}
\begin{align*}
\Zero_f(z)&\overset{\eqref{Zerof0}}{=}\max \Bigl\{ p\in \NN_0\colon 
\frac{f}{(\cdot-z)^p}\in \Hol(D)\Bigr\}, \quad z\in D,
\\
\intertext{and the counting measure of zeros for $f$ is defined as a Radon measure:}
n_{\Zero_f}(c)&\overset{\eqref{{nZn}R}}{=}\sum_{z\in D} \Zero_f(z)c(z)=\int_D c\Zero_f\dd \varkappa_0, \quad c \in C_0(D),
\\
\intertext{or as a Borel measure on $D$:}
n_{\Zero_f}(Q)&\overset{\eqref{{nZn}B}}{=}\sum_{z\in S} \Zero_f(z),\quad
Q \Subset D.
\end{align*}
\end{subequations}
In this case, the support set $\supp \Zero_f$ is a locally finite set of isolated points in $D$. An \textit{ indexed set} ${\sf Z}:=\{{\sf z}_{k}\}_{k=1,2,\dots}$ of points ${\sf z}_{k}\in D$
is {\it locally finite in} this domain $D$ if 
$\# \{k \colon {\sf z}_{k}\in Q\}<+\infty$ for each subset $Q\Subset D$.
The \textit{counting measure\/}  $n_{\sf Z}\in \Meas^+(D)$ of this indexed set ${\sf Z}$ is defined as
\begin{equation*}
n_{\sf Z}:=\sum_k \delta_{{\sf z}_k}, 
\text{ or, equivalently, }
n_{\sf Z}(Q):=\sum_{{\sf z}_k\in Q}1 \text { for each $Q\subset D$}.
\end{equation*}
 Let ${\sf Z}$ and ${\sf Z}'$ be a pair of  indexed locally finite sets in $D$. By definition, ${\sf Z}={\sf Z}'$ if $n_{\sf Z}=n_{{\sf Z}'}$, and
${\sf Z}'\subset {\sf Z}$ if $n_{{\sf Z}'}\leq n_{\sf Z}$. 
An indexed set ${\sf Z}$ is the {\it zero set} of $f\in \Hol_*(D)$ if $n_{\sf Z}=n_{\Zero_f}$. 
A function $f\in \Hol(D)$ \textit{vanishes on\/}  ${\sf Z}$
if ${\sf Z}\subset \Zero_f$.

\subsection{\bf Zero sets of holomorphic functions with restrictions on their growth}\label{SSHn}

The following Theorem \ref{ThHol} develops results from \cite[Main Theorem, Theorems 1--3]{KhaRoz18}, and from \cite[Theorem 1]{MenKha19}. Both  integrals on the right-hand sides of inequalities \eqref{in:HZ} and \eqref{in:HZb}, and a pair of integrals in inequality \eqref{in:HZb+} below  are, generally speaking, upper integrals in the sense N.~Bourbaki \cite{Bourbaki}, \cite{ConCor}.
We denote by $\dsbh (O):=\sbh(O)-\sbh (O)$ the class of all {\it $\delta$-subharmonic\/} functions on $O\subset \RR^d$ \cite{Arsove}, \cite[3.1]{KhaRoz18}.

\begin{theorem}\label{ThHol} Let $D\neq \varnothing$ be a domain in $\CC^n$,
 let
\begin{equation}\label{M}
M_+\in \sbh_*(D)\cap C(D), 
\quad  M_-\in \sbh_*(D), \quad M:=M_+-M_-\in \dsbh(D)
\end{equation}
are functions   with Riesz measures $\varDelta_{M_+},\varDelta_{M_-}\in \Meas^+(D)$ and Riesz  charge $\varDelta_M=\varDelta_{M_+}-\varDelta_{M_-}\in \Meas (D)$, respectively, and let $f\in \Hol_* (D)$ be a function such that
\begin{equation}\label{fMD}
|f|\leq \exp M \quad \text{on $D$}.
\end{equation}
Then
\begin{enumerate}[{\rm [{\sf Z}1]}]
\item\label{ZI} 
For any  connected subset $S_o\Subset D$ 
from\/ \eqref{S0seto} and  for any numbers $r, b_{\pm}$ from\/ \eqref{bbpmr}, i.e.,
$0<4r<\dist (S_o, \partial D)$, $-\infty<b_-<0<b_+<+\infty$, 
there is a constant $C\in \RR$  such that
\begin{equation}\label{in:HZ}
\int_{D\!\setminus\!S_o} v \Zero_f \dd \varkappa_{2n-2}
\leqslant \int_{D\!\setminus\!S_o^{\cup(4r)}} v \dd \varDelta_M
+ \int_{S_o^{\cup(4r)}\!\setminus\!S_o} (-v) \dd \varDelta_{M_-}+C \quad
\text{for each $v\in \sbh_{+0}^{\uparrow}(D\!\setminus\!S_o;\circ r, b_-< b_+)$.} 
\end{equation}
\item\label{ZII} For any  connected set $S_o$  from \eqref{S0seto} and  numbers $r, b_{\pm}$ from\/ \eqref{bbpmr}
there is a constant $C\in \RR$ 
such that
\begin{equation}\label{in:HZb}
\int_{D\!\setminus\!S_o} v \Zero_f \dd \varkappa_{2n-2}
\leqslant \int_{D\!\setminus\!S_o} v \dd \varDelta_M+C \quad
\text{for each } v\in \sbh_{+0}^{\uparrow}(D\!\setminus\! S_o;  r, b_-< b_+).
\end{equation}
 
\item\label{ZIII} For any  set $S_o$  from \eqref{S0seto}, constant $b_+\in \RR^+\!\setminus\!0$, and subdivisor 
$ Z\leq \Zero_f$,  there is a constant  
$C \in\RR$ such that
\begin{equation}\label{in:HZb+}
\int_{D\!\setminus\!S_o} v  Z \dd \varkappa_{2n-2}
\leqslant \int_{D\!\setminus\!S_o} v \dd \varDelta_M+C 
\quad \text{for each $v\in \sbh_0^{+\uparrow}(D\!\setminus\!S_o; \leq b_+)$.}
\end{equation}
\end{enumerate}
Besides, the implication   {\sf [Z\ref{ZI}]}$\Rightarrow${\sf [Z\ref{ZII}]} is true.
\end{theorem}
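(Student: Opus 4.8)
The plan is to reduce all four assertions to implications of Theorems \ref{crit1} and \ref{crit2} already recorded in the Comments above, applied not to the $\delta$-subharmonic function $M$ but to its subharmonic majorant $M_+$. First I would set $u:=\ln|f|+M_-$. By \eqref{fMD} and \eqref{M} we have $\ln|f|\leq M=M_+-M_-$ on $D$, hence
\begin{equation}\label{PPuM}
u=\ln|f|+M_-\leq M_+\qquad\text{on }D .
\end{equation}
Since $f\not\equiv 0$ and $M_-\not\equiv-\infty$, the sum $u$ is subharmonic and finite $\lambda_{2n}$-almost everywhere, so $u\in\sbh_*(D)$, while $M_+\in\sbh_*(D)\cap C(D)$. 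The Poincar\'e\,--\,Lelong formula \eqref{nufZ} gives $\varDelta_{\ln|f|}=n_{\Zero_f}$, and additivity of the Riesz measure on sums of subharmonic functions yields the key identity
\begin{equation}\label{PPDu}
\varDelta_u=n_{\Zero_f}+\varDelta_{M_-}\qquad\text{in }\Meas^+(D) .
\end{equation}
By \eqref{PPuM} we may take $h\equiv 0$, so for the pair $(u,M_+)$ in the role of $(u,M)$ both statement [s\ref{{s}1}] of Theorem \ref{crit1} and statement [h\ref{{h}1}] of Theorem \ref{crit2} hold.

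For [Z\ref{ZIII}] I would invoke the implication [s\ref{{s}1}]$\Rightarrow$[s\ref{{s}2}] of Theorem \ref{crit1}, which by Remark \ref{rems1} is valid for an arbitrary domain $D$ (no non-polarity of $\partial D$ is needed). Applied to $(u,M_+)$ it furnishes, for each $S_o$ and $b_+$ as in \eqref{S0seto}, a constant $C$ with $\int_{D\setminus S_o}v\dd\varDelta_u\leq\int_{D\setminus S_o}v\dd\varDelta_{M_+}+C$ for every $v\in\sbh_0^{+\uparrow}(D\setminus S_o;\leq b_+)$. Any such $v$ obeys $0\leq v\leq b_+$ on $D\setminus S_o$, so for a subdivisor $Z\leq\Zero_f$ monotonicity of the upper integral together with \eqref{{nZn}R}--\eqref{{nZn}B} gives $\int^*vZ\dd\varkappa_{2n-2}\leq\int^*v\Zero_f\dd\varkappa_{2n-2}=\int_{D\setminus S_o}v\dd n_{\Zero_f}$; subtracting $\int_{D\setminus S_o}v\dd\varDelta_{M_-}$ from the displayed inequality by means of \eqref{PPDu} and $\varDelta_M=\varDelta_{M_+}-\varDelta_{M_-}$ then produces exactly \eqref{in:HZb+}. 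The rearrangement is legitimate because $v\geq 0$ keeps every integrand against a positive measure nonnegative.

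For [Z\ref{ZI}] and [Z\ref{ZII}] the same device runs through the harmonic version. Fix a connected $S_o$ as in \eqref{S0seto} and $r,b_\pm$ as in \eqref{bbpmr}; by Proposition \ref{llemmaD} the set $S_o^{\cup(4r)}$ is a domain with $S_o^{\cup(4r)}\Subset D$, and the precompact annulus $S_o^{\cup(4r)}\!\setminus\!S_o$ carries finite $\varDelta_{M_\pm}$-mass. Applying [h\ref{{h}1}]$\Rightarrow$[h\ref{{h}2+}] of Theorem \ref{crit2} — valid for any $D$ by Remark \ref{remh1} — to $(u,M_+)$ gives a constant $C$ with $\int_{D\setminus S_o}v\dd\varDelta_u\leq\int_{D\setminus S_o^{\cup(4r)}}v\dd\varDelta_{M_+}+C$ for each $v\in\sbh_{+0}^{\uparrow}(D\setminus S_o;\circ r,b_-< b_+)$. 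Splitting $\int_{D\setminus S_o}v\dd\varDelta_{M_-}=\int_{D\setminus S_o^{\cup(4r)}}v\dd\varDelta_{M_-}+\int_{S_o^{\cup(4r)}\setminus S_o}v\dd\varDelta_{M_-}$ (admissible since $v\geq b_-$ on the annulus, so no $\infty-\infty$ arises), subtracting it via \eqref{PPDu}, and collecting $\varDelta_{M_+}-\varDelta_{M_-}=\varDelta_M$ on $D\setminus S_o^{\cup(4r)}$, I obtain \eqref{in:HZ}, upon noting $\int_{D\setminus S_o}v\Zero_f\dd\varkappa_{2n-2}=\int_{D\setminus S_o}v\dd n_{\Zero_f}$. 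For [Z\ref{ZII}] one repeats this argument with [h\ref{{h}1}]$\Rightarrow$[h\ref{{h}2}] replacing [h\ref{{h}1}]$\Rightarrow$[h\ref{{h}2+}]: it yields $\int_{D\setminus S_o}v\dd\varDelta_u\leq\int_{D\setminus S_o}v\dd\varDelta_{M_+}+C$ for $v\in\sbh_{+0}^{\uparrow}(D\setminus S_o;r,b_-< b_+)$, and subtracting $\int_{D\setminus S_o}v\dd\varDelta_{M_-}$ gives \eqref{in:HZb}.

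Finally, for the implication [Z\ref{ZI}]$\Rightarrow$[Z\ref{ZII}] I would first check the inclusion $\sbh_{+0}^{\uparrow}(D\setminus S_o;r,b_-< b_+)\subset\sbh_{+0}^{\uparrow}(D\setminus S_o;\circ r,b_-< b_+)$: for $x\in S_o^{\cup(3r)}\!\setminus\!S_o^{\cup(2r)}$ the closed ball $\clos B(x,r)$ lies in $S_o^{\cup(4r)}\!\setminus\!S_o$, so $v\geq b_-$ there forces $v^{\circ r}(x)\geq b_-$, and this passes to increasing limits. Hence [Z\ref{ZI}] applies to every $v$ in the $r$-class; adding $\int_{S_o^{\cup(4r)}\setminus S_o}v\dd\varDelta_M$ to both sides of \eqref{in:HZ} and using $v\geq b_-$ on the annulus together with $\varDelta_{M_+}=\varDelta_M+\varDelta_{M_-}\geq 0$ to bound the arising term $-\int_{S_o^{\cup(4r)}\setminus S_o}v\dd\varDelta_{M_+}\leq -b_-\varDelta_{M_+}\bigl(S_o^{\cup(4r)}\!\setminus\!S_o\bigr)<+\infty$ by a constant, one recovers \eqref{in:HZb} with an enlarged $C$. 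I expect the only real difficulty to lie in the reduction \eqref{PPuM}--\eqref{PPDu} and in the bookkeeping with upper integrals, charges, and possibly infinite boundary mass that makes the splittings of integration domains and the uses of $\varDelta_M=\varDelta_{M_+}-\varDelta_{M_-}$ rigorous; all the analytic substance is already carried by Theorems \ref{crit1} and \ref{crit2}.
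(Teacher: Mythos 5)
Your proposal is correct and follows essentially the same route as the paper's proof: the reduction $u:=\ln|f|+M_-\leq M_+$, the Poincar\'e\,--\,Lelong identity $\varDelta_u=n_{\Zero_f}+\varDelta_{M_-}$, and the implications [h\ref{{h}1}]$\Rightarrow$[h\ref{{h}2+}], [h\ref{{h}1}]$\Rightarrow$[h\ref{{h}2}], [s\ref{{s}1}]$\Rightarrow$[s\ref{{s}2}] together with Remarks \ref{remh1} and \ref{rems1}, plus monotonicity for the subdivisor $Z\leq\Zero_f$. Your treatment of [{\sf Z}\ref{ZI}]$\Rightarrow$[{\sf Z}\ref{ZII}] (bounding $-\int_{S_o^{\cup(4r)}\setminus S_o}v\dd\varDelta_{M_+}$ via $v\geq b_-$ on the precompact annulus) is just a minor repackaging of the paper's estimate $\int_{S_o^{\cup(4r)}\setminus S_o}|v|\dd|\varDelta_M|\leqslant\max\{b_+,-b_-\}\,|\varDelta_M|(S_o^{\cup(4r)}\!\setminus\!S_o)$, so no substantive difference remains.
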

\begin{proof} We can rewrite \eqref{fMD} as
\begin{equation*}
\sbh_*(D)\ni u:=\ln |f|+M_-\leq M_+\overset{\eqref{M}}{\in} \sbh_*(D), 
\end{equation*}
and, by implication [h\ref{{h}1}]$\Rightarrow$[h\ref{{h}2+}] of  Theorem \ref{crit2}
together with Remark \ref{remh1},
there is a constant $C\in \RR$ such that 
\begin{equation*}
\int_{D\!\setminus\!S_o} v \dd (\varDelta_{\ln|f|}+\varDelta_{M_-})
\overset{\eqref{almB}}{\leqslant} \int_{D\!\setminus\!S_o^{\cup(4r)}} v \dd \varDelta_{M_+}+C
\end{equation*}
for each $v\in \sbh_{+0}^{\uparrow}(D\!\setminus\!S_o;\circ r, b_-< b_+)$.
Hence, by  Poincar\'e\,--\,Lelong formula \eqref{nufZ}, 
\begin{equation*}
\int_{D\!\setminus\!S_o} v \Zero_f \dd \varkappa_{2n-2}\leqslant
\int_{D\!\setminus\!S_o^{\cup(4r)}} v \dd (\varDelta_{M_+}-\varDelta_{M_-})
- \int_{S_o^{\cup(4r)}\!\setminus\!S_o} v\dd \varDelta_{M_-}
\end{equation*}
for each $v\in \sbh_{+0}^{\uparrow}(D\!\setminus\!S_o;\circ r, b_-< b_+)$,
and we obtain  the statement {\rm [{\sf Z}\ref{ZI}]} with \eqref{in:HZ}. 
Similary,  by  Poincar\'e\,--\,Lelong formula \eqref{nufZ} and by implication [h\ref{{h}1}]$\Rightarrow$[h\ref{{h}2}] 
of  Theorem \ref{crit2} together with Remark \ref{remh1},
we obtain the statement {\rm [{\sf Z}\ref{ZII}]} with  \eqref{in:HZb}. Besides, the implication {\sf [Z\ref{ZI}]}$\Rightarrow${\sf [Z\ref{ZII}]} follows from  the estimate 
\begin{equation*}
\int_{S_o^{\cup(4r)}\!\setminus\!S_o}|v|\dd |\varDelta_M|\leqslant
\max \{b_+,-b_-\} |\varDelta_M|(S_o^{\cup(4r)}\!\setminus\!S_o)\quad
\text{for each $v\in\sbh_{+0}(D\!\setminus\!S_o; r,b_-< b_+)$}.
\end{equation*}
Finally,  by  the Poincar\'e\,--\,Lelong formula, and by implication [s\ref{{s}1}]$\Rightarrow$[s\ref{{s}2}]
of  Theorem \ref{crit1} together with Remark \ref{rems1}, we get 
the statement {\rm [{\sf Z}\ref{ZIII}]} with \eqref{in:HZb+},
 since
\begin{equation*}
\int_{D\!\setminus\!S_o} v  Z \dd \varkappa_{2n-2}\leqslant \int_{D\!\setminus\!S_o} v \Zero_f \dd \varkappa_{2n-2} 
\end{equation*} 
for every\textit{ positive\/} function $v\in \sbh_0^{+\uparrow}(D\!\setminus\!S_o; \leq b)$.
\end{proof}
\begin{remark} If $n>1$ and the function $M$ 
from \eqref{fMD} is plurisubharmonic, then the scale of necessary conditions for the distribution of zeros of $f$ can be much wider than  in Theorem \ref{ThHol}. It should include other characteristics related to the Hausdorff measure of smaller dimension than $2n-2$. We plan to explore this elsewhere. In particular, the analytical and polynomial disks should play a key role for this  approach (see \cite[Ch.~3]{Krantz}, \cite{Schachermayer}, \cite{Poletsky93}, \cite{Po99},  \cite[\S~4]{KhaKha19}, etc.). 
\end{remark}

\subsection{\bf The case of a finitely connected domain $D\subset \CC$}\label{Dfd}

We denote by $\Conn Q$ the set of {\it all connected components of} $Q\subset \RR_{\infty}^d$.

\underline{Throughout this Subsec. \ref{Dfd},} the domain $D\subset \CC$ is\textit{ finitely connected in\/} $\CC_{\infty}$ with number of component $\#\Conn
 \partial D<+\infty$,  among which \textit{there is at least one component containing two different points.\/} In this case the  \textit{boundary} $\partial D$ of $D$ is  \textit{non-polar.}

\begin{lemma}[{\rm \cite[Lemma 2.1]{Kha07}}]\label{lem:har} If $h\in \har (D)$ is harmonic, then there are a real  number $c <\#\Conn \partial D-1$     
and a function $g\in \Hol (D)$ without zeros in $D$, i.e.,
with $n_{\Zero_g}=0$, such that
\begin{equation}\label{estHar}
\ln \bigl|g(z)\bigr|\leqslant h(z)+c^+\ln\bigl(1+|z|\bigr)\quad\text{for all $z\in D$.} 
\end{equation}
If\/ $\clos D\neq \CC_{\infty}$, then we can choose\/ $c:=0$. 
\end{lemma}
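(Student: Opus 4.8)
The plan is to realize $g$ as $\exp$ of a branch of ``$h+i\tilde h$'', after first killing the periods of the conjugate $\tilde h$ by subtracting logarithmic poles placed in the holes of $D$. Write $m:=\#\Conn\partial D$ and let $K_1,\dots,K_m$ be the connected components of $\CC_\infty\setminus D$ (for an $m$-connected domain there are exactly $m$ of them, each a continuum); relabel so that $\infty\in K_m$ and pick a finite point $b_j\in K_j$ for $j=1,\dots,m-1$. Choose loops $\gamma_1,\dots,\gamma_{m-1}$ in $D$ with $\gamma_j$ winding once around $K_j$ and not around the other $K_k$; these form a basis of $H_1(D;\mathbb Z)\cong\mathbb Z^{m-1}$. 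A locally defined harmonic conjugate $\tilde h$ of $h$ has an increment $P_j\in\RR$ over $\gamma_j$; set $\mu_j:=P_j/(2\pi)$. Since $\arg(\,\cdot-b_j\,)$ is a harmonic conjugate of the function $\ln|\,\cdot-b_j\,|$ (harmonic on $D$ because $b_j\notin D$), with period $2\pi\delta_{jk}$ around $\gamma_k$, the function
\[
u:=h-\sum_{j=1}^{m-1}\mu_j\ln|\,\cdot-b_j\,|
\]
is harmonic on $D$ and \emph{all} of its conjugate periods vanish; hence $u$ has a single-valued harmonic conjugate $v$ on $D$. Then $g_1:=\exp(u+iv)\in\Hol(D)$ is zero-free (the exponential never vanishes) and $\ln|g_1|=u$ on $D$.

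Next I would absorb the remaining logarithms into a rational factor whose zeros and poles lie inside the holes. Put $n_j:=\lceil\mu_j\rceil\in\mathbb Z$, so that $0\le n_j-\mu_j<1$, and set $R:=\prod_{j=1}^{m-1}(\,\cdot-b_j\,)^{n_j}$; since all zeros and poles of $R$ lie in $\bigcup_jK_j\subset\CC_\infty\setminus D$, both $R$ and $g:=g_1R$ are holomorphic and zero-free on $D$, with
\[
\ln|g(z)|=h(z)+\sum_{j=1}^{m-1}(n_j-\mu_j)\ln|z-b_j|,\qquad z\in D.
\]
Using $\ln|z-b_j|\le\ln(1+|z|)+\ln(1+|b_j|)$ together with $n_j-\mu_j\ge0$, and then dividing $g$ by a suitable positive constant, one gets $\ln|g(z)|\le h(z)+c\ln(1+|z|)$ on $D$ with $c:=\sum_{j=1}^{m-1}(n_j-\mu_j)$ (an empty sum when $m=1$). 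Each summand lies in $[0,1)$, so $0\le c<m-1=\#\Conn\partial D-1$ when $m\ge2$ and $c^+=c$, which is exactly \eqref{estHar}; when $m=1$ one already has $\ln|g|=h$, so any $c<0$ works. The one point requiring care here is the choice $n_j=\lceil\mu_j\rceil$: rounding \emph{up} makes every residual weight $n_j-\mu_j$ nonnegative, so the leftover logarithms are dominated from above by $c\ln(1+|z|)$ with $c$ a sum of $m-1$ numbers in $[0,1)$, hence strictly below $m-1$ — rounding down, or any other choice, would not control the sign of these terms near the holes.

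Finally, for the addendum $\clos D\neq\CC_\infty\Rightarrow c=0$: choose $p\in\CC_\infty\setminus\clos D$. If $p=\infty$ then $D$ is bounded; otherwise the Möbius transformation $z\mapsto 1/(z-p)$ maps $D$ conformally onto a bounded domain, carries $h$ to a harmonic function, and preserves $\#\Conn\partial(\,\cdot\,)$, so we may assume $D$ bounded. Then each $|z-b_j|$ is bounded on $D$, hence $\sum_{j=1}^{m-1}(n_j-\mu_j)\ln|z-b_j|$ is bounded above on $D$; subtracting that bound from $\ln g$ gives $\ln|g|\le h$ on $D$, i.e.\ \eqref{estHar} holds with $c:=0$. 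I expect the main obstacle to be organizational rather than substantive: correctly bookkeeping the $(m-1)$-dimensional space of conjugate periods of a harmonic function on a finitely connected domain (so that subtracting exactly $m-1$ logarithms kills \emph{all} periods), and the sign-chasing of the previous paragraph that pins the growth exponent $c$ into $[0,\#\Conn\partial D-1)$.
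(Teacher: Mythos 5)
Your construction is correct and is essentially the standard argument behind the cited result: the paper gives no proof of its own here but refers to \cite[Lemma 2.1]{Kha07}, where the same period-killing device (subtracting $\mu_j\ln|\cdot-b_j|$ with $b_j$ in the bounded complementary components, exponentiating the resulting single-valued conjugate, and restoring integer powers $(\cdot-b_j)^{n_j}$ with $n_j=\lceil\mu_j\rceil$ so that the residual weights lie in $[0,1)$ and sum to $c<\#\Conn\partial D-1$) is used. Your handling of the addendum via a M\"obius reduction to the bounded case is likewise sound, since the inequality with $c=0$ is conformally invariant.
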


\begin{theorem}\label{th:3}  Let $M\in \dsbh(D)$ be a function from\/ \eqref{M} with $M_+\in C(D)$, and\/  ${\sf Z}:=\{{\sf z}_{k}\}_{k}$ be an  indexed 
locally finite  set in $D$ of points ${\sf z}_{k}\in D$.
If there are a connected set\/ $S_o$ as in \eqref{S0seto},  numbers $b_{\pm}$, $r$ as in\/ \eqref{bbpmr}, and $C\in \RR$ such that    
\begin{equation}\label{in:HZb+Z}
\sum_{{\sf z}_k\in D\!\setminus\!S_o} 
v ({\sf z}_k)\leqslant \int_{D\!\setminus\!S_o} v \dd \varDelta_M+C \quad
\text{for each  } v\in \sbh_{00}(D\!\setminus\!S_o; r,b_-< b_+)\bigcap  C^{\infty}(D\!\setminus\!S_o),
\end{equation}
then there are a real number  $c<\#\Conn \partial D-1$ 
and a function  $f\in \Hol(D)$ with zero set 
${\sf Z}$ such that 
\begin{equation}\label{fM}
\ln \bigl|f(z)\bigr|\leqslant M(z)+c^+\ln\bigl(1+|z|\bigr) \quad \text{for each
$z\in D$}, 
\end{equation}
where  $c:=0$ if $\clos D\neq \CC_{\infty}$.
\end{theorem}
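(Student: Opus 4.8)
The plan is to recognize the hypothesis \eqref{in:HZb+Z} as being, up to a harmless rewriting, precisely statement [h\ref{{h}3}] of the harmonic version Theorem \ref{crit2}, applied to the continuous majorant $M_+$ and to a subharmonic ``primitive'' of the divisor ${\sf Z}$ shifted by $M_-$; then to run [h\ref{{h}3}]$\Rightarrow$[h\ref{{h}1}] to produce a harmonic function $h$ with $u+h\leq M_+$; and finally to convert $h$ into a zero-free holomorphic factor by Lemma \ref{lem:har}, which is exactly where the bound $c<\#\Conn \partial D-1$ and the logarithmic correction $c^+\ln(1+|z|)$ come from.

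First, since ${\sf Z}$ is locally finite in $D$, the Weierstrass theorem for plane domains provides a function $f_1\in\Hol_*(D)$ with $n_{\Zero_{f_1}}=n_{\sf Z}$; then $u:=\ln|f_1|+M_-\in\sbh_*(D)$ --- it is $\not\equiv-\infty$ because $\ln|f_1|=-\infty$ only on the locally finite set ${\sf Z}$ and $M_-=-\infty$ only on a polar set --- and by the Poincar\'e\,--\,Lelong formula \eqref{nufZ} its Riesz measure equals $\varDelta_u=n_{\sf Z}+\varDelta_{M_-}$. Every test function $v\in\sbh_{00}(D\!\setminus\!S_o;r,b_-<b_+)\cap C^\infty(D\!\setminus\!S_o)$ is bounded above by $b_+$ and vanishes off some compact subset $S(v)\Subset D$, so that $\int_{D\setminus S_o}v\dd\varDelta_{M_-}\in[-\infty,+\infty)$; adding this integral to both sides of \eqref{in:HZb+Z} and using $\varDelta_{M_+}=\varDelta_M+\varDelta_{M_-}$ turns \eqref{in:HZb+Z} into
\[
\int_{D\setminus S_o}v\dd\varDelta_u\leqslant\int_{D\setminus S_o}v\dd\varDelta_{M_+}+C\qquad\text{for the same class of }v
\]
(in the case $\int v\dd\varDelta_{M_-}=-\infty$ the left-hand side is $-\infty$ and there is nothing to check). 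Since $\int_{D\setminus S_o}v\dd n_{\sf Z}=\sum_{{\sf z}_k\in D\setminus S_o}v({\sf z}_k)$, this is exactly statement [h\ref{{h}3}] of Theorem \ref{crit2} for the pair $u,M_+$; and the hypotheses of that theorem are met, since $M_+\in\sbh_*(D)\cap C(D)$, $u\in\sbh_*(D)$, and $\partial D$ is non-polar.

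By the implication [h\ref{{h}3}]$\Rightarrow$[h\ref{{h}1}] of Theorem \ref{crit2} there is $h\in\har(D)$ with $u+h\leq M_+$ on $D$, i.e. $\ln|f_1|+h\leq M_+-M_-=M$ pointwise on $D$. Applying Lemma \ref{lem:har} to $h$ gives a real number $c<\#\Conn \partial D-1$, with $c:=0$ when $\clos D\neq\CC_\infty$, and a zero-free function $g\in\Hol(D)$ (so $n_{\Zero_g}=0$) satisfying $\ln|g(z)|\leqslant h(z)+c^+\ln(1+|z|)$ on $D$. Setting $f:=f_1g\in\Hol(D)$ we obtain $n_{\Zero_f}=n_{\Zero_{f_1}}+n_{\Zero_g}=n_{\sf Z}$, so $f$ has zero set ${\sf Z}$, and
\[
\ln|f(z)|=\ln|f_1(z)|+\ln|g(z)|\leqslant\bigl(\ln|f_1(z)|+h(z)\bigr)+c^+\ln(1+|z|)\leqslant M(z)+c^+\ln(1+|z|),\quad z\in D,
\]
which is \eqref{fM}.

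The load-bearing facts --- Theorem \ref{crit2} and Lemma \ref{lem:har} --- are already at our disposal, so the only point that genuinely demands care is the middle, measure-theoretic step: one must check that $u$ really lies in $\sbh_*(D)$, that the charge identity $\varDelta_{M_+}=\varDelta_M+\varDelta_{M_-}$ may legitimately be paired with test functions that are merely bounded above and compactly supported rather than integrable outright, and that the additive constant $C$ is preserved unchanged under this manipulation.
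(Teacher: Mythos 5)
Your proof is correct and takes essentially the same route as the paper's: rewrite \eqref{in:HZb+Z} as the affine balayage condition [h\ref{{h}3}] for the pair $u$, $M_+$ with $\varDelta_u=n_{\sf Z}+\varDelta_{M_-}$, run [h\ref{{h}3}]$\Rightarrow$[h\ref{{h}1}] of Theorem \ref{crit2}, and convert the resulting harmonic minorant into a zero-free factor via Lemma \ref{lem:har}. The only (harmless, indeed slightly cleaner) difference is that you construct $u:=\ln|f_1|+M_-$ directly from the Weierstrass product, whereas the paper first obtains $u$ abstractly from Arsove's representation theorem and only afterwards decomposes it as $\ln|f_{\sf Z}|+M_-+H$ via Weyl's lemma; your ordering dispenses with both of those auxiliary steps.
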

\begin{proof} We can rewrite relation \eqref{in:HZb+Z} as
\begin{equation*}
\int_{D\!\setminus\!S_o}v \dd (n_{\sf Z}+\varDelta_{M_-})\leqslant \int_{D\!\setminus\!S_o} v \dd \varDelta_{M_+}+C 
\end{equation*}
where the constant $C$ is independent of $v\in \sbh_{00}(D\!\setminus\!S_o; r,b_-< b_+)\bigcap  C^{\infty}(D\!\setminus\!S_o)$. 
By Definition \ref{bal} in the form \eqref{SQ}, the Riesz measure  $\varDelta_{M_+}$
of function $M_+$ is an affine $\bigl(\sbh_{00}(D\!\setminus\!S_o; r,b_-< b_+)\bigcap  C^{\infty}(D\!\setminus\!S_o)\bigr)$-balayage of  $n_{\sf Z}+\varDelta_{M_-}\in \Meas^+(D)$. There is a function  $u\in \sbh_*(D)$ with the  Riesz measure $n_{\sf Z}+\varDelta_{M_-}$ \cite[Theorem 1]{Arsove}. It follows from  implication [h\ref{{h}3}]$\Rightarrow$[h\ref{{h}1}] of Theorem \ref{crit2}  that there exists a function $h\in \har(D)$ such that
$u+h\leq M_+$. According to one of Weierstrass theorems, there is  a function $f_{\sf Z}\in \Hol(D)$ with the zero set  $\Zero_{f_{\sf Z}}={\sf Z}$. Hence, using  Weyl's lemma for Laplace's equation,  we have a representation  $u=\ln |f_{\sf Z}|+M_-+H$, where $H\in \har(D)$, and 
\begin{equation*}
 \ln |f_{\sf Z}|+H+h\leq M_+-M_-\overset{\eqref{M}}{=}M \quad\text{on $D$}. 
\end{equation*}
By Lemma  \ref{lem:har}, there is a function $g\in \Hol(D)$ \textit{without zeros\/} such that 
\begin{equation*}
\ln |g|\overset{\eqref{estHar}}{\leq} H+h+c^+\ln\bigl(1+|\cdot|\bigr)\quad \text{on $D$},
\end{equation*} 
where $c$ is a constant from  Lemma \ref{lem:har}.  
Hence
\begin{equation*}
\ln |f_{\sf Z}|+\ln |g|\leq M+c^+\ln\bigl(1+|\cdot|\bigr) \quad\text{on $D$.}
\end{equation*}
 If we set $f:=gf_{\sf Z}\in \Hol(D)$, then ${\sf Z}=\Zero_f$, and  we have  \eqref{fM}. 
\end{proof}
The intersection of Theorem \ref{th:3} with Theorem \ref{ThHol}, [{\sf Z\ref{ZII}}],  gives the following criterium. 
\begin{theorem}\label{crit3} 
Under the conditions of Theorem\/ {\rm \ref{th:3}},  if the domain $D$ is simply connected with $\#\partial D>1$ or\/ $\CC_{\infty}\!\setminus\!\clos D\neq \varnothing$, then the following  four assertion are equivalent:
\begin{enumerate}[{\rm [{\bf z}1]}]
\item\label{z1} There is a function $f\in \Hol(D)$ with  $\Zero_f={\sf Z}$ such that
$|f|\leq \exp M$ on $D$. 

\item\label{z2} For any  connected set $S_o$  from\/ \eqref{S0seto} and for any  numbers $r,b_{\pm}$ from\/ \eqref{bbpmr}, 
there is a constant $C\in \RR$ such that 
\begin{equation*}
\sum_{{\sf z}_k\in D\!\setminus\!S_o} v ({\sf z}_k)
\leqslant \int_{D\!\setminus\!S_o^{\cup(4r)}} v \dd \varDelta_M
+ \int_{S_o^{\cup(4r)}\!\setminus\!S_o} (-v) \dd \varDelta_{M_-}+C  
\end{equation*}
for each test function $v\in \sbh_{+0}^{\uparrow}(D\!\setminus\!S_o;\circ r, b_-< b_+)$.

\item\label{z3} For any  connected set $S_o$  from \eqref{S0seto} and for any  numbers $r,b_{\pm}$ from\/ \eqref{bbpmr}, 
there is a constant\/ $C\in \RR$ 
such that
\begin{equation}\label{in:HZb1}
\sum_{{\sf z}_k\in D\!\setminus\!S_o} v ({\sf z}_k)
\leqslant \int_{D\!\setminus\!S_o} v \dd \varDelta_M+C
\end{equation}
for each test function $v\in \sbh_{+0}^{\uparrow}(D\!\setminus\! S_o;  r, b_-< b_+)$.
\item\label{z4} There are  connected set $S_o$  as in \eqref{S0seto},  numbers $r,b_{\pm}$ as in\/ \eqref{bbpmr}, and a constant\/ $C$  such that we have 
\eqref{in:HZb1} for each
$ v\in \sbh_{00}(D\!\setminus\!S_o; r,b_-< b_+)\bigcap  C^{\infty}(D\!\setminus\!S_o)$.
\end{enumerate}
\end{theorem}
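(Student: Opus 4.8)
\textbf{Proof proposal for Theorem \ref{crit3}.}

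The plan is to close a cycle of implications among [\textbf{z}\ref{z1}]--[\textbf{z}\ref{z4}] by feeding the already-established results of Theorems \ref{ThHol} and \ref{th:3} into one another. First I would record the trivial implications [\textbf{z}\ref{z2}]$\Rightarrow$[\textbf{z}\ref{z3}]$\Rightarrow$[\textbf{z}\ref{z4}]: the first is exactly the implication [{\sf Z}\ref{ZI}]$\Rightarrow$[{\sf Z}\ref{ZII}] from Theorem \ref{ThHol}, applied with $\Zero_f$ replaced by the counting measure $n_{\sf Z}$ (note that over $\CC$ the counting measure is carried by isolated points, so $\int v\dd n_{\sf Z}=\sum_{{\sf z}_k}v({\sf z}_k)$), and in fact it follows verbatim from the estimate $\int_{S_o^{\cup(4r)}\setminus S_o}|v|\dd|\varDelta_M|\le\max\{b_+,-b_-\}\,|\varDelta_M|(S_o^{\cup(4r)}\setminus S_o)$ used in that proof. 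The implication [\textbf{z}\ref{z3}]$\Rightarrow$[\textbf{z}\ref{z4}] is pure specialization, since for any fixed admissible $S_o,r,b_\pm$ one has the inclusion $\sbh_{00}(D\!\setminus\!S_o; r,b_-< b_+)\cap C^\infty(D\!\setminus\!S_o)\subset\sbh_{+0}^{\uparrow}(D\!\setminus\!S_o; r, b_-< b_+)$ noted in the Comments to Theorem \ref{crit2}.

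Next I would establish [\textbf{z}\ref{z4}]$\Rightarrow$[\textbf{z}\ref{z1}]. This is precisely Theorem \ref{th:3}: hypothesis [\textbf{z}\ref{z4}] is \eqref{in:HZb+Z}, so Theorem \ref{th:3} produces a real number $c<\#\Conn\partial D-1$ and a function $f\in\Hol(D)$ with $\Zero_f={\sf Z}$ satisfying \eqref{fM}. Here the standing hypothesis of Theorem \ref{crit3}---that $D$ is simply connected with $\#\partial D>1$, or else $\CC_\infty\!\setminus\!\clos D\ne\varnothing$---is what forces $c=0$: in the first case $\#\Conn\partial D=1$ so $c<0$ hence $c^+=0$, and in the second case $\clos D\ne\CC_\infty$ and Theorem \ref{th:3} (via the last clause of Lemma \ref{lem:har}) lets us take $c:=0$ directly. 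Thus \eqref{fM} becomes $\ln|f(z)|\le M(z)$ on $D$, i.e. $|f|\le\exp M$, which is [\textbf{z}\ref{z1}].

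Finally I would close the cycle with [\textbf{z}\ref{z1}]$\Rightarrow$[\textbf{z}\ref{z2}]. Given $f\in\Hol(D)$ with $\Zero_f={\sf Z}$ and $|f|\le\exp M$ on $D$, apply Theorem \ref{ThHol}, [{\sf Z}\ref{ZI}], with this $f$: for every admissible connected $S_o$ and all $r,b_\pm$ from \eqref{bbpmr} there is $C\in\RR$ with
\begin{equation*}
\int_{D\!\setminus\!S_o} v\,\Zero_f\dd\varkappa_{2n-2}\le\int_{D\!\setminus\!S_o^{\cup(4r)}}v\dd\varDelta_M+\int_{S_o^{\cup(4r)}\!\setminus\!S_o}(-v)\dd\varDelta_{M_-}+C
\end{equation*}
for each $v\in\sbh_{+0}^{\uparrow}(D\!\setminus\!S_o;\circ r,b_-< b_+)$. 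Since $n=1$ here, $\varkappa_{2n-2}=\varkappa_0$ is counting measure and $\Zero_f$ is supported on the isolated points ${\sf z}_k$, so the left side equals $\sum_{{\sf z}_k\in D\setminus S_o}v({\sf z}_k)$; this is exactly the inequality in [\textbf{z}\ref{z2}]. The only point requiring a word of care is that Theorem \ref{ThHol} is stated for $M$ of the form \eqref{M} with $M_+\in C(D)$, which is guaranteed by the hypothesis of Theorem \ref{th:3} carried into Theorem \ref{crit3}, so all integrals (interpreted as upper integrals where subdivisors or infinite-valued integrands appear) are legitimate. The main --- indeed only --- obstacle is bookkeeping: verifying that the ``intersection'' of the two source theorems really does produce a single cycle $[\textbf{z}\ref{z1}]\Rightarrow[\textbf{z}\ref{z2}]\Rightarrow[\textbf{z}\ref{z3}]\Rightarrow[\textbf{z}\ref{z4}]\Rightarrow[\textbf{z}\ref{z1}]$ with all the quantifiers on $S_o,r,b_\pm$ matching up, and that the connectivity hypothesis is used exactly once, to kill the term $c^+\ln(1+|z|)$ in \eqref{fM}.
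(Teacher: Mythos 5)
Your proposal is correct and follows essentially the same route as the paper: the chain [z\ref{z1}]$\Rightarrow$[z\ref{z2}]$\Rightarrow$[z\ref{z3}]$\Rightarrow$[z\ref{z4}] is obtained from Theorem \ref{ThHol} (with the implication [{\sf Z}\ref{ZI}]$\Rightarrow$[{\sf Z}\ref{ZII}]) together with the inclusions of Proposition \ref{pr:12}, and [z\ref{z4}]$\Rightarrow$[z\ref{z1}] is Theorem \ref{th:3} with $c=0$ forced by the connectivity hypothesis on $D$. Your extra bookkeeping (identifying $\varkappa_0$ with counting measure, and pinpointing where $c^+\ln(1+|z|)$ is killed) is exactly the content the paper leaves implicit.
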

\begin{proof} The implications   
{\rm [z\ref{z1}]}$\Rightarrow${\rm [z\ref{z2}]}$\Rightarrow${\rm [z\ref{z3}]}$\Rightarrow${\rm [z\ref{z4}]}  follows from Theorem \ref{ThHol} with   implication {\sf [Z\ref{ZI}]}$\Rightarrow${\sf [Z\ref{ZII}]} and Proposition \ref{pr:12}. The implications   {\rm [z\ref{z4}]}$\Rightarrow${\rm [z\ref{z1}]}
follow from Theorem \ref{th:3}, where  $c=0$ in \eqref{fM} according to the properties of the domain $D$.
\end{proof}

\begin{remark} A special case of Theorem  \ref{crit3} was announced in  \cite[Theorem 2]{MenKha19}. 
Besides, the works \cite[Main Theorem, Theorems 1--3]{KhaRoz18}, 
\cite[Theorems 2,4,5]{KhaKha19} contain a wide range of necessary or sufficient conditions under which there exists a function $f\in \Hol_*(D)$  that \textit{vanishes on\/} ${\sf Z}$  and satisfies the inequality $|f|\leq \exp M$ on $D$.  These results do not follow directly from  Theorem \ref{crit3},  but a significant part of these conditions follows from Theorems \ref{crit1} and \ref{ThHol}, partly in stronger forms. 
\end{remark}

\section{Gluing Theorems}\label{GT}
\setcounter{equation}{0} 

\begin{gluingtheorem}[{\rm \cite[Theorem 2.4.5]{R},\cite[Corollary 2.4.4]{Klimek}}]\label{gl:th1}
Let $\mathcal O$ be an open set in $\RR^d$, 
and let $\mathcal O_0$ be a  subset of ${\mathcal O}$. If $u\in \sbh({\mathcal O})$, $u_0\in \sbh ({\mathcal O}_0)$, 
and 
\begin{equation}\label{Uu}
\limsup_{\mathcal O_0\ni x'\to x}u_0(x')\leqslant u(x) \quad\text{for each $x\in {\mathcal O}\cap \partial {\mathcal O}_0$}, 
\end{equation}
then the formula 
\begin{equation}\label{gU}
U:=\begin{cases}
\max\{u,u_0\} &\text{ on ${\mathcal O}_0$},\\
 u &\text{ on ${\mathcal O}\!\setminus\!{\mathcal O}_0$}
\end{cases}
\end{equation}
defines a subharmonic function on ${\mathcal O}$.
\end{gluingtheorem}
\begin{gluingtheorem}\label{gl:th2}
Let $O, O_0$ be a pair of open subsets in $\RR^d$,  
and  $v\in \sbh(O)$, $ v_0\in \sbh(O_0)$ be a pair of  functions such that 
\begin{subequations}\label{g01}
\begin{align}
\limsup_{\stackrel{x'\to x}{x'\in O_0\cap O}} v(x')&\leqslant 
v_0(x) \quad\text{for each $x\in O_0\cap \partial O$},
\tag{\ref{g01}$_0$}\label{g010}
\\
\limsup_{\stackrel{x'\to x}{x'\in O_0\cap O}} v_0(x')&\leqslant v(x) \quad\text{for each $x\in O\cap \partial O_0$}.
\tag{\ref{g01}$_1$}\label{g011}
\end{align}
\end{subequations}
Then the function 
\begin{equation}\label{Vv}
V:=\begin{cases}
v_0&\text{ on $O_0\!\setminus\!O$},\\
\max \{v_0,v\}\leq v_0^++v^+&\text{ on $O_0\cap O$},\\
v&\text{ on $O\!\setminus\!O_0$,}
\end{cases}
\end{equation}
is subharmonic on $O_0\cup O$.
\end{gluingtheorem}
\begin{proof} It is enough to apply the  Gluing Theorem  \ref{gl:th1} twice:
\begin{enumerate}
\item[0)] to one pair of functions  
\begin{subequations}\label{O0}
\begin{align*}
u&:=v_0\in \sbh(O_0), \quad{\mathcal O}:=O_0;
\\
u_0&:=v\bigm|_{O\cap O_0}\in \sbh(O\cap O_0) , 
\quad {\mathcal O}_0:=O\cap O_0\subset O_0, 
\end{align*}
\end{subequations} 
under condition \eqref{g010}, which corresponds to condition \eqref{Uu};
\item[1)] 
to another pair of functions 
\begin{subequations}\label{OO}
\begin{align*}
u&:=v\in \sbh(O), \quad {\mathcal O}:=O;
\\
u_0&:=v_0\bigm|_{ O_0\cap O}\in \sbh(O_0\cap O) , 
\quad {\mathcal O}_0:= O_0\cap O\subset O, 
\end{align*}
\end{subequations} 
under condition \eqref{g011}, which corresponds to condition \eqref{Uu}.
\end{enumerate}
These two glued subharmonic functions coincide at the open intersection
$O\cap O_0$, and we obtain subharmonic function $V$ on $O_0\cup O$ defined by \eqref{Vv}.
\end{proof}

\begin{gluingtheorem}[{\rm quantitative version}]\label{gl:th3}
Let $O$ and $O_0$ be a pair of open subsets in $\RR^d$, and 
$v\in \sbh(O)$ and    $g\in \sbh(O_0)$ be a pair of  functions such that 
\begin{subequations}\label{g01v}
\begin{align}
-\infty <m_v\leqslant & \inf_{x\in O\cap \partial O_0} v(x), 
\tag{\ref{g01v}m}\label{g01vOm}
\\ 
 \sup_{x\in O_0\cap \partial O}\limsup_{\stackrel{x'\to x}{x'\in O_0\cap O}} v(x')&\leqslant M_v<+\infty, 
\tag{\ref{g01v}M}\label{g01vOM}
\\
-\infty < \sup_{x\in O\cap \partial O_0}
\limsup_{\stackrel{x'\to x}{x'\in O\cap O_0}} g(x')\leqslant m_g&
< M_g\leqslant  \inf_{x\in O_0\cap \partial O} g(x) <+\infty.
\tag{\ref{g01v}g}\label{g01g}
\end{align}
\end{subequations}
If we choose 
\begin{equation}\label{v0g}
v_0:=\frac{M_v^++m_v^-}{M_g-m_g} (2g-M_g-m_g)\in \sbh(O_0), 
\end{equation} 
then the function $V$ from \eqref{Vv} is subharmonic on $O_0\cup O$. 
\end{gluingtheorem}
\begin{proof}
The function $v_0$ from definition \eqref{v0g} is subharmonic on $O_0$, since this function $v_0$  has a form $\const^+g+\const$ with constants $\const^+\in \RR^+$, $\const \in \RR$. 
In addition, by construction \eqref{v0g}, for each $x\in O_0\cap \partial O$, we obtain
\begin{multline*}
\limsup_{\stackrel{y\to x}{y\in O_0\cap O}} v(y)\overset{\eqref{g01vOm}-\eqref{g01vOM}}{\leqslant}
 M_v^++m_v^-= \frac{M_v^++m_v^-}{M_g-m_g} (2M_g-M_g-m_g)\\
 \overset{\eqref{g01g}}{\leqslant}\frac{M_v^++m_v^-}{M_g-m_g} \Bigl(2 \inf_{x\in O_0\cap \partial O} g(x)  -M_g-m_g\Bigr)
=\inf_{x\in O_0\cap \partial O} \frac{M_v^++m_v^-}{M_g-m_g} \Bigl(2  g(x)  -M_g-m_g\Bigr)\\
\overset{\eqref{v0g}}{=} \inf_{O_0\cap\partial O} v_0\leqslant 
v_0(x),  
 \quad\text{for each $x \in O_0\cap \partial O$.}
\end{multline*} 
Thus,  we have  \eqref{g010}. 
Besides, by construction \eqref{v0g}, for each $x\in O\cap \partial O_0$, we obtain
\begin{multline*}
\limsup_{\stackrel{x'\to x}{x'\in O_0\cap O}} v_0(x')
\overset{\eqref{v0g}}{\leqslant}
\frac{M_v^++m_v^-}{M_g-m_g} \biggl(2\limsup_{\stackrel{x'\to x}{x'\in O_0\cap O}} g(x')-M_g-m_g\biggr)
\\
\overset{\eqref{g01g}}{\leqslant}
\frac{M_v^++m_v^-}{M_g-m_g} (2m_g-M_g-m_g)
=-(M_v^++m_v^-)\leqslant  -m_v^-\leqslant m_v
\\
\overset{\eqref{g01vOm}}{\leqslant}
 \inf_{x\in O\cap \partial O_0} v(x)\leqslant v(x) 
 \quad\text{for each  $ x\in  O\cap \partial O_0$.}
\end{multline*} 
Thus,  we have  \eqref{g011}, and  Gluing Theorem \ref{gl:th3} follows from 
Gluing Theorem \ref{gl:th2}. 
\end{proof}

\begin{remark} 
Theorems of this section can be easily transferred to the cone of plurisubharmonic functions
\cite[Corollary 2.9.15]{Klimek}.
We sought to formulate our theorems and their proofs with the possibility of their fast transport to plurisubharmonic functions and to abstract potential theories with  more general constructions based on the theories of harmonic spaces and sheaves 
 \cite{BB66},  \cite{ConCor}, \cite{BB80}, \cite{BBC81}, \cite{BH}, \cite{LNMS}, \cite{AL},   etc.
\end{remark}

\section{Gluing with Green's Function}\label{GTg}
\setcounter{equation}{0} 

Recall that a set $E\subset \RR^d$ is called {\it polar\/} if there is $u\in \sbh_*(\RR^d)$ such that 
\begin{equation*}
\Bigl(E\subset (-\infty)_u:=\{ x\in \RR^d\colon u(x)=-\infty \} \Bigr)
\Longleftrightarrow \Bigl(\text{Cap}^* E=0\Bigr),
\end{equation*}
where the set $(-\infty)_u$ is {\it minus-infinity\/} $G_{\delta}$-set for the function 
$u$,  and  
\begin{equation*}
\text{Cap}^*(E):=
\inf_{E\subset O=\Int O}  
\sup_{\stackrel{C=\clos C\Subset O}{\mu\in \Meas^{1+}(C)}} 
 k_{d-2}^{-1}\left(\iint K_{d-2} (x,y)\dd \mu (x) \dd \mu(y) \right)
\end{equation*}
is the {\it outer capacity\/} of $E\subset \RR^d$ \cite{R}, \cite{HK}, \cite{Helms}, \cite{Doob}, \cite{Landkoff}.   

Let $\mathcal O$ be an \textit{open  proper} subset in $\RR_{\infty}^d$. 
Consider a point $o\in \RR^d$ and subsets $S_o, S\subset \RR_{\infty}^d $  such that  
\begin{equation}\label{x0S}
\RR^d \ni o \in \Int S_o\subset S_o\Subset S \subset \Int \mathcal O
=\mathcal O\subset \RR^d_{\infty}\neq \mathcal O. 
\end{equation} 
Let $D$ be a \textit{domain\/} in $\RR_{\infty}^d$ with non-polar boundary $\partial D$ such that
\begin{equation}\label{Dg}
 o\overset{\eqref{x0S}}{\in} \Int  S_o\subset S_o\Subset D \Subset S \subset \mathcal O.
\end{equation}
Such domain $D$ possesses the extended  Green's function $g_D (\cdot, o)$   
with pole at  $o\in D$ (see Example \ref{expJP}, \eqref{g})  with the following properties:  
\begin{subequations}\label{gD}
\begin{align}
g_D(\cdot , o)&\in \sbh^+ \bigl(\RR_{\infty}^d\!\setminus\!o\bigr)\subset \sbh^+\bigl(\mathcal O\!\setminus\!o\bigr) , 
\tag{\ref{gD}s}\label{gDs}\\
g_D(\cdot ,o)&= 0\text{ on $\RR_{\infty}^d\!\setminus\!\clos D \supset \mathcal O\!\setminus\!\clos D\supset \mathcal O\!\setminus\!S$}, 
\tag{\ref{gD}$_0$}\label{gD0}\\
g_D(\cdot , o)&\in \har \bigl(D\!\setminus\!o\bigr)\subset 
\har\bigl(S_o\setminus o\bigr)\subset \har\bigl(B(o,r_o)\!\setminus\!o\bigr)
\tag{\ref{gD}h}\label{gDh}\\
\intertext{for a number $r_o\in \RR^+\!\setminus\!0$, $g_D(o,o):=+\infty$,}
g_D(x,o)&\overset{\eqref{{kK}K}}{=}-K_{d-2}(x,o)+O(1) \quad\text{as $o\neq x\to o$}.
\tag{\ref{gD}o}\label{gD0a}\\
\intertext{Besides, the following strictly positive number}
0<M_g&:=\inf_{x\in \partial S_o} g_D (x,o)=
\const^+_{o, S_o, S}, 
 \tag{\ref{gD}M}\label{Mg}\\
\intertext{depends only on $o,S_o,S,D,d$, and, by the minimum principle, we have}
g_D(x,o)-M_g&\overset{\eqref{Mg}}{\geqslant} 0\quad\text{for each  $x\in S_o\!\setminus\!o$},
\tag{\ref{gD}M+}\label{Mg+}
\end{align}
\end{subequations}
where by $\const_{a_1,a_2,\dots}\in \RR$ and $\const_{a_1,a_2,\dots}^+\in \RR^+$ we denote constants and constant functions, in general, depend on $a_1,a_2,\dots$ and, unless otherwise specified, only on them, but the dependence on dimension $d$ of $\RR_{\infty}^d$, a domain $D$, and open sets $O$ or $\mathcal O$  will be not specified and not discussed.
Properties \eqref{gD} for the extended Green's function 
$g_D (\cdot, o)$\/   are well known under conditions  \eqref{x0S}--\eqref{Dg}
\cite[4.4]{R}, \cite[5.7]{HK}.

\begin{gluingtheorem}\label{gl:th4}
Under conditions  \eqref{x0S},
suppose that a function $v \in \sbh(\mathcal O\!\setminus\!S_o)$ satisfies constraints from above and from below  in the form
\begin{equation}\label{vabS}
-\infty<m_v\overset{\eqref{g01vOm}}{\leqslant} \inf_{S\!\setminus\!S_o} v\leqslant  
\sup_{S\!\setminus\! S_o} v\overset{\eqref{g01vOM}}{\leqslant}  M_v<+\infty. 
\end{equation}
Every domain $D$ with inclusions  \eqref{Dg} possesses  the extended Green's function $g_D(\cdot , o)$ with pole $o\in \Int S_o$, properties  \eqref{gD} and the  constant $M_g$ of   \eqref{Mg} such that for the function  
\begin{subequations}\label{gDV}
\begin{align}
v_o&\overset{\eqref{v0g}}{:=}\frac{M_v^++m_v^-}{M_g} \bigl(2g_D(\cdot , o) -M_g\bigr)\in 
\sbh\bigl(\RR_{\infty}^d\!\setminus\!o\bigr)\subset 
\sbh\bigl(\Int S\!\setminus\!o\bigr),
\tag{\ref{gDV}v}\label{gDVv}
\\
\intertext{we can to define a subharmonic function}
V&\overset{\eqref{Vv}}{:=}\begin{cases}
v_o&\text{ on $S_o$}\\
\sup \{v_o,v\}\leq v_o^++v^+&\text{ on $S\!\setminus\!S_o$}\\
v&\text{ on $\mathcal O\!\setminus\!S$}
\end{cases} \quad \in \sbh_*(\mathcal O\!\setminus\!o)
\tag{\ref{gDV}V}\label{gDVV}\\
\intertext{on $\mathcal O\!\setminus\!o$ satisfying the conditions}
 V&\overset{\eqref{gDh}}{\in} \har \bigl(S_o\!\setminus\!o\bigr)\subset \har\bigl(B(o,r_o)\!\setminus\!o\bigr)
\quad\text{for a number $r_o\in \RR^+\setminus o$},
\tag{\ref{gDV}h}\label{gDhV}\\
v(x)&\overset{\eqref{gDVV}}{\leqslant} V(x)\overset{\eqref{gDVv}}{\leqslant}
 M_v^++ 2\frac{M_v^++m_v^-}{M_g} g_{D} (x,o)
	 \quad\text{for each $x\in  S\!\setminus\!S_o$,}
\tag{\ref{gDV}+}\label{gDhV+}\\
0& \leqslant V(x)\overset{\eqref{Mg+}}{\leqslant}
 2\frac{M_v^++m_v^-}{M_g} g_{D} (x,o)
\quad\text{for each $x\in  S_o\!\setminus\!o$,}
\tag{\ref{gDV}$_0^+$}\label{gDhV++}\\
V(x)&\overset{\eqref{gD0a}}{=}-2\frac{M_v^++m_v^-}{M_g} K_{d-2} (x,o)+O(1) \quad\text{as $o\neq x\to o$}.
\tag{\ref{gDV}o}\label{gD0aV}
\end{align}
\end{subequations}
\end{gluingtheorem}
\begin{proof} It is enough to apply Gluing Theorem \ref{gl:th3}
 with 
\begin{equation*}
O:=\mathcal O\!\setminus\!\clos S_o, 
\quad O_0:=\Int S\!\setminus\!o, \quad 
g:=g_D(\cdot,o), \quad  m_g:=0 
\end{equation*} 
in accordance with the reference marks indicated over relationships in 
 \eqref{vabS}--\eqref{gDV}.
\end{proof}

\begin{remark}\label{MgS} The choice of  $D$ and $M_g$ in \eqref{Mg} and \eqref{gD0aV} is entirely determined by the mutual arrangement of  $S_o\Subset S$.
\end{remark}

\begin{gluingtheorem}\label{gl:th_es}
Let $\mathcal O\subset \RR^d$ be an open subset, and  $S_o\subset \RR^d$ be a connected set such that there is a point
\begin{equation}\label{S0}
 o\overset{\eqref{x0S}}{\in} \Int  S_o\subset S_o \Subset \mathcal O.
\end{equation}
Let  $r\in \RR^+$ be a  number such that
\begin{equation}\label{posr}
0<4r<\dist(S_o, \partial \mathcal O),
\end{equation}
 $D_r:=D_r'$ be a domain from Proposition\/ {\rm \ref{llemmaD}} satisfying \eqref{SDS} with $2r$ instead of $r$ and $r':=3r>2r$, $v \in \sbh_*(\mathcal O\!\setminus\!S_o)$, and $M_v\in \RR$ be a constant  such that 
\begin{subequations}\label{avv}
\begin{align}  
v&\leq M_v <+\infty \quad\text{on $S_o^{\cup (4r)}\!\setminus\!S_o$},
\tag{\ref{avv}M}\label{avvM}
\\
m_v&:=\inf \bigl\{ v^{\circ r}(x)\colon x\in S_o^{\cup (3r)}\!\setminus\!S_o^{\cup (2r)}\bigr\}.
\tag{\ref{avv}m}\label{avvm},\\
M_g&:=\inf_{x\in \partial S_o^{\cup (2r)}} g_{D_r} (x,o)=
\const^+_{o, S_o, r,{D_r}}=\const^+_{o,S_o,r}.
\tag{\ref{avv}g}\label{gDVg}
\end{align}
\end{subequations}
Then $M_g>0$,  $m_v>-\infty $, and   there is a subharmonic function $V\in \sbh_*(\mathcal{O}\!\setminus\!o) $ satisfying conditions \eqref{gDhV}--\eqref{gD0aV}, i.e., 
\begin{subequations}\label{VK}
\begin{align}
0<V&\in \har^+(S_o\!\setminus\!o) \quad\text{on $S_o\!\setminus\!o$},
\tag{\ref{VK}h}\label{gDVh}\\
V&=v \quad \text{on $\mathcal O\!\setminus\!S_o^{\cup (4r)}$},
\tag{\ref{VK}=}\label{gDV=}\\
v(x)\leqslant V(x)&\leqslant M_v^++2\frac{M_v^++m_v^-}{M_g} g_{D_r}(x,o) \quad \text{for each $x\in  S^{\cup (4r)}\!\setminus\!S_o$},
\tag{\ref{VK}+}\label{gDVleq}
\\
0< V(x)&\leqslant 2\frac{M_v^++m_v^-}{M_g} g_{D_r}(x,o) \quad \text{for each $x\in  S_o\!\setminus\!o$},
\tag{\ref{VK}$_0^+$}\label{gDVleq+}
\\
V(x)&\overset{\eqref{gD0aV}}{=}- 2\frac{M_v^++m_v^-}{M_g}K_{d-2} (x,o)+O(1) \quad\text{as\quad  $o\neq x\to o$}.
\tag{\ref{VK}o}\label{gDVo}
\end{align}
\end{subequations}

\end{gluingtheorem}
\begin{proof}
 We have $M_g>0$ for \eqref{gDVg} by \eqref{Mg}--\eqref{Mg+}, and  $m_v>-\infty$ since the function  
$v^{\circ r}$  is continuous on $\clos (S_o^{\cup (3r)}\!\setminus\!S_o^{\cup (2r)})$
 \cite[Theorem 1.14]{Helms}. 
Using the Perron\,--\,Wiener\,--\,Brelot method \cite[Ch.~4]{R}, \cite[2.6]{HK}, \cite[Ch.~8]{Helms}, \cite[Ch.~VIII, 2]{Doob}, we construct the function 
\begin{equation*}
\check{v}:=\sup\Bigl\{w\in \sbh_*(\mathcal O \!\setminus\!S_o) \colon  
w\leq v \text{ on } (\mathcal O \!\setminus\!S_o)\!\setminus\!\bigl(S_o^{\cup (4r)}\!\setminus\!\clos S_o^{\cup r}\bigr) \Bigr\}
\end{equation*}
and its upper regularization 
 \begin{equation*}
\check{v}^*(x):=\limsup_{x'\to x}\check v(x'), 
\quad x\in \Int (\mathcal O\!\setminus\!S_o). 
\end{equation*}
By this construction, the new function $\check{v}^*$ is subharmonic on  $\Int(\mathcal O\!\setminus\! S_o)$, harmonic on  $S_o^{\cup (4r)}\!\setminus\!\clos S_o^{\cup r}$
and $\check{v}^*=v$ on $\mathcal O\!\setminus\!S_o^{\cup (4r)}$.
It follows from the principle of subordination (domination) for harmonic continuations
and  the  maximum principle that 
\begin{equation}\label{estwv}
-\infty <m_v\leq \check{v}^*\quad\text{on $S_o^{\cup (3r)}\!\setminus\!S_o^{\cup (2r)}$}, 
\qquad \check{v}^*\leq M_v\quad\text{on $S_o^{\cup (4r)}\!\setminus\!\clos S_o$}.
\end{equation}
In Gluing  Theorem \ref{gl:th4}, we choose the set  $S_o^{\cup(2r)}$ as $S_o$, $S_o^{\cup (3r)}$ as  $S$, and $\check{v}^*$ as $v$. We have  \eqref{vabS}
for $\check{v}^*$ in view of \eqref{estwv}. 
Then, by construction \eqref{gDVv}--\eqref{gDVV} and conditions \eqref{gDhV}--\eqref{gD0aV}, we get series of conclusions \eqref{VK} of Gluing Theorem \ref{gl:th_es} with $S_o^{\cup(2r)}$ instead of $S_o$. But we can replace $S^{\cup(2r)}$ with $S_o$ back in estimates \eqref{gDVleq}--\eqref{gDVleq+} by virtue of condition \eqref{avvM}, as well as in \eqref{gDVh} since $S_o\subset S^{\cup (2r)}$. The possibility of replacing a constant $\const^+_{o, S_o, r,{D_r}}$ with $\const^+_{o,S_o,r}$ follows from Remark \ref{MgS}. 
\end{proof}

\section{Gluing of test functions}\label{tsf}
\setcounter{equation}{0}

\begin{proposition}[{\rm \cite[3.2.1]{KhaRoz18}}]\label{pr:0} A function $v\overset{\eqref{s0}}{\in} \sbh_{+0}(D\!\setminus\!S_o)$ continues as subharmonic function on $\RR^d_{\infty}\!\setminus\!S_o$ by the  rule
\begin{equation}\label{pr:0v}
v(x):=\begin{cases}
v(x)&\quad\text{at $x\in D\!\setminus\!S_o$},\\ 
0&\quad\text{at $x\in \RR^d_{\infty}\!\setminus\! D$}
\end{cases}
\quad \in \sbh(\RR^d_{\infty}\!\setminus\!S_o).
\end{equation}
\end{proposition}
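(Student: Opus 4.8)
The plan is to verify that the piecewise definition \eqref{pr:0v} satisfies the hypotheses of Gluing Theorem \ref{gl:th1}, with the ambient open set taken to be $\mathcal O := \RR^d_\infty\!\setminus\!S_o$ and the distinguished subset taken to be $\mathcal O_0 := D\!\setminus\!S_o$. Note $\mathcal O_0$ is indeed a subset of $\mathcal O$ since $S_o\Subset D$, so removing $S_o$ from both leaves $D\!\setminus\!S_o\subset \RR^d_\infty\!\setminus\!S_o$. The function we want to glue on is $v$ itself on $\mathcal O_0 = D\!\setminus\!S_o$, while the ``background'' function on all of $\mathcal O$ is the constant $0$, which is subharmonic (in fact harmonic). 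With this identification, formula \eqref{gU} becomes $U = \max\{0, v\} = v^+$ on $D\!\setminus\!S_o$ and $U = 0$ on $\mathcal O\!\setminus\!\mathcal O_0 = (\RR^d_\infty\!\setminus\!S_o)\!\setminus\!(D\!\setminus\!S_o) = \RR^d_\infty\!\setminus\!D$. But by the definition \eqref{s0} of $\sbh_{+0}(D\!\setminus\!S_o) = \sbh_0(D\!\setminus\!S_o)\cap \sbh_+(D\!\setminus\!S_o)$, we have $v\ge 0$ on $D\!\setminus\!S(v)$ for some $S(v)\Subset D$; shrinking if necessary we may assume $S_o\subset S(v)$, so on the part of $D$ outside a compact set $v$ is already nonnegative and $v^+ = v$ there, hence the glued function $U$ agrees with the extension described in \eqref{pr:0v} on a neighborhood of $\partial D$. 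A small additional remark reconciles $U = v^+$ with the claimed $U = v$ on all of $D\!\setminus\!S_o$: one should note that since $v\in\sbh_0(D\!\setminus\!S_o)$ vanishes continuously at $\partial D$ and the extension is what is actually asserted to be subharmonic, it suffices to produce \emph{some} subharmonic extension agreeing with $v$ near $\partial D$ and with $0$ outside $D$, whence the usual removable-singularity / maximum-principle argument identifies it; alternatively, and more cleanly, one glues $v^+$ (which is subharmonic on $D\!\setminus\!S_o$, being the max of two subharmonic functions) and then observes that where $v^+\ne v$, namely on the precompact set $S(v)\!\setminus\!S_o$, the original $v$ is itself subharmonic, so local subharmonicity of the extension at every point of $\RR^d_\infty\!\setminus\!S_o$ follows.

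The key step is to check the boundary inequality \eqref{Uu}, which in our setup reads
\begin{equation*}
\limsup_{(D\setminus S_o)\ni x'\to x} v(x') \leqslant 0 \qquad\text{for each } x\in \mathcal O\cap \partial \mathcal O_0.
\end{equation*}
The boundary $\partial \mathcal O_0$ of $D\!\setminus\!S_o$ inside $\RR^d_\infty\!\setminus\!S_o$ is exactly $\partial D$ (the inner boundary $\partial S_o$ has been excised along with $S_o$, since $S_o\Subset D$). So the condition to verify is precisely
\begin{equation*}
\limsup_{D\ni x'\to x} v(x') \leqslant 0 \qquad\text{for each } x\in \partial D,
\end{equation*}
and this is nothing but the defining property of $\sbh_0(D\!\setminus\!S_o)$ in \eqref{{s0}0}, where the limit is in fact asserted to equal $0$. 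Hence the hypothesis of Gluing Theorem \ref{gl:th1} holds, and the theorem delivers that $U$ is subharmonic on $\mathcal O = \RR^d_\infty\!\setminus\!S_o$.

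The main (and only genuine) obstacle is the cosmetic discrepancy between the glued function $v^+$ produced by Gluing Theorem \ref{gl:th1} and the function $v$ written in \eqref{pr:0v}: one must argue that replacing $v^+$ by $v$ on the precompact region $\{x\in D\!\setminus\!S_o : v(x)<0\}\subset S(v)\!\setminus\!S_o$ preserves subharmonicity. This is immediate because on that region $v$ itself is a genuine subharmonic function on an open neighborhood, and subharmonicity is a local property; away from that region the two functions coincide. I expect everything else to be routine bookkeeping of which boundary is which once the identification $\mathcal O = \RR^d_\infty\!\setminus\!S_o$, $\mathcal O_0 = D\!\setminus\!S_o$, $u\equiv 0$, $u_0 = v$ is fixed.
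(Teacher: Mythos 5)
Your argument is sound, and the one genuine subtlety — that Gluing Theorem \ref{gl:th1} with $u\equiv 0$ produces $\max\{0,v\}=v^+$ on $D\!\setminus\!S_o$ rather than $v$ — is correctly identified and correctly repaired by locality: the two extensions differ only on $\{v<0\}\subset S(v)\!\setminus\!S_o\Subset D$, where $v$ is already subharmonic on an open neighbourhood, while near $\partial D$ and outside $D$ they coincide because $v=v^+\geq 0$ there by the definition \eqref{s0} of $\sbh_+(D\!\setminus\!S_o)$. The paper itself gives no proof of Proposition \ref{pr:0}, deferring to \cite[3.2.1]{KhaRoz18}; the standard direct route is to verify upper semicontinuity and the sub-mean-value inequality at each $x\in\partial D$ by hand (both are immediate from $v\to 0$ at $\partial D$ and $v\geq 0$ near $\partial D$), whereas you route the verification through the already-available Gluing Theorem \ref{gl:th1} — a legitimate alternative that fits the machinery of Section \ref{GT}, at the cost of the extra $v^+$-versus-$v$ reconciliation that the direct check avoids. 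Two technical points deserve a sentence in a polished write-up: (i) for $\mathcal O:=\RR^d_{\infty}\!\setminus\!S_o$ to be open and for $\mathcal O\cap\partial\mathcal O_0$ to reduce to $\partial D$ you need $S_o$ closed (otherwise points of $\clos S_o\!\setminus\!S_o$ enter $\mathcal O\cap\partial\mathcal O_0$, where \eqref{Uu} may fail since $v$ need not be small near $S_o$); this is implicit in the paper's use of $S_o\Subset D$ but should be stated; (ii) Gluing Theorem \ref{gl:th1} is formulated for open subsets of $\RR^d$, so subharmonicity at $\infty$ (relevant when $D$ is unbounded or when one genuinely works in $\RR^d_{\infty}$) requires the separate standard remark that the extension is positive near $\partial D\ni\infty$ with limit $0$, respectively identically $0$ on $\RR^d_{\infty}\!\setminus\!\clos D$.
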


\begin{gluingtheorem}[{\rm for test functions}]\label{glth5}
If  $D$ is a domain together with  \eqref{S0seto}, and $b_{\pm}, r$ are constants satisfying \eqref{bbpmr}, then there is a constant
\begin{equation}\label{gDVgv}
B:=2\frac{b_+-b_-}{\const^+_{o,S_o,r}}:=\const^+_{o,S_o,r,b_{\pm}}>0.
\end{equation}
such that for any test subharmonic function $v\in \sbh_{+0} (D\!\setminus\!S_o; \circ r,b_-< b_+)$ 
we can construct a subharmonic  function $V\in \sbh_*(\RR_{\infty}^d\!\setminus\!o)$ with properties  
\begin{subequations}\label{VKv}
\begin{align}
0<V&\overset{\eqref{gDVh}}{\in} \har^+(S_o\!\setminus\!o) \quad\text{on $S_o\!\setminus\!o$},
\tag{\ref{VKv}h}\label{gDVhv}\\
V&\overset{\eqref{gDV=}}{=}v \quad \text{on $D\!\setminus\!S^{\cup (4r)}$}, 
\tag{\ref{VKv}=}\label{gDV=v}
\\
v(x)\leqslant V(x)&\overset{\eqref{gDVleq}}{\leqslant} b_++ Bg_D(x,o) \quad \text{for each  $x\in  S^{\cup (4r)}\!\setminus\!S_o$},
\tag{\ref{VKv}+}\label{gDVleqv}
\\
0< V(x)&\overset{\eqref{gDVleq+}}{\leqslant} Bg_D(x,o) \quad \text{for each  $x\in  S_o\!\setminus\!o$},
\tag{\ref{VKv}$^+_0$}\label{gDVleqv+}
\\	
V(x)&\overset{\eqref{gDVo}}{=}-BK_{d-2} (x,o)+O(1) \quad\text{as $o\neq x\to o$}
\tag{\ref{VKv}o}\label{gDVov},
\\
V&\equiv 0 \quad\text{on $\RR^d_{\infty}\!\setminus\!D$.}
\tag{\ref{VKv}$_0$}\label{gDV=0}
\end{align}
\end{subequations}
Besides, for each  $v\in  \sbh_{+0}^{\uparrow}(D\!\setminus\!S_o; \circ r,b_-< b_+)$ we get  a function $V\colon  \RR_{\infty}^d\!\setminus\!o \to \overline \RR$  with the same properties  \eqref{gDVhv}--\eqref{gDVov}
as the limit of an increasing sequence of functions satisfying the conditions \eqref{gDVhv}--\eqref{gDVov}, but with a weaker property instead of \eqref{gDV=0}, more precisely 
\begin{equation}\label{vuparr}
V\equiv 0 \quad\text{on $\RR^d_{\infty}\!\setminus\!\clos D$, \quad   $V\geq 0$ on $\partial D$,}
\tag{\ref{VKv}$'_0$}
\end{equation}
and such function $V$ is not necessarily upper semi-continuous on 
$(\clos D)\!\setminus\!S_o$.

This is also true for every positive function
\begin{equation*}
v\in \sbh_0^+(D\!\setminus\!S_o; \leq b_+)\quad\text{or}\quad  v\in \sbh_0^{+\uparrow}(D\!\setminus\!S_o; \leq b_+),
\end{equation*} 
together with an additional property of the positivity of $V\geq 0$ on $\RR_{\infty}^d\!\setminus\!o$.
\end{gluingtheorem}
\begin{proof}
By Proposition \ref{pr:0} we can consider the function $v \in \sbh_{+0} (D\!\setminus\!S_o; \circ r,b_-< b_+)$ as defined on $\RR^d_{\infty}\!\setminus\!S_o$ by \eqref{pr:0v}, i.e., $v\equiv 0$ on $\RR^d_{\infty}\!\setminus\!D$, and $v\in \sbh(\RR^d_{\infty} \!\setminus\!S_o)$. By Gluing Theorem \ref{gl:th_es}  with open set $\mathcal O:=\RR^d\!\setminus\!o$, with constants $M_v\overset{\eqref{avvM}}{:=}b^+$,
$m_v\overset{\eqref{avvm}}{:=}b_-$, $M_g\overset{\eqref{gDVg}}{:=}\const^+_{o,S_o,r}>0$,
and a constant  $B$ from \eqref{gDVgv},
we construct  a function $V\in \sbh_0(\RR^d_{\infty} \!\setminus\!o)$, $V(\infty):=0$,	
with properties \eqref{VK} that go into properties  \eqref{gDVhv}--\eqref{gDVov} together with identity \eqref{gDV=0}. 
Note that we use the principle of domination in \eqref{gDVleqv}--\eqref{gDVleqv+}
for Green's functions to replace $D_r$ with $D$,  
since a domain $D_r$ from \eqref{gDVleq} 	is a subdomain of $D$ provided \eqref{bbpmr}.
\end{proof}

\section{Approximation of test functions by Arens\,--\,Singer and Jensen potentials} 
 \setcounter{equation}{0}

\begin{theorem}\label{pr:13} Let  $b_{\pm}, r$ are constants from  \eqref{bbpmr}, $v\in \sbh_{+0}^{\uparrow}(D\!\setminus\!S_o; \circ r,b_-< b_+)$ (resp. $v\in \sbh_0^{+\uparrow}(D\!\setminus\!S_o; \leq b_+)$).  Then there are a constant  
\begin{equation}\label{BV}
B\overset{\eqref{gDVgv}}{=}\const_{o,S_o,r,b_{\pm}}^+,
\end{equation}
and  an increasing sequence $(V_n)_{n\in \NN}$ of Arens\,--\,Singer (respectively, Jensen) potentials $V_n\in ASP^1(D\!\setminus\!o)$ (respectively, $V_n\in JP^1(D\!\setminus\!o)$), $ n\in \NN$, such that 
\begin{subequations}\label{VPAS}
\begin{align}
0<&V_n\in \har(\Int S_o\!\setminus\!o)\text{ on $S_o\!\setminus\!0$},
\tag{\ref{VPAS}h}\label{{VPAS}h}
\\
BV_n&\underset{n\to \infty}{\nearrow}V \quad\text{on $D\!\setminus\!o$},
\tag{\ref{VPAS}$\uparrow$}\label{{VPAS}up}
\\
\intertext{where $V\colon  \RR_{\infty}^d\!\setminus\!o \to \overline\RR$  is a function with properties \eqref{gDVhv}--\eqref{gDVov}, \eqref{vuparr},}
V_n(x)&=-K_{d-2}(x,o)+O(1) \quad\text{as $o\neq x\to o$},
\tag{\ref{VPAS}o}\label{{VPAS}upo}
\\
BV_n&\leq  b_++Bg_D(\cdot ,o)\quad\text{on $S_o^{\cup (4r)}\!\setminus\!o$}.
\tag{\ref{VPAS}+}\label{{VPAS}b}
\end{align}
\end{subequations}
\end{theorem}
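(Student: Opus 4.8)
The plan is to combine Gluing Theorem \ref{glth5}, which already produces from a test function $v\in \sbh_{+0}^{\uparrow}(D\!\setminus\!S_o;\circ r,b_-<b_+)$ a subharmonic function $V$ on $\RR_{\infty}^d\!\setminus\!o$ with the logarithmic-type pole \eqref{gDVov} at $o$ and the behaviour \eqref{gDVhv}--\eqref{vuparr}, with a standard exhaustion/regularization argument that turns $\tfrac1B V$ into a monotone limit of Arens\,--\,Singer potentials (resp. Jensen potentials) $V_n$. First I would handle the generating case $v\in \sbh_{+0}(D\!\setminus\!S_o;\circ r,b_-<b_+)$: apply Gluing Theorem \ref{glth5} to get $V$ and the constant $B$ from \eqref{gDVgv}, so that $W:=\tfrac1B V\in \sbh_*(\RR_{\infty}^d\!\setminus\!o)$ satisfies $W\equiv 0$ on $\RR_{\infty}^d\!\setminus\!D$, $W$ is harmonic and positive on $S_o\!\setminus\!o$, $W(x)=-K_{d-2}(x,o)+O(1)$ as $x\to o$, and $W\le g_D(\cdot,o)+b_+/B$ on $S_o^{\cup(4r)}\!\setminus\!o$. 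Next I would exhaust $D$ by regular domains $D_n$ with $S_o^{\cup(4r)}\Subset D_n\Subset D_{n+1}\Subset D$ (Proposition \ref{llemmaD} or the standard existence of such an exhaustion) and set $V_n$ to be the function equal to $W$ on $D_n$ and solved out to $0$ outside $D_n$ by the Perron\,--\,Wiener\,--\,Brelot method — more precisely, $V_n:=$ the largest subharmonic minorant of $W$ on $\RR_{\infty}^d\!\setminus\!o$ that vanishes off $D_n$, upper-regularized. By the maximum principle $V_n\ge 0$, $V_n$ vanishes near $\partial D$ (hence has compact support in $D$ as required for membership in $ASP(D\!\setminus\!o)$), $V_n\nearrow W$ on $D\!\setminus\!o$ since $\partial D$ is non-polar (so the harmonic measures of $\RR_{\infty}^d\!\setminus\!D_n$ tend to $0$ off $D$), and $V_n$ keeps the pole $-K_{d-2}(\cdot,o)+O(1)$ at $o$ because near $o$ we stay inside every $D_n$ and $V_n=W$ there; this gives \eqref{{VPAS}h}, \eqref{{VPAS}upo}, and the normalization placing $V_n\in ASP^1(D\!\setminus\!o)$. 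The estimate \eqref{{VPAS}b} follows since $BV_n\le BW\le b_++Bg_D(\cdot,o)$ on $S_o^{\cup(4r)}\!\setminus\!o$ by construction, and \eqref{{VPAS}up} is exactly $BV_n\nearrow V$.

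To get the general case $v\in \sbh_{+0}^{\uparrow}(D\!\setminus\!S_o;\circ r,b_-<b_+)$, write $v=\lim_{k}v_k$ with $v_k\nearrow v$, $v_k\in \sbh_{+0}(D\!\setminus\!S_o;\circ r,b_-<b_+)$, apply the previous paragraph to each $v_k$ to obtain increasing sequences $(V_{k,n})_n$ of Arens\,--\,Singer potentials with $BV_{k,n}\nearrow V^{(k)}$ where $V^{(k)}$ is the gluing of $v_k$, and then use Proposition \ref{pr:up} (the family $ASP^1(D\!\setminus\!o)$ is closed under $\max$, since the maximum of two Arens\,--\,Singer potentials with the same pole normalization is again one): set $V_n:=\max_{k,n'\le n}V_{k,n'}$, which is an increasing sequence in $ASP^1(D\!\setminus\!o)$ with $BV_n\nearrow\sup_k V^{(k)}$, and one checks $\sup_k V^{(k)}$ equals the glued function $V$ attached to $v$ (the gluing construction commutes with increasing limits on $D\!\setminus\!S_o$, and near $o$ and off $D$ nothing changes with $k$). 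Properties \eqref{{VPAS}h}--\eqref{{VPAS}b} are inherited under $\max$ and under increasing limits. The Jensen-potential case is identical with $JP^1=(ASP^1)^+$ throughout, using that the $v$ in $\sbh_0^{+\uparrow}$ are nonnegative so the glued $V$ and the $V_n$ are nonnegative as well.

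The main obstacle is the bookkeeping that ensures the constructed $V_n$ land exactly in $ASP^1(D\!\setminus\!o)$ rather than in a slightly larger class: one must verify simultaneously that (i) $V_n$ has compact support in $D$ — this comes from the Perron construction on the exhaustion $D_n$ together with the non-polarity of $\partial D$, which forces the Perron minorant that vanishes off $D_n$ to vanish identically near $\partial D_n$; (ii) $V_n$ is genuinely subharmonic on all of $\RR_{\infty}^d\!\setminus\!o$ across $\partial D_n$ — this is where upper regularization and the gluing philosophy of Section \ref{tsf} enter; and (iii) the pole is preserved with the sharp $O(1)$, i.e.\ $V_n(y)=-K_{d-2}(y,o)+O(1)$ and not merely $\le$, which holds because $V_n$ coincides with the already-sharp $W$ on a fixed neighbourhood $S_o\!\setminus\!o$ of $o$ contained in every $D_n$. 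Once these three points are nailed down, the monotone convergence $BV_n\nearrow V$ and all the listed inequalities are immediate from the corresponding properties \eqref{gDVhv}--\eqref{vuparr} of $V$ produced by Gluing Theorem \ref{glth5}.
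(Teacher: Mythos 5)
Your skeleton coincides with the paper's up to the decisive step: like the paper, you reduce to the generating classes $\sbh_{+0}(D\setminus S_o;\circ r,b_-<b_+)$ and $\sbh_0^{+}(D\setminus S_o;\le b_+)$ via Proposition \ref{pr:up} and closure of $ASP^1$/$JP^1$ under $\max$, and you take $V$ and $B$ from Gluing Theorem \ref{glth5}. But your truncation of $W=V/B$ into compactly supported potentials --- the largest subharmonic minorant of $W$ vanishing off a regular exhaustion $D_n\nearrow D$ --- is a genuinely different device, and as written it has two gaps. First, you justify $V_n\nearrow W$ by the non-polarity of $\partial D$. Theorem \ref{pr:13} cannot assume this: it is the engine behind [s\ref{{s}1P}]$\Rightarrow$[s\ref{{s}2}] and [h\ref{{h}1P}]$\Rightarrow$[h\ref{{h}2+}], which Remarks \ref{rems1} and \ref{remh1} assert for an \emph{arbitrary} domain $D$. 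Even granting non-polarity, ``harmonic measures of $\RR^d_{\infty}\setminus D_n$ tend to $0$ off $D$'' does not prove that the reduced function of $W$ over $\RR^d_{\infty}\setminus D_n$ tends to $0$; to establish $\sup_n V_n=W$ you would need explicit competitors, and the natural ones are precisely the paper's truncations. Second, the claim ``$V_n=W$ near $o$ because we stay inside every $D_n$'' is false for a largest-minorant construction: inactivity of the constraint near $o$ does not prevent the minorant from dropping strictly below $W$ there (take $W=g_D(\cdot,o)$; the largest minorant vanishing off $D_n$ is $g_{D_n}(\cdot,o)<g_D(\cdot,o)$ on all of $D_n\setminus o$). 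Since \eqref{{VPAS}upo} (the sharp normalization distinguishing $ASP^1$ from $ASP$) and \eqref{{VPAS}h} rest on that identity in your argument, they are not established; one would need a separate lower bound near $o$ by competitors of the form $c\,g_{D_n}(\cdot,o)$, which you do not supply. (Minor: ``$V_n\ge 0$ by the maximum principle'' fails in the Arens--Singer case, where $W$ may be negative inside $D$; only \eqref{{VPAS}h} on $S_o\setminus o$ is claimed, and only positivity of $V_n$ in the Jensen case.)

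The paper's truncation is more elementary and avoids every one of these issues: it sets $v_n:=V-1/n$ on $\RR^d_{\infty}\setminus o$, except on those connected components of the open sublevel set $O_n:=\{V<1/n\}$ that meet $\RR^d_{\infty}\setminus D$, where it sets $v_n:=0$. Subharmonicity across $\partial O_n$ is immediate; compact support in $D$ follows because $V$ vanishes at $\partial D$, so a neighbourhood of $\partial D$ lies in such components; the pole, harmonicity and positivity near $o$ are inherited literally because $v_n=V-1/n$ there; and the monotone convergence $v_n\nearrow V$ needs no exhaustion, no boundary regularity and no capacity hypothesis. If you wish to keep the exhaustion idea, you must at minimum replace the harmonic-measure appeal by a competitor argument for the convergence and prove pole preservation separately.
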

\begin{proof} The classes $ASP(D\!\setminus\!o)$ and $JP(D\!\setminus\!o)$ are closed relative to the  $\max$-operation. By Propositions \ref{pr:up} and \ref{pr:0}, it suffices to prove  Theorem \ref{pr:13} only for  functions  
\begin{equation*}
v\in \sbh_{+0}(D\!\setminus\!S_o; \circ r,b_-< b_+)\quad \text{(resp. $v\in \sbh_0^{+}(D\!\setminus\!S_o; \leq b_+)$);  $v\equiv 0$ on $\RR_{\infty}^d\!\setminus\!D$}.  
\end{equation*}
Denote by  $\conn (Q,x) \in \Conn Q$ a {\it connected component of $Q\subset \RR_{\infty}^d$ containing}  $x$.
For a function $v\in \sbh_{+0}(D\!\setminus\!S_o; \circ r,b_-< b_+)$, we  consider a function $V$ from Gluing Theorem \ref{glth5} with properties \eqref{VKv}. For each number $n\in \NN$ we put in correspondence an open set  
$ O_n:=\bigl\{x\in \RR_{\infty}^d\!\setminus\!o\colon V(x)<1/n\bigr\}\supset O_{n+1}$, and  a function $v_n$ such that 
\begin{enumerate}[{i)}]
\item\label{iv} this function $v_n$ vanishes  on all connected components $\conn (O_n,x)\in \Conn O_n$ that met with complement  $\RR_{\infty}^d\!\setminus\!D$ of $D$, i.e.,  $v_n\equiv 0$ on every  connected  component $\conn (O_n,x)$ with $x\in \RR_{\infty}^d\!\setminus\!D$,
\item\label{iiv}  $v_n:=V-1/n$ on the rest of $\RR_{\infty}^d\!\setminus\!o$.
\end{enumerate}
By construction \ref{iv})--\ref{iiv}), these functions $v_n$  are subharmonic on $\RR^d_{\infty}\!\setminus\!0$ with $\supp v_n\Subset D$. Therefore,  $v_n \overset{\eqref{{s0}00}}{\in}\sbh_{00} (D\!\setminus\!o)$. 
Besides,  these functions $v_n$ form an increasing sequence $v_n\underset{n\to \infty}{\nearrow} V$ on $\RR^d_{\infty}\!\setminus\!o$. In view of \eqref{gDVov}, there exists the limit
\begin{equation*}
v_n(x)=-K_{d-2}(x,o)+O(1) \quad\text{as $o\neq x\to o$}.
\end{equation*}
Thus, if we set $V_n:=\frac{1}{B}\,v_n$, 
then we have \eqref{ASpc+}. Hence every function $V_n$ belongs to $ASP^1(D\!\setminus\!o)$ by Example \ref{expASP}, \eqref{ASP1}, and  properties \eqref{VPAS} are fulfilled. 

In the case $v\in \sbh_0^{+}(D\!\setminus\!S_o; \leq b_+)$, we consider the functions $v_n^+$ after \ref{iv})--\ref{iiv}) instead of $v_n$ and we obtain 
$V_n\in JP^1(D\!\setminus\!o)$  with  properties \eqref{VPAS}
by Example \ref{expJP}. 
\end{proof}

\section{From Arens\,--\,Singer and Jensen potentials to test functions}\label{AStf}
\setcounter{equation}{0}

\begin{proof}[of implications 
{\rm [s\ref{{s}1P}]$\Rightarrow$[s\ref{{s}2}]} 
and\/ {\rm [h\ref{{h}1P}]$\Rightarrow$[h\ref{{h}2+}]}
of Theorems\/ {\rm \ref{crit1}, \ref{crit2}}]
By Definition \ref{bal} in variant \eqref{SQ}, the condition $\varDelta_u\curlyeqprec_{JP(D\!\setminus\!o)}\varDelta_M$  from  [s\ref{{s}1P}] (resp. $\varDelta_u\curlyeqprec_{ASP(D\!\setminus\!o)}\varDelta_M$ from [h\ref{{h}1P}])  means that there is a constant $C_1\in \RR$ such that
\begin{equation}\label{SQd}
\int_{D\!\setminus\!o}v\dd \varDelta_u \leqslant \int_{D\!\setminus\!o} v \dd \varDelta_M+C_1
\quad\text{for each $v\overset{\text{or}}{\in} \left[\begin{array}{c}
 JP(D\!\setminus\!o)\\ 
ASP(D\!\setminus\!o)
\end{array}\right.$, respectively}. 
\end{equation}

Let $v\in \sbh_0^{+\uparrow}(D\!\setminus\!S_o; \leq b_+)$ in the case [s\ref{{s}1P}]
or $v\in \sbh_{+0}^{\uparrow}(D\!\setminus\!S_o; \circ r, b_-< b_+)$ in the case [h\ref{{h}1P}], respectively. By Theorem \ref{pr:13},  there are  a constant
from \eqref{BV} and  an increasing sequence of  Jensen (respectively, Arens\,--\,Singer) potentials $V_n\in JP(D\!\setminus\!o)$ (respectively, $V_n\in ASP(D\!\setminus\!o)$), $n\in \NN$, satisfying \eqref{VPAS}.
Hence  \eqref{SQd} entails 
\begin{equation}\label{estVVn}
\int_{D\!\setminus\!o} BV_n \dd \varDelta_u\leqslant 
\int_{D\!\setminus\!o} BV_n \dd \varDelta_M+BC_1 \leqslant 
\int_{D\!\setminus\!o} V \dd \varDelta_M+BC_1, 
\end{equation}
where the function $ V:=\lim_{n\to \infty} BV_n$  on $D\!\setminus\!o$  has all the properties  \eqref{gDVhv}--\eqref{gDVov}, \eqref{vuparr}, and  the  constant $BC_1\in \RR$ independent of $V_n$, $n\in \NN$.
Let's represent the integral on the right-hand side of inequalities \eqref{estVVn} as a sum of integrals:
\begin{multline}\label{VmuM}
\int_{D\!\setminus\!o} V \dd \varDelta_M=\left(\int_{D\!\setminus\!S_o^{\cup(4r)}} +
\int_{S_o^{\cup (4r)}\!\setminus\!S_o}+
\int_{S_o\!\setminus\!o} \right)V \dd \varDelta_M\\
\overset{\eqref{gDV=v}\text{--}\eqref{gDVleqv+}}{\leqslant} \int_{D\!\setminus\!S_o^{\cup(4r)}}v \dd \varDelta_M +
b_+\varDelta_M(S_o^{\cup (4r)}\!\setminus\!S_o)+
B\int_{S_o^{\cup (4r)}\!\setminus\!o} g_D(x,o)\dd \varDelta_M
\leqslant \int_{D\!\setminus\!S_o^{\cup(4r)}}v \dd \varDelta_M
+C_2,
\end{multline}
where $C_2=\const_{o,S_o,r,b_{\pm},B,u,M}^+$
is a constant independent of $v$. In addition, in the case 
$v\in \sbh_0^{+\uparrow}(D\!\setminus\!S_o; \leq b_+)$, the function  $v$ is positive on $D\!\setminus\!S_0$, and we have 
\begin{equation}\label{+M}
\int_{D\!\setminus\!o} V \dd \varDelta_M\overset{\eqref{VmuM}}{\leqslant} \int_{D\!\setminus\!S_o}v \dd \varDelta_M
+C_2\quad\text{in the case  [s\ref{{s}1P}]}.
\end{equation}
If the integrals in the right-hand sides of \eqref{VmuM} and \eqref{+M} 
are equal to $+\infty$, then there is nothing to prove. Otherwise, by 
Beppo Levi's monotone convergence theorem for Lebesgue integral, 
\eqref{estVVn} and \eqref{{VPAS}up} together with \eqref{VmuM} and \eqref{+M}  entails
\begin{equation}\label{Levlim}
\int_{D\!\setminus\!o} V \dd \varDelta_u\leqslant 
\int_{D\!\setminus\!S_o^*} v \dd \varDelta_M+C_2,\quad\text{where 
$S_o^*\overset{\text{or}}{=} \left[\begin{array}{c}
S_o\\ 
S_o^{\cup (4r)}
\end{array}\right.$, respectively.} 
\end{equation}
According to 
\eqref{gDVhv}--\eqref{gDVleqv}, it is follows from  \eqref{Levlim}  that
\begin{multline*}
\int_{D\!\setminus\!S_o}v\dd \varDelta_u
\overset{\eqref{gDVhv}}{\leqslant}
\int_{S_0\!\setminus\!o} V\dd  \varDelta_u
+\int_{D\!\setminus\!S_0} v\dd  \varDelta_u\\
\overset{\eqref{gDVleqv}}{\leqslant}
\int_{S_0\!\setminus\!o} V\dd  \varDelta_u
+\int_{S_o^{\cup (4r)}\!\setminus\!S_0} V\dd  \varDelta_u
+\int_{D\!\setminus\!S_o^{\cup (4r)}} v\dd  \varDelta_u
\overset{\eqref{gDV=v}}{=}
\int_{D\!\setminus\!o} V\dd  \varDelta_u
\overset{\eqref{Levlim}}{\leqslant}
\int_{D\!\setminus\!S_o^*} v \dd \mu_M+C,
\end{multline*}
where the constant $C$ is independent of $v$, and  $S_o^*$ is defined in \eqref{Levlim}.
\end{proof}
\begin{proof}[of implication\/ {\rm [h\ref{{h}2+}]
$\Rightarrow$[h\ref{{h}2}]}
of Theorem\/ {\rm \ref{crit2}}]
 If  $v\in \sbh_{+0}^{\uparrow}(D\!\setminus\!S_o;  r, b_-< b_+)\subset \sbh_{+0}^{\uparrow}(D\!\setminus\!S_o;  \circ r, b_-< b_+)$,  then this function $v$ is bounded from  below on $S_o^{\cup (4r)}\!\setminus\!S_o$  by the constant $b_-\in (-\RR^+)\!\setminus\!0$, and \eqref{almB} 
implies
\begin{multline*}
\int_{D\!\setminus\!S_o} v \dd \varDelta_u\overset{\eqref{VmuM}}{\leqslant} \int_{D\!\setminus\!S_o}v \dd \varDelta_M
-\int_{S_o^{\cup (4r)}\!\setminus\!S_o}v \dd \varDelta_M+C
\\
\leqslant \int_{D\!\setminus\!S_o}v \dd \varDelta_M-\varDelta_M(S_o^{\cup (4r)}\!\setminus\!S_o)b_-+C \quad\text{for each $v\in \sbh_{+0}^{\uparrow}(D\!\setminus\!S_o;  r, b_-< b_+)$}, 
\end{multline*} 
where the constant $\bigl(-\varDelta_M(S_o^{\cup (4r)}\!\setminus\!S_o)b_-+C\bigr)$
is independent of $v$.  
\end{proof}

\section{ Arens\,--\,Singer and Jensen measures and their potentials}\label{ASP} 
\setcounter{equation}{0}

\begin{definition}[{\rm \cite{R}, \cite[Definition 2]{Kha03}, \cite[3.1, 3.2]{KhaRoz18}, \cite{Chi18}}]\label{df:pot} 
Let ${\mu} \in \Meas_{\comp}(\RR^d)$ be a charge with compact support. Its {\it potential\/}  is a  function 
\begin{equation}\label{{pmu}p} 
\pt_{\mu}\colon \RR^d\to \overline \RR, \quad \pt_{\mu}(y)\overset{\eqref{{kK}K}}{:=}\int K_{d-2}(x,y) \dd \mu (x), 
\end{equation}
where the kernel  $K_{d-2}$ is defined in Definition \ref{df:kK}
by the function $k_q$ from \eqref{{kK}k}, 
\begin{equation*}
\dom {\pt}_{\mu}=
\left\{y\in \RR^d\colon \int_{0}\frac{\mu^-(y,t)}{t^{m-1}} \dd t<+ \infty 
\right\}\bigcup
\left\{y\in \RR^d\colon \int_{0}\frac{\mu^+(y,t)}{t^{m-1}} \dd t<+ \infty 
\right\},
\end{equation*}
and $\RR^d\!\setminus\!\dom {\pt}_\mu$ is a  bounded {\it polar set\/} in $\RR^d$ with $\text{Cap}^* (\RR^d\!\setminus\!\dom {\pt}_\mu)=0$.
 \end{definition}

\begin{proposition}\label{pt_below} If 
\begin{equation}\label{muLo}
\mu \in \Meas_{\comp}^+(\RR^d),\quad
L\Subset \RR^d, \quad L\!\setminus\!o\neq \varnothing,
\end{equation} 
then 
\begin{subequations}\label{pmul}
\begin{align}
\inf_{x\in L} {\pt}_{\mu}(x)&\geqslant \mu(\RR^d)k_{d-2}\bigl(\dist(L,\supp \mu)\bigr),
\tag{\ref{pmul}i}\label{{pmul}i}
\\
\inf_{x\in L} {\pt}_{\mu-\delta_o}(x)&\geqslant \mu(\RR^d)k_{d-2}\bigl(\dist(L,\supp \mu)\bigr)-
k_{d-2}\left(\sup_{x\in L}|x|+|o|\right).
\tag{\ref{pmul}o}\label{{pmul}o}
\end{align}
\end{subequations}
\end{proposition}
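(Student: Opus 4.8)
The plan is to estimate the potential $\pt_\mu$ from below pointwise on $L$ by controlling the kernel $K_{d-2}(x,y)=k_{d-2}(|x-y|)$ from below, and then to subtract off the contribution of $\delta_o$ for the second inequality. First I would recall that, by \eqref{{kK}k}, the function $k_{d-2}$ is \emph{increasing} on $\RR^+\!\setminus\!0$ for every $d\in\NN$ (for $d=2$ it is $\ln t$; for $d\neq 2$ it is $-\sgn(d-2)\,t^{-(d-2)}$, which is increasing in both remaining cases $d=1$ and $d\geq 3$). Hence for $x\in L$ and any point $y$ in the compact support $\supp\mu$ we have $|x-y|\geq \dist(L,\supp\mu)>0$ (positivity of this distance is where $L\Subset\RR^d$ and compactness of $\supp\mu$ enter, together with $L\!\setminus\!o\neq\varnothing$ guaranteeing $L\neq\varnothing$), so $K_{d-2}(x,y)=k_{d-2}(|x-y|)\geq k_{d-2}\bigl(\dist(L,\supp\mu)\bigr)$.

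Integrating this pointwise bound in $y$ against the positive measure $\mu$ yields, for each $x\in L$,
\begin{equation*}
\pt_\mu(x)=\int K_{d-2}(x,y)\dd\mu(y)\geqslant k_{d-2}\bigl(\dist(L,\supp\mu)\bigr)\,\mu(\RR^d),
\end{equation*}
and taking the infimum over $x\in L$ gives \eqref{{pmul}i}. Note that if $\dist(L,\supp\mu)>0$ the integrand is bounded below by a finite constant, so no issue of integrability or the exceptional polar set $\RR^d\!\setminus\!\dom\pt_\mu$ arises here — the bound holds for \emph{every} $x\in L$, not merely quasi-everywhere. (If $\mu=0$ both sides are $0$ under the usual convention, so that degenerate case is harmless.)

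For \eqref{{pmul}o} I would write $\pt_{\mu-\delta_o}(x)=\pt_\mu(x)-K_{d-2}(x,o)$ and bound the subtracted term from above: for $x\in L$ we have $|x-o|\leq |x|+|o|\leq \sup_{x\in L}|x|+|o|$, and again by monotonicity of $k_{d-2}$ together with $K_{d-2}(x,o)=k_{d-2}(|x-o|)$ when $x\neq o$ (and $K_{d-2}(x,o)=-\infty\leq$ anything when $x=o$, $d\geq2$, while for $d=1$ the value $0$ is still $\leq k_{d-2}(\sup_{x\in L}|x|+|o|)$ as soon as that argument is $\geq$ the relevant threshold — one should just note $\sup_{x\in L}|x|+|o|>0$ since $L\!\setminus\!o\neq\varnothing$ forces some $x\in L$ with $|x-o|>0$, hence $\sup_{x\in L}|x|+|o|\geq|x-o|>0$), we get $K_{d-2}(x,o)\leq k_{d-2}\bigl(\sup_{x\in L}|x|+|o|\bigr)$. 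Combining with \eqref{{pmul}i} and taking the infimum over $x\in L$ delivers \eqref{{pmul}o}.

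I do not expect any real obstacle; the only points demanding care are the sign conventions packaged into \eqref{{kK}k} (so that ``$k_{d-2}$ increasing'' is verified uniformly in $d$, including the one-dimensional case where $K_{d-2}$ is defined separately on the diagonal), the verification that $\dist(L,\supp\mu)>0$ and $\sup_{x\in L}|x|+|o|>0$ from the hypotheses \eqref{muLo}, and the trivial degenerate case $\mu(\RR^d)=0$. Everything else is a direct monotone estimate under the integral sign.
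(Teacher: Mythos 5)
Your argument is the same monotonicity-under-the-integral estimate the paper uses, and it is correct in substance; the only flaw is the parenthetical claim that $\dist(L,\supp \mu)>0$ follows from the hypotheses \eqref{muLo}. It does not: $L\Subset \RR^d$ and compactness of $\supp\mu$ give no disjointness, so $L$ may meet $\supp\mu$ and the distance may vanish. The paper disposes of this degenerate case separately: for $d\geqslant 2$ one has $k_{d-2}(t)\to-\infty$ as $t\to 0^+$, so when $\dist(L,\supp\mu)=0$ the right-hand sides of \eqref{{pmul}i}--\eqref{{pmul}o} equal $-\infty$ and the inequalities are vacuously true, while the case $d=1$ is trivial since $k_{-1}(t)=t$ and the same monotone bound applies with $k_{-1}(0)=0$. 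With that one-line repair (and dropping the assertion that the integrand is bounded below by a finite constant, which you already correctly conditioned on $\dist>0$), your proof coincides with the paper's.
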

\begin{proof} The case $d=1$ is trivial. Let $d\geqslant 2$.
   If $\dist(L,\supp \mu)=0$, 
then the right-hand sides in inequalities \eqref{pmul}
are equal to $-\infty$, and inequalities \eqref{pmul} are true.  Otherwise, by Definition \ref{df:pot}, we obtain
\begin{multline}\label{est:pinf}
 {\pt}_{\mu}(x)=\int k_{d-2}\bigl(|x-y|\bigr)\dd \mu(y)\geqslant
\inf_{y\in \supp \mu} k_{d-2}\bigl(|x-y|\bigr)
\mu (\RR^d) \\
\geqslant k_{d-2}\left(\inf_{y\in \supp \mu} |x-y|\right) \mu (\RR^d)=
\mu(\RR^d)k_{d-2}\bigl(\dist(x,\supp \mu)\bigr),
\end{multline}
since the function $k_q$ from \eqref{{kK}k} is \textit{increasing.\/}
We obtain inequality \eqref{{pmul}i}
 after applying the operation $\inf_{x\in L}$ to both sides of inequality \eqref{est:pinf}. Using \eqref{{pmul}i}, we have
\begin{multline*}
\inf_{x\in L} {\pt}_{\mu-\delta_o}(x) \overset{\eqref{{pmu}p}}{=}
\inf_{x\in L} \bigl({\pt}_{\mu}(x)-k_{d-2}
\bigl(|x-o|\bigr)\bigr)\geqslant 
\inf_{x\in L} {\pt}_{\mu}(x)- \sup_{x\in L}k_{d-2}
\bigl(|x-o|\bigr)
\\
\overset{\eqref{{pmul}i}}{\geqslant}\mu(\RR^d)k_{d-2}\bigl(\dist(L,\supp \mu)\bigr)-
k_{d-2}\left(\sup_{x\in L}|x|+|o|\right),
\end{multline*}
and it  is  inequality \eqref{{pmul}o}. 
\end{proof}

\begin{dualtheo}[{\rm \cite[Proposition 1.4, Duality Theorem]{Kha03}}] The mapping 
\begin{equation}\label{mcP}
\mathcal P_o \colon \mu \longmapsto {\pt}_{\mu-\delta_o}
\end{equation}
is an affine bijection from $AS_o(O)$ onto $PAS(O\!\setminus\!o)$
(resp. $J_o(O)$ onto $JP(O\!\setminus\!o)$)
 with inverse mapping
\begin{equation}\label{P-1}
\mathcal P_o^{-1} \colon V
\overset{\eqref{df:cm}}{\longmapsto} 
c_d {\bigtriangleup}V \bigm|_{\RR^d\!\setminus\!o}+\left(1-\limsup_{o\neq y\to o}\; \frac{V(y)}{\,-K_{d-2}(o,y)}\right)\cdot \delta_o.
\end{equation}

Let $o\in \Int Q=Q\Subset O$. The restriction of $\mathcal P_o$ to the class 
\begin{equation}\label{interAi0}
\begin{split}
\bigl\{\mu \in AS_o(O)\colon \supp \mu \cap Q=\varnothing \bigr\}
\\
\Bigl(\text{resp. $\bigl\{\mu \in J_o(O)\colon \supp \mu \cap Q=\varnothing \bigr\}$}\Bigr)
\end{split}
\end{equation}
define an affine bijection from class \eqref{interAi0}   onto class\/
{\rm (see   \eqref{ASP1}, Examples \ref{expASP}, \ref{expJP})}
\begin{equation}\label{PVl0}
\begin{split}
ASP^1(O\!\setminus\!o) \bigcap \har (Q\!\setminus\!o) 
\\
\Bigl(\text{resp. $JP_o^1(O) \bigcap \har \bigl(Q\!\setminus\!o\bigr)$}\Bigr).
\end{split}
\end{equation}
The restriction of $\mathcal P_o$ to the class 
\begin{equation}\label{interAi}
\begin{split}
\bigl\{\mu \in AS_o(O)\colon \supp \mu \cap Q=\varnothing \bigr\}
\bigcap  \bigl(C^{\infty}(O) \dd \lambda_d \bigr)\\
\Bigl(\text{resp. $\bigl\{\mu \in J_o(O)\colon \supp \mu \cap Q=\varnothing \bigr\}\bigcap  \bigl(C^{\infty}(O) \dd \lambda_d \bigr)$}\Bigr)
\end{split}
\end{equation}
define an affine bijection from class \eqref{interAi}  onto class
\begin{equation}\label{PVl}
\begin{split}
ASP^1(O\!\setminus\!o) \bigcap \har (Q\!\setminus\!o) \bigcap C^{\infty} (O\!\setminus\!o)\\  
\Bigl(\text{resp. $JP^1(O\!\setminus\!o)\bigcap \har (Q\!\setminus\!o) \bigcap C^{\infty} (O\!\setminus\!o) $}\Bigr).
\end{split}
\end{equation}
\end{dualtheo}
 This transition from the main bijection $\mathcal P_o$ to the bijection
from \eqref{interAi0} onto \eqref{PVl0} or 
 from \eqref{interAi} onto  \eqref{PVl} by restriction of $\mathcal P_o$  to \eqref{interAi0} or \eqref{interAi}  is quite obvious.

\begin{PJf}[{\rm \cite[Proposition 1.2]{Kha03}}] If $u\in \sbh(D)$, $u(o)\neq -\infty$, then
\begin{equation*}
u(o)=\int_Du\dd \mu-\int_{D\!\setminus\!o}\pt_{\mu-\delta_o} \dd \varDelta_u
\quad\text{for each  $\mu\in AS_o(D)$.}
\end{equation*}
\end{PJf}

\section{Embeddings of Arens\,--\,Singer\,/Jensen potentials into classes of test functions}\label{ASJem}
\setcounter{equation}{0}

\underline{Throughout this  Sec.~\ref{ASJem},} the boundary $\partial D$ of $D\ni o$ is {\it non-polar,\/} i.e., $\text{Cap}^*(\partial D)>0$. 

\begin{proposition}[{\rm a variant of  Phr\'agmen\,--\,Lindel\"of principle}]\label{FLp} If $v\in \sbh (D\!\setminus\!o)$
satisfies the conditions
\begin{equation}\label{limgv}
\limsup_{o\neq x\to o}\;\frac{v(x)}{\,-K_{d-2}(x,o)} \leqslant 0, 
\quad \limsup_{D\ni x\to \partial D} v(x) \leqslant 0,
\end{equation} 
then $v\leq 0$ on $D\!\setminus\!o$. In particular,
if a function $V\in \sbh (D\!\setminus\!o)$
satisfies the conditions
\begin{equation}\label{limgV}
\limsup_{o\neq x\to o}\frac{V(x)}{-K_{d-2}(x,o)} \leqslant c\in \RR^+, 
\quad \limsup_{D\ni x\to \partial D} V(x) \leqslant 0,
\end{equation}
then $V\leq cg_D(\cdot, o)$ on $D\!\setminus\!o$. 
\end{proposition}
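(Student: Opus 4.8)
The plan is to prove the first assertion directly by a Phrágmen–Lindelöf / comparison argument against a small multiple of the Green's function, and then to derive the second assertion as a formal consequence of the first applied to $V - c\,g_D(\cdot,o)$.

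\textbf{Step 1: reduction of the second statement to the first.} Suppose the first assertion is known. Given $V\in\sbh(D\!\setminus\!o)$ satisfying \eqref{limgV}, set $v:=V-c\,g_D(\cdot,o)$. Since $g_D(\cdot,o)\in\sbh^+(\RR^d_\infty\!\setminus\!o)$ and $g_D(x,o)=-K_{d-2}(x,o)+O(1)$ as $o\neq x\to o$ by \eqref{gD0a}, the first limit in \eqref{limgV} gives
\begin{equation*}
\limsup_{o\neq x\to o}\frac{v(x)}{-K_{d-2}(x,o)}
=\limsup_{o\neq x\to o}\frac{V(x)}{-K_{d-2}(x,o)}-c\leqslant 0 .
\end{equation*}
For the boundary behaviour, $g_D(\cdot,o)$ vanishes on $\partial D$ (by its definition \eqref{g} together with the fact that $\partial D$ is non-polar, so all boundary points are regular up to a polar set, and the limsup-regularization makes $g_D(\cdot,o)=0$ there), hence $\limsup_{D\ni x\to\partial D}v(x)\leqslant 0$. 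The first assertion then yields $v\leq 0$, i.e. $V\leq c\,g_D(\cdot,o)$ on $D\!\setminus\!o$.

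\textbf{Step 2: proof of the first assertion.} We must show that $v\in\sbh(D\!\setminus\!o)$ with \eqref{limgv} satisfies $v\leq 0$ on $D\!\setminus\!o$. Fix $\varepsilon>0$ and a reference function that controls $v$ near $o$: choose any $V_0\in JP^1(D\!\setminus\!o)$ (e.g. $V_0:=g_{D'}(\cdot,o)$ for some regular $D'\Subset D$ with $o\in D'$, which lies in $JP^1$ by Example \ref{expJP}), so $V_0\geq 0$ on $D\!\setminus\!o$ and $V_0(x)=-K_{d-2}(x,o)+O(1)$ as $o\neq x\to o$. Consider $w_\varepsilon:=v-\varepsilon V_0\in\sbh(D\!\setminus\!o)$. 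By the first condition in \eqref{limgv}, $w_\varepsilon(x)\to-\infty$ as $o\neq x\to o$ (since $v(x)/(-K_{d-2}(x,o))\leqslant o(1)$ while $-\varepsilon V_0(x)$ behaves like $-\varepsilon(-K_{d-2}(x,o))\to-\infty$); more carefully, $\limsup_{o\neq x\to o} w_\varepsilon(x)/(-K_{d-2}(x,o))\leqslant -\varepsilon<0$, so $w_\varepsilon$ is bounded above near $o$ and in fact tends to $-\infty$. Hence $w_\varepsilon$ extends to an upper-semicontinuous function on $D$ by setting $w_\varepsilon(o):=-\infty$, and this extension is subharmonic on all of $D$ (a subharmonic function on $D\!\setminus\!o$ that is bounded above near the isolated singularity $o$ extends subharmonically, and assigning the value $-\infty$ is consistent with upper semicontinuity and the sub-mean-value property). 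Now $w_\varepsilon\leq v\leq 0$ is not yet known, but we do have $\limsup_{D\ni x\to\partial D}w_\varepsilon(x)\leqslant\limsup_{D\ni x\to\partial D}v(x)\leqslant 0$ because $V_0\geq 0$. Since $\partial D$ is non-polar, the maximum principle for subharmonic functions on $D$ (in the form: a subharmonic function on $D$ whose boundary limsup is $\leqslant 0$ is $\leqslant 0$ on $D$, valid precisely because $\partial D$ is not polar) gives $w_\varepsilon\leq 0$ on $D$, i.e. $v\leq\varepsilon V_0$ on $D\!\setminus\!o$. Letting $\varepsilon\downarrow 0$ gives $v\leq 0$ on $D\!\setminus\!o$.

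\textbf{The main obstacle} is the removable-singularity / extension step at $o$: one must be sure that $w_\varepsilon=v-\varepsilon V_0$, which a priori is only subharmonic on $D\!\setminus\!o$, genuinely extends to a subharmonic function on $D$ so that the global maximum principle on $D$ applies. This is where the first condition in \eqref{limgv} is used essentially — it forces $w_\varepsilon$ to be bounded above near $o$ (indeed $\to-\infty$), which is exactly the hypothesis of the classical removable-singularity theorem for subharmonic functions at an isolated point. The role of the non-polarity of $\partial D$ is equally essential and enters only in the final maximum-principle step; without it the conclusion fails (constants, or harmonic measure of a polar boundary, provide counterexamples). Everything else — the asymptotic bookkeeping with $-K_{d-2}(x,o)$ and the passage $\varepsilon\downarrow 0$ — is routine.
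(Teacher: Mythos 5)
Your proof is correct and follows essentially the same route as the paper's: subtract a small multiple of a Green-type potential with the singularity $-K_{d-2}(\cdot,o)$ at $o$ (the paper uses $a\,g_D(\cdot,o)$ where you use $\varepsilon V_0$), invoke the removable-singularity theorem to extend to a subharmonic function on $D$, apply the maximum principle (which is where non-polarity of $\partial D$ enters), and let the small parameter tend to $0$; the second assertion is then obtained exactly as in the paper by applying the first to $V-c\,g_D(\cdot,o)$. The only cosmetic remark is that in your Step 1 the boundary estimate needs nothing about boundary regularity: $g_D(\cdot,o)\geq 0$ already gives $V-c\,g_D(\cdot,o)\leq V$, hence the required $\limsup\leqslant 0$ at $\partial D$.
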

\begin{proof} By conditions \eqref{limgv}, for any $a\in \RR^+\!\setminus\!0$, we have
\begin{equation*}
v(x)-ag_D(x,o)\leq O(1), \; o\neq x\to o;\quad
\limsup_{D\ni x\to \partial D} \bigl(v(x)-ag_D(x,o)\bigr)\leqslant 0. 
\end{equation*}
Hence the function $v-ag_D\in \sbh(D\!\setminus\!o)$ has the removable singularity at the point $o$ \cite[Theorem 5.16]{HK}, and the function 
\begin{equation}\label{vo}
\begin{cases}
v-ag_D &\quad\text{on $D\!\setminus\!o$},\\ 
\limsup\limits_{o\neq x\to o}\big(v(x)-ag_D(x,o)\big) 
&\quad\text{at $o$,}
\end{cases}
\end{equation}
is subharmonic on $D$ and
\begin{equation*}
 \limsup_{D\ni x\to \partial D}\big(v(x)-ag_D(x,o)\big)\leqslant 0. 
\end{equation*}
By the maximum principle, the function \eqref{vo} is negative on $D$, and $v\leq ag_D(\cdot, o)$
on $D\!\setminus\!o$ \textit{for an arbitrary\/} $a>0$. 
Thus, $v\leq 0$ on $D$. In particular, for $v:=V-cg_D(\cdot, o)$, under the conditions \eqref{limgV}, we have \eqref{limgv}
and obtain $V-cg_D(\cdot, o)\leq 0$ on $D$.
\end{proof}
\begin{theorem}[{\rm on embedding}]\label{l2} 
Let $S_o$ be a subset from\/ \eqref{S0seto}, and let $r, b_{\pm}$ are constants from\/  \eqref{bbpmr}. 
For any domain\/ $D_o$ satisfying 
\begin{equation}\label{o0SU}
o\in \Int S_o \Subset 
S_o^{\cup (4r)} \Subset D_o\Subset  D ,
\end{equation}
we can find a constant $B=\const^+_{o,S_o,r,D_o}\in \RR^+\setminus o$ 
such that 
\begin{subequations}\label{VvU0}
\begin{align}
ASP^1(D\!\setminus\!o)\bigcap \har (D_o\!\setminus\!o)
&\subset \sbh_{00}(D\!\setminus\!S_o ;r,-B< B),
\tag{\ref{VvU0}A}\label{{VvU0}AS}
\\
JP^1(D\!\setminus\!o)\bigcap \har (D_o\!\setminus\!o)
&\subset \sbh_{00}^+(D\!\setminus\!S_o;\leq B),
\tag{\ref{VvU0}J}\label{{VvU0}J}
\end{align}
\end{subequations}	
where, in the case\/ \eqref{{VvU0}AS}, 
we assume that the subset $S_o$ is connected.
\end{theorem}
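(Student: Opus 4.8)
\medskip

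The plan is to reduce the two inclusions of Theorem \ref{l2} to a couple of uniform estimates on an arbitrary member $V$ of the left-hand class, and then to obtain these estimates from a Phr\'agmen\,--\,Lindel\"of comparison with $g_D(\cdot,o)$ (upper bound) and from the potential representation of Arens\,--\,Singer potentials (lower bound). First, the reduction: if $V\in ASP^1(D\!\setminus\!o)\subset\sbh_{00}(D\!\setminus\!o)$ (resp.\ $V\in JP^1(D\!\setminus\!o)\subset\sbh_{00}^+(D\!\setminus\!o)$), then, simply by restricting to $D\!\setminus\!S_o\subset D\!\setminus\!o$, the function $V$ is subharmonic on $D\!\setminus\!S_o$, has boundary limit $0$ at every point of $\partial D$, vanishes identically on $D\!\setminus\!S(V)$ for some $S(V)\Subset D$, and (in the Jensen case) is non-negative. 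So all that is missing is a constant $B=\const^+_{o,S_o,r,D_o}>0$ with $V\leqslant B$ on $D\!\setminus\!S_o$ and, in case of the inclusion \eqref{{VvU0}AS}, also $-B\leqslant V$ on $S_o^{\cup(4r)}\!\setminus\!S_o$; in case of \eqref{{VvU0}J} the lower bound is automatic from $V\geqslant 0$. I treat $d\geqslant 2$; the case $d=1$, where ``subharmonic'' means convex, is elementary.

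For the upper bound I would invoke Proposition \ref{FLp}. Every such $V$ satisfies $\limsup_{o\neq x\to o}V(x)/\!\bigl(-K_{d-2}(x,o)\bigr)\leqslant 1$ by the defining inequality \eqref{ASpc+} (in the Jensen case $V$ coincides near $o$ with a function of $ASP^1(D\!\setminus\!o)$) and $\limsup_{D\ni x\to\partial D}V(x)\leqslant 0$ because $V\equiv 0$ near $\partial D$; hence $V\leqslant g_D(\cdot,o)$ on $D\!\setminus\!o$. The extended Green's function $g_D(\cdot,o)$ is non-negative, bounded on $D\!\setminus\!S_o$, harmonic on $D\!\setminus\!\clos S_o$, and vanishes off a polar subset of $\partial D$ (here $\partial D$ is non-polar); so, by the maximum principle, $g_D(\cdot,o)\leqslant B_1:=\sup_{\partial S_o}g_D(\cdot,o)<+\infty$ on $D\!\setminus\!S_o$, with $B_1=\const^+_{o,S_o}$. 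Therefore $V\leqslant B_1$ on $D\!\setminus\!S_o$.

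The lower bound in case \eqref{{VvU0}AS} is the main point, since $V$ obeys no minimum principle and no comparison is available on $D\!\setminus\!S_o$ directly; I would pass to the potential side. By the Duality Theorem with $O:=D$ and $Q:=D_o$ (an open set with $o\in D_o\Subset D$), the function $V\in ASP^1(D\!\setminus\!o)\cap\har(D_o\!\setminus\!o)$ equals $\pt_{\mu-\delta_o}$ for a unique $\mu\in AS_o(D)$ with $\supp\mu\cap D_o=\varnothing$; testing the representing property of $\mu$ against the constants gives $\mu(\RR^d)=1$, and $\supp\mu$ is compact with $\supp\mu\subset D\!\setminus\!D_o$, so for the compact set $L:=\clos\bigl(S_o^{\cup(4r)}\bigr)$, which by \eqref{o0SU} lies in $D_o$ and meets $\RR^d\!\setminus\!o$, one has $\dist(L,\supp\mu)\geqslant\dist\bigl(L,\RR^d\!\setminus\!D_o\bigr)=:\rho>0$. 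Proposition \ref{pt_below}, inequality \eqref{{pmul}o}, together with $\mu(\RR^d)=1$ and the monotonicity of $k_{d-2}$, then yields
\begin{equation*}
\inf_{x\in L}V(x)=\inf_{x\in L}\pt_{\mu-\delta_o}(x)\geqslant k_{d-2}(\rho)-k_{d-2}\Bigl(\sup_{x\in L}|x|+|o|\Bigr)=:-C_6,
\end{equation*}
a finite constant $C_6=\const^+_{o,S_o,r,D_o}$ (finite because $\rho>0$ and $L$ is compact; the connectedness of $S_o$ assumed in this case serves only to match the test class $\sbh_{00}(D\!\setminus\!S_o;r,b_-<b_+)$ with the one appearing in Theorem \ref{crit2}). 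In particular $V\geqslant-C_6$ on $S_o^{\cup(4r)}\!\setminus\!S_o\subset L$. Finally, taking $B:=\max\{B_1,C_6\}>0$ (positive since $B_1>0$) and combining with the reduction above establishes \eqref{{VvU0}AS} and \eqref{{VvU0}J}; intersecting both inclusions with $C^\infty(D\!\setminus\!o)$ yields the smooth versions used afterwards. The one genuinely delicate step is this lower bound, and within it the role of the Duality Theorem in converting the harmonicity $V\in\har(D_o\!\setminus\!o)$ into the support condition $\supp\mu\subset D\!\setminus\!D_o$, which is precisely what the elementary potential estimate of Proposition \ref{pt_below} requires.
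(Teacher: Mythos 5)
Your proposal is correct and follows essentially the same route as the paper: the upper bound via the Phr\'agmen\,--\,Lindel\"of comparison $V\leq g_D(\cdot,o)$ (the paper's Lemma on $ASP(D\!\setminus\!o)$) plus the boundedness of $g_D(\cdot,o)$ off $S_o$, and the lower bound in the Arens\,--\,Singer case via the Duality Theorem (yielding $V=\pt_{\varDelta_V-\delta_o}$ with $\supp\varDelta_V\subset D\!\setminus\!D_o$) combined with Proposition \ref{pt_below}, \eqref{{pmul}o}. The final choice $B:=\max\{B_1,C_6\}$ matches the paper's $B:=\max\{B',(-B'')^+\}$, so nothing further is needed.
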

\begin{proof} Let $V\in ASP^1(D\!\setminus\!o)\bigcap \har (D_o\!\setminus\!o)\supset JP^1(D\!\setminus\!o)\bigcap \har (D_o\!\setminus\!o)$.
\begin{lemma}\label{lemab}  If $V\in ASP(D\!\setminus\!o)$, then 
$V\leq g_D(\cdot, o)$ on $D$.
\end{lemma}
\begin{proof}[of Lemma\/ {\rm \ref{lemab}}]
The Arens\,--\,Singer potentials from Example \ref{expASP} satisfy   
conditions \eqref{limgV} of Proposition \ref{FLp}
with $c:=1$. Hence $V\leq g_D(\cdot, o)$ on $D$. 
\end{proof}
By Lemma \ref{lemab} we have
\begin{equation}\label{Babove}
\sup_{x\in S_o^{\cup(4r)}\!\setminus\!S_o}V(x)
\leqslant \sup_{x\in S_o^{\cup(4r)}\!\setminus\!S_o} g_D(x, o)=:B'=\const^+_{o,S_o,r}\in \RR^+\!\setminus\!o.
\end{equation} 
If $V\in  JP^1(D\!\setminus\!o)\bigcap \har (D_o\!\setminus\!o)$, then 
$V\geq 0$ on $\RR_{\infty}^d\!\setminus\!o$, and we obtain 
\eqref{{VvU0}J} with $B:=B'=\const^+_{o,S_o, r}$.
Otherwise, we use
\begin{lemma}\label{lembl} Under the conditions \eqref{o0SU},
 there is a constant $B''\in -\RR^+$ such that
\begin{equation}\label{pmulV}
\begin{split}
\inf_{x\in S_o^{\cup(4r)}\!\setminus\!o}V(x) &
\geqslant  B''=\const_{o,S_o,r,D_o}>-\infty
\\
\text{for every }V&
\in ASP^1(D\!\setminus\!o)
\bigcap \har (D_o\!\setminus\!o).
\end{split}
\end{equation} 
\end{lemma}
\begin{proof}[of Lemma\/ {\rm \ref{lembl}}] By Duality Theorem in version \eqref{interAi0}--\eqref{PVl0},  the Riesz measure  $\varDelta_V=c_d \bigtriangleup\! V$ is a Arens\,--\,Singer probability measure,    
$\varDelta_V\in \Meas_{\comp}^{1+}(D\!\setminus\!D_o)$
and $V=\pt_{\varDelta_V-\delta_o}$. 
By Proposition \ref{pt_below} with $\mu\overset{\eqref{muLo}}{:=}\varDelta_V$ and  $L\overset{\eqref{muLo}}{:=}S_o^{\cup(4r)}$, 
we have
\begin{multline*}
\inf_{x\in S_o^{\cup(4r)}\!\setminus\!o}V(x)= \inf_{x\in S_o^{\cup(4r)}\setminus o} {\pt}_{\varDelta_V-\delta_o}(x)
\overset{\eqref{{pmul}o}}{\geqslant} \varDelta_V(\RR^d)k_{d-2}\bigl(\dist(S_o^{\cup(4r)},\supp \varDelta_V)\bigr)-
k_{d-2}\left(\sup_{x\in S_o^{\cup(4r)}}|x|+|o|\right)
\\
\geqslant k_{d-2}\bigl(\dist(S_o^{\cup(4r)}\!\setminus\!o,\partial D_o)\bigr)+\const_{o,S_o,r}
\overset{\eqref{o0SU}}{=}
\const_{o,S_o,r,D_o}=:B''\overset{\eqref{o0SU}}{>}-\infty,
\end{multline*}
and we obtain  \eqref{pmulV}.
\end{proof}
If we set $B\overset{\eqref{Babove}}{:=} \max \{B', (-B'')^+\}$, then  \eqref{{VvU0}AS} follows from \eqref{Babove} and  \eqref{pmulV}.
\end{proof}

\begin{proof}[of implications\/ {\rm  [s\ref{{s}3}]$\Rightarrow$[s\ref{s9}]} and\/ {\rm [h\ref{{h}3}]$\Rightarrow$[h\ref{h8}]}] 
There is a domain $D_o$ satisfying \eqref{o0SU}. 
According to \eqref{S0seto}, we can choose a point $o\in \Int S_o$ such that  $u(o)\neq -\infty$. The latter means that 
\begin{equation*}
-\infty <\int_{S_o} k_{d-2}\bigl(|x-o|\bigr) \dd \varDelta_u(x)  
\overset{\text{i.e.}}{\Longrightarrow} \int_{S_o} g_D(x,o) \dd \varDelta_u(x)<+\infty.  
\end{equation*}
Thus, by Lemma \ref{lemab}, we obtain
\begin{equation}\label{Vu}
\int_{S_o} V \dd \varDelta_u\leqslant \int_{S_o} g_D(\cdot, o)
\dd \varDelta_u=:C_1=\const^+_{o,S_o,u}\in \RR^+    \quad
\text{for each $V\in ASP(D\!\setminus\!o)$}.
\end{equation}
Besides, by Lemma \ref{lembl}, we have 
\begin{equation}\label{VM}
-\infty <\const_{o,S_o,r, D_o,M}=B''\varDelta_M(S_o)\leqslant \int_{S_o} V\dd \varDelta_M
\quad
\text{for each $V\in ASP^1(D\!\setminus\!o)\bigcap \har (D_o)$}.  
\end{equation}
The conditions {[{s}\ref{{s}3}]} or  {[{h}\ref{{h}3}]} means that
there is a constant $C_2\in \RR$ such that  
\begin{subequations}\label{V2O}
\begin{align}
&\int_{D\!\setminus\!S_o} v \dd \varDelta_u
\leqslant \int_{D\!\setminus\!S_o}v\dd \varDelta_M +C_2\quad
\text{for each }v\overset{\eqref{VvU0}}{\in}
\left[\begin{array}{cc}
\mathcal V_b^+ &\text{in the case {[{s}\ref{{s}3}]}}, 
\tag{\ref{V2O}{}}\label{V2}
\\ 
 \mathcal V_b &\text{in the case {[{h}\ref{{h}3}]}},
\end{array} 
\right.\\
&\mathcal V_b^+:=\sbh_{00}^+(D\!\setminus\!S_o;\leq b)\bigcap  C^{\infty}(D\!\setminus\!S_o),\quad b:=b_+,
\notag
\\
&\mathcal V_b :=\sbh_{00}(D\!\setminus\!S_o; r,-b< b)\bigcap  C^{\infty}(D\!\setminus\!S_o), \quad
b:=\min \{b_+,-b_-\}.
\notag
\end{align}
\end{subequations}
Let $B\in \RR^+\!\setminus\!0$  be a constant from 
Theorem  \ref{l2}  on embedding with inclusions 
\eqref{VvU0}. We multiply both sides of inequality  \eqref{V2} by the number $B/b$ and obtain
\begin{equation*}
\int_{D\!\setminus\!S_o} v \dd \varDelta_u
\leqslant \int_{D\!\setminus\!S_o}v\dd \varDelta_M +BC_2
\text{ for each  }v\overset{\eqref{VvU0}}{\in}
\left[\begin{array}{cc}
\mathcal V_B^+ &\text{in the case {[{s}\ref{{s}3}]}},\\   
 \mathcal V_B &\text{in the case {[{h}\ref{{h}3}]}}.
\end{array} 
\right.
\end{equation*}
Hence, by inclusions  \eqref{VvU0} from Theorem \ref{l2}  on embedding, we have 
an inequality 
\begin{equation*}
\int_{D\!\setminus\!S_o} V \dd \varDelta_u
\leqslant \int_{D\!\setminus\!S_o}V\dd \varDelta_M +BC_2
\quad \text{for each  }V\overset{\eqref{VvU0}}{\in}
\left[\begin{array}{cc}
JP^1(D\!\setminus\!o)\bigcap \har (D_o\!\setminus\!o) \bigcap C^{\infty} (D\!\setminus\!o)&\text{in the case [s\ref{{s}3}]},\\   
ASP^1(D\!\setminus\!o)\bigcap \har (D_o\!\setminus\!o) \bigcap C^{\infty} (D\!\setminus\!o)&\text{in the case [h\ref{{h}3}]},
\end{array} 
\right.
\end{equation*}
which together with \eqref{Vu} (resp. \eqref{VM}) gives an  inequality
\begin{equation}\label{{V2Bin+}M}
\int_{D\!\setminus\!o} V \dd \varDelta_u
\leq \int_{D\!\setminus\!\{o\}}V\dd \varDelta_M +C, 
\text{ for each  }V\overset{\eqref{VvU0}}{\in}
\left[\begin{array}{cc}
JP^1(D\!\setminus\!o)\bigcap \har (D_o\!\setminus\!o) \bigcap C^{\infty} (D\!\setminus\!o)&\text{in the case [s\ref{{s}3}]},\\   
ASP^1(D\!\setminus\!o)\bigcap \har (D_o\!\setminus\!o) \bigcap C^{\infty} (D\!\setminus\!o)&\text{in the case [h\ref{{h}3}]},
\end{array} 
\right.
\end{equation}
where  $C:=C_1+BC_2-B''\varDelta_M(S_o)=\const_{o,S_o,r,D_o,u,M}\in \RR$ is a constant independent of $V$. 
By Definition \ref{bal} in the form \eqref{SQ}, \eqref{{V2Bin+}M}
means that the measure $\varDelta_M$ is an affine $\bigl(JP^1(D\!\setminus\!o)\bigcap \har (D_o\!\setminus\!o) \bigcap C^{\infty} (D\!\setminus\!o)\bigr)$-balayage of the measure $\varDelta_u$ in the case {[\rm {s}\ref{{s}3}]}
or an affine  $\bigl(ASP^1(D\!\setminus\!o)\bigcap \har (D_o\!\setminus\!o) \bigcap C^{\infty} (D\!\setminus\!o)\bigr)$-balayage of the measure $\varDelta_u$  in the case {[\rm {h}\ref{{h}3}]}, respectively. 
The implications 
[s\ref{{s}3}]$\Rightarrow$[s\ref{s9}] and  [h\ref{{h}3}]$\Rightarrow$[h\ref{h8}] are proved.
\end{proof}

\begin{proof}[of implications\/ {\rm  [s\ref{s9}]$\Rightarrow$[s\ref{{s}1Jinfty}]} and\/ {\rm [h\ref{h8}]$\Rightarrow$[h\ref{{h}1Jinfty}]}]
Under the statement [s\ref{s9}] (resp. [h\ref{h8}]), there is a constant $C\in \RR$ such that inequality \eqref{{V2Bin+}M} is fulfilled for each 
potential  
\begin{equation}\label{VAJ}
V\overset{\text{or}}{\in} \left[\begin{array}{c}
ASP^1(D\!\setminus\!o)\bigcap \har (D_o\!\setminus\!o) \bigcap C^{\infty} (D\!\setminus\!o),\\ 
JP^1(D\!\setminus\!o) \bigcap \har (D_o\!\setminus\!o) \bigcap C^{\infty} (D\!\setminus\!o),
\end{array}\right. \quad \text{respectively.}
\end{equation} 
We choose $o\in S_o:=D_o$ so that $u(o)\neq -\infty$. 
By the Duality Theorem and the generalized Poisson\,--\,Jensen formula from  Sec.~\ref{ASP}, we have two equalities
\begin{equation}\label{uMV}
\begin{split}
\int_{D\!\setminus\!o} V \dd \varDelta_u&=u(o)+\int_D u \dd \varDelta_V , 
\quad
\int_{D\!\setminus\!o} V \dd \varDelta_M=M(o)+
\int_D M  \dd  \varDelta_V
\\
\text{for each  }V&\in ASP^1(D\!\setminus\!o)\bigcap \har (D_o\!\setminus\!o) \bigcap C^{\infty} (D\!\setminus\!o),
\end{split}
\end{equation}
where, in view of affine bijection $\mathcal P_o$ from \eqref{interAi} onto \eqref{PVl}, the Riesz measures $\quad \varDelta_V\overset{\eqref{df:cm}}{:=}c_d \bigtriangleup\!V$
of potentials $V$ from \eqref{VAJ} run through all Arens\,--\,Singer (resp. Jensen) measures in 
\begin{equation}\label{interAio}
\left[\begin{array}{c}
\mathcal M_{AS}^{\infty}  (D\!\setminus\!D_o):=AS_o(D)\bigcap  \Meas^{+1}_{\comp}(D\!\setminus\!D_o)\bigcap \bigl(C^{\infty}(D)\dd \lambda_d\bigr),\\ 
\mathcal M_{J}^{\infty}  (D\!\setminus\!D_o):=J_o(D)\bigcap  \Meas^{+1}_{\comp}(D\!\setminus\!D_o)\bigcap \bigl(C^{\infty}(D)\dd \lambda_d\bigr).
\end{array}\right. \;\text{resp.}
\end{equation}
Using   \eqref{{V2Bin+}M} with \eqref{VAJ} and \eqref{uMV} with \eqref{interAio}, we obtain 
\begin{equation}\label{affbalo}
\begin{split}
\int_D u\dd \mu &\leq \int_D M\dd \mu+(C+M(o)-u(0)) \\
\quad\text{for each }\mu&\overset{\eqref{interAio}}{\in} \left[\begin{array}{cc}
 \mathcal M_{AS}^{\infty}  (D\!\setminus\!D_o)&
\text{for the case [h\ref{h8}]},\\ 
\mathcal M_{J}^{\infty}  (D\!\setminus\!D_o)&
\text{for the case  [s\ref{s9}]},
\end{array}\right., \text{respectively,}       
\end{split} 
\end{equation}
where the constant $(C+M(o)-u(0))$ is independent of $\mu$. Thus, by Definition \ref{bal} in the form \eqref{MQ}, we get $u\curlyeqprec_{\mathcal M_{AS}^{\infty}  (D\!\setminus\!D_o)}M$ (resp. $u\curlyeqprec_{\mathcal M_{J}^{\infty}  (D\!\setminus\!D_o)}M$). This is exactly [h\ref{{h}1Jinfty}] (resp. [s\ref{{s}1Jinfty}]) for $S_o:=D_o$. 
\end{proof}

\section{ From Arens\,--\,Singer and Jensen  measures to subharmonic functions}\label{proofs31}
\setcounter{equation}{0}

Our final part of the proof of Theorems \ref{crit1} and \ref{crit2} uses the following corollary from the more general results of \cite{KhaRozKha19}.

\begin{theorem}[{\rm \cite[Ch.~2, {\bf 8.2}, Corollary 8.1, II, 1, (i)-(ii)]{KhaRozKha19}}]\label{Th5} Let $D\subset \RR^d$ be a domain, $H$ be a convex cone in $\sbh_*(D)$  containing constants, $D_o\Subset D$ be a subdomain,  
$o\in D_o$. 
Suppose that one of the following two conditions is fulfilled:
\begin{enumerate}[{\rm (a)}]
\item\label{sla} 
for any locally bounded from above sequence of functions  $( h_k)_{k\in \NN} \subset H$, the upper semicontinuous regularization of the upper limit 
$\limsup_{k\to\infty} h_k$ belongs to $H$ provided that $\limsup_{k\to\infty} h_k(x)\not\equiv -\infty$ on  $D$;
\item\label{slb} 
 $H$ is sequentially closed in $L^1_{\loc}(D)$.
\end{enumerate}
 Let $u\in \sbh_*(D)$, $M\in C(D)$, and 
$0\neq \vartheta \in \Meas^+_{\comp}(D)$, $\supp \vartheta \subset D_o\Subset D$.  If there is a constant $C\in \RR$ such that
\begin{equation}\label{affbal}
\begin{split}
\int_D u\dd \mu \leq \int_D M\dd \mu+C
\quad\text{for each $\mu\in \mathcal M^{\infty}(D\!\setminus\!D_o)$such that $\vartheta\preceq_H \mu$},  \\      
\text{where }\mathcal M^{\infty}(D\!\setminus\!D_o):=\Meas^+_{\comp}(D\!\setminus\!D_o)\bigcap \bigl(C^{\infty}(D)\dd \lambda_d\bigr),
\end{split}
\end{equation} 
then there is a function $h\in H$ such that $u+h\leq M$ on $D$.
\end{theorem}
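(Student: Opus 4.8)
This statement is, up to notation, \cite[Ch.~2, {\bf 8.2}, Corollary 8.1, II, 1, (i)--(ii)]{KhaRozKha19}, so the plan is not to reprove it from scratch but to record how it fits the present framework, in two steps. \emph{Step~1} enlarges the class of test measures in \eqref{affbal}: I would show that the restricted hypothesis already implies, with a new constant $C'$ still independent of the measure, that $\int_D u\dd\mu\le\int_D M\dd\mu+C'$ for \emph{every} $\mu\in\Meas^+_{\comp}(D)$ with $\vartheta\preceq_H\mu$; in the language of Definition~\ref{bal} this says $u\curlyeqprec_{\mathcal B}M$, where $\mathcal B$ is the full class of compactly supported $H$-balayages of $\vartheta$ in $D$. \emph{Step~2} then feeds $u\curlyeqprec_{\mathcal B}M$ into the duality between the preorder $\prec_H$ and affine balayage from \cite{KhaRozKha19} to obtain $h\in H$ with $u+h\le M$ on $D$.

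For Step~1, fix a subdomain $D_o'$ with $D_o\Subset D_o'\Subset D$ and a radial mollifier $\rho_\varepsilon$. Given $\mu$ with $\vartheta\preceq_H\mu$, first sweep $\mu$ out of $D_o'$ to get $\mu'\in\Meas^+_{\comp}(D)$ supported in $D\!\setminus\!D_o'$, then set $\mu_\varepsilon:=\mu'\ast\rho_\varepsilon$ with $\varepsilon>0$ so small that $\supp\mu_\varepsilon\Subset D\!\setminus\!D_o$. Since every $h\in H$ is subharmonic, both sweeping out of an open set and convolution with a radial mollifier only increase $\int h\dd(\cdot)$, so $\int h\dd\vartheta\le\int h\dd\mu\le\int h\dd\mu'\le\int h\dd\mu_\varepsilon$; hence $\mu_\varepsilon\in\mathcal M^{\infty}(D\!\setminus\!D_o)$ with $\vartheta\preceq_H\mu_\varepsilon$, and the same monotonicity (applied to $u\in\sbh(D)$) gives $\int u\dd\mu\le\int u\dd\mu_\varepsilon$. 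On the $M$-side I use that $H$ contains the constants $\pm1$: then $\vartheta\preceq_H\mu$ forces $\mu(D)=\mu'(D)=\vartheta(D)$, the charge $\mu-\mu'$ is carried by the \emph{fixed} compact set $\clos D_o'$ with $|\mu-\mu'|(D)\le2\vartheta(D)$, so $\bigl|\int M\dd(\mu-\mu')\bigr|\le2\vartheta(D)\sup_{\clos D_o'}|M|=:C_0<+\infty$, while, $\mu'$ being already fixed, $\varepsilon$ can in addition be taken so small that $\bigl|\int M\dd(\mu_\varepsilon-\mu')\bigr|\le1$. Applying \eqref{affbal} to $\mu_\varepsilon$ now gives $\int u\dd\mu\le\int u\dd\mu_\varepsilon\le\int M\dd\mu_\varepsilon+C\le\int M\dd\mu+C_0+1+C$, i.e.\ Step~1 with $C':=C+C_0+1$.

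For Step~2 I would invoke the generalized Edwards/Hahn--Banach-type duality of \cite{KhaRozKha19} (the circle of ideas behind the equivalence [s\ref{{s}1}]$\Leftrightarrow$[s\ref{{s}1J}] of Theorem~\ref{crit1}): the relation $u\curlyeqprec_{\mathcal B}M$ from Step~1 is the exact dual form of $u\prec_H M$. Its easy half is the computation just done; the hard half --- ``affine $\mathcal B$-balayage $\Rightarrow$ existence of $h$'' --- is the genuine content, and is the part imported from \cite{KhaRozKha19}. I expect its proof to construct $h$ as the upper semicontinuous regularization of the supremum of a Perron-type subfamily of $H$ attached to $M-u$ along an exhaustion of $D$, and it is precisely when one checks that this envelope is $\not\equiv-\infty$ and still belongs to $H$ that one of the closedness hypotheses enters: \ref{sla} (closure of $H$ under regularized upper limits of locally bounded sequences) or \ref{slb} (sequential closedness of $H$ in $L^1_{\loc}(D)$). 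That verification --- together with the routine but necessary localization of the polar set $\{u=-\infty\}$ so that the Poisson--Jensen bookkeeping is legitimate --- is the main obstacle; Step~1 is by comparison elementary.
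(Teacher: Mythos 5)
Theorem~\ref{Th5} is not proved in this paper at all: it is imported verbatim, with the restricted test class $\mathcal M^{\infty}(D\!\setminus\!D_o)$ already built into its hypothesis, from \cite[Ch.~2, {\bf 8.2}, Corollary 8.1, II, 1, (i)--(ii)]{KhaRozKha19}, and your proposal likewise defers the entire substantive content (the construction of $h$ and the role of hypotheses (\ref{sla})/(\ref{slb})) to that same reference, so the two treatments coincide. Your additional ``Step~1'' (sweeping and mollifying to pass to arbitrary compactly supported $H$-balayages of $\vartheta$) is a reasonable sketch of how the cited corollary might be reduced to a more primitive duality statement, but it is superfluous for the theorem as quoted and is not verified here; nothing in it contradicts the paper.
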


\begin{proof}[of implications 
{\rm [h\ref{{h}1Jinfty}]$\Rightarrow$[h\ref{{h}1}]} and\/  {\rm [s\ref{{s}1Jinfty}]$\Rightarrow$[s\ref{{s}1}]}] We choose the convex cone 
\begin{equation*}
H:=\left[ \begin{array}{cc}
 \har(D)& \text{in the case {\rm  [h\ref{{h}1Jinfty}]} satisfying the condition \eqref{slb} of Theorem \ref{Th5},} \\ 
  \sbh_*(D)& \text{in the case {\rm [s\ref{{s}1Jinfty}]} satisfying the condition \eqref{sla} of Theorem \ref{Th5}},
\end{array} \right.
\end{equation*}
respectively. The statement {\rm  [h\ref{{h}1Jinfty}]} (resp. {\rm [s\ref{{s}1Jinfty}]}) means that we have \eqref{affbal} for $\vartheta:=\delta_o$ if we choose a domain $D_o$ so that $S_o\Subset D_o\Subset D$. 
By Theorem \ref{Th5}, there is a function $h\in \har(D)$ (resp. $h\in \sbh_*(D)$) such that $u+h\leq M$ on $D$. Thus, $u\prec_H M$, and 
Theorem \ref{crit2} (resp. Theorem \ref{crit1}) is proved.
\end{proof}


%
\section*{Conflict of interest}
The authors declare that they have no conflict of interest.


\begin{thebibliography}{86}

\bibitem{Anderson}  Anderson, S. L.: Green's Function, Jensen Measures, and Bounded Point Evaluations. J. Func. Analysis, {\bf 43}, 360--367  (1981)

\bibitem{Arsove} Arsove, M.G.:  Functions representable as differences of subharmonic
f\-u\-n\-c\-t\-i\-ons. Trans. Amer. Math. Soc. {\bf 75}, 327--365  (1953)

\bibitem{AL} 
 Arsove, M., Leutwiler, H.: Algebraic Potential Theory. 
Memoirs of the American Mathematical Society 
{\bf 23}, No. 226.  Providence, Rhode Island 
 (1980) 

\bibitem{BB66}
Bauer, H.: Harmonische R\"aume und ihre Potentialtheorie. Lecture Notes, {\bf 22}. Springer-Verlag, Berlin Heidelberg New York (1966)

\bibitem{BaiTalKha16}
Bayguskarov, T.Yu., Talipova, G.R., Khabibullin, B.N.:
 Subsequences of zeros for classes of entire functions of exponential type, allocated by restrictions on their growth, (Russian). Algebra i Analiz. {\bf 28}(2), 1--33 (2016);
English transl. in  St. Petersburg Math. J. {\bf 28}(2), 127--151  (2017)

\bibitem{BH} 
Bliedner, J., Hansen,  W.:  Potential Theory. An Analytic and Probabilistic Approach to Balayage, Springer-Verlag, Berlin (1986)


\bibitem{BB80}
Boboc, N., Bucur, G.: Order and convexity in potential theory. In: Kr\'al J., Luke\v{s} J., Netuka I., Vesel\'y J. (eds) Potential Theory Surveys and Problems. Lecture Notes in Mathematics, {\bf1344}. Springer-Verlag, Berlin Heidelberg etc. Proceedings of a Conference held in Prague, July 19--24, 1987 
 (1988)

\bibitem{BBC81} Boboc, N., Bucur, G., Cornea, A.,  H\"ollein H.:
Order and Convexity in Potential Theory: H-Cones. Lecture Notes in Mathematics, 
{\bf 853}.
Springer-Verlag, Berlin (1981)

\bibitem{Bourbaki} Bourbaki,  N.: \'El\'ements de Math\'ematique. Livre~VI. Int\'egration (French).
Hermann, Paris (1969)

\bibitem{Brelot} 
Brelot M.: On Topologies and Boundaries in Potential Theory. Springer-Verlag, Berlin--Heidelberg--New York (1971)

\bibitem{Schachermayer}  Bu, S., Schachermayer, W.: Approximation of Jensen me\-a\-su\-res
by image   me\-a\-su\-res  under holomorphic functions and applications.
Trans. Amer. Math. Soc.  {\bf 331}(2), 585--608  (1992)

\bibitem{Chirka}  Chirka, E.M.: Complex Analytic Sets. Kluwer Academic Publishers, Dordrecht (1985)

\bibitem{Chi18} Chirka, E.M.:  Potentials on a compact Riemann surface. (Russian)
In book: Complex analysis, mathematical physics, and applications.
Collected papers Tr. Mat. Inst. Steklova. {\bf 301}.
 MAIK Nauka/Interperiodica, Moscow, 287--319 (2018);
English transl. in Proc. Steklov Inst. Math. {\bf 301}, 272--303 (2018)

\bibitem{C-R} Cole, B.J., Ransford Th.,J:  Sub\-har\-mo\-ni\-ci\-ty wit\-hout
Up\-per Se\-mi\-co\-n\-ti\-nu\-i\-ty. J. Funct. Anal.  {\bf 147}, 420--442  (1997)

\bibitem{C-RJ} Cole, B.J., Ransford Th.,J: 
Jensen measures and harmonic measures. J. reine angew. Math. {\bf 541}, 29--53  (2001)

\bibitem{ConCor}
Constantinescu, C., Cornea, A.: Potential theory on harmonic spaces. 
Springer-Verlag, Berlin Heidelberg New York (1972)


\bibitem{Doob}  Doob, J.L.:  Classical Potential Theory and Its Probabilistic
Counterpart. Grundlehren Math. Wiss. Springer-Verlag, New York (1984)

\bibitem{Gamelin}  Gamelin, T.W.:
 Uniform Algebras and Jensen Measures.  Cam\-b\-r\-i\-d\-ge Univ. Press, Cam\-b\-r\-i\-d\-ge (1978)


\bibitem{HN} Hansen, W.,  Netuka, I.:
Reduced functions and Jensen measures.
 Proc.  Amer. Math. Soc. \textbf{146}(1), 153--160  (2018)

\bibitem{HK}  
Hayman, W.K.,  Kennedy, P.B.: Subharmonic functions.  {\bf 1}.  
Acad. Press, London etc. (1976)

\bibitem{Helms}
 Helms, L.L.: Introduction to Potential Theory, Wiley Interscience, 
New York London Sydney Toronto (1969)

\bibitem{Kha91}
 Khabibullin, B.N.: Sets of uniqueness in spaces of entire functions of a single variable. (Russian) Izv. Akad. Nauk SSSR Ser. Mat., {\bf 55}(5), 1101--1123  (1991); English transl. in 
Math. USSR-Izv., {\bf 39}(2), 1063--1084  (1992) 

\bibitem{Kha93}
Khabibullin, B.N.:
The theorem on the least majorant and its applications.I.~Entire and meromorphic functions. (Russian). Izv. RAN. Ser. Mat., {\bf 57}(1), 129--146 (1993); English transl. in  Russian Acad. Sci. Izv. Math., {\bf 42}(1),  115--131 (1994)

\bibitem{Kha94}
Khabibullin, B.N.:  The theorem on the least majorant and its applications. II. Entire and meromorphic functions of finite order. (Russian). Izv. RAN. Ser. matem. {\bf 57}(3), 70--91 (1993); English transl. in  Russian Acad. Sci. Izv. Math., {\bf 42}(3), 479--500  (1994)

\bibitem{Kha94BM}
 Khabibullin, B.N.: Nonconstructive proofs of the Beurling\,--\,Malliavin theorem on the radius of completeness, and nonuniqueness theorems for entire functions. (Russian).
Izv. RAN. Ser. matem. {\bf 58}(4), 125--148 (1994); English transl. in  
 Russian Acad. Sci. Izv. Math., \textbf{45}(1), 125--149  (1995)

\bibitem{Kha96}
 Khabibullin, B.N.: Zero sets for classes of entire functions and a representation of meromorphic functions. (Russian). Matem. Zametki. {\bf 59}(4), 611--617 (1996); English transl. in  Math. Notes, \textbf{59}(4), 440--444  (1996)

\bibitem{KhCV98} Khabibullin, B.N.: 
Variant of a problem on the representation of a meromorphic function as a quotient of entire functions. Complex Variables and Elliptic Equations. {\bf 37}(1), 371--384  (1998)

\bibitem{Kha99}
Khabibullin, B.N.: Completeness of systems of entire functions in spaces of holomorphic functions.
(Russian). Matem. Zametki.  \textbf{66}(4), 603--616  (1999);
English reansl. in  Math. Notes, \textbf{66}(4) , 495--506 (1999)

\bibitem{Kh010}	Khabibullin, B.N.: Dual approach to certain questions for weighted spaces of holomorphic functions. Entire functions in modern analysis (Tel-Aviv, December 14--19, 1997), Israel Math. Conf. Proc., \textbf{15}, Bar-Ilan Univ., Ramat Gan. 207--219  (2001) 

\bibitem{Kha01}
Khabibullin, B.N.: On the Growth of Entire Functions of Exponential Type near a Straight Line.
(Russian). Matem. Zametki. \textbf{70}(4), 621--635 (2001); English transl. in 
 Math. Notes. \textbf{70}(4), 560--573  (2001)

\bibitem{Kha01I}
Khabibullin, B.N.: Dual representation of superlinear functionals and its applications in function theory. I. (Russian). Izv. RaN. Ser. matem.  \textbf{65}(4), 205--224  (2001); English transl. in
Izv. Math. \textbf{65}(4), 835--852  (2001)

\bibitem{Kha01II}
Khabibullin, B.N.: Dual representation of superlinear functionals and its applications in function theory. II. (Russian). Izv. RaN. Ser. matem.  \textbf{65}(5), 167--190  (2001); English transl. in
Izv. Math. \textbf{65}(5), 1017--1039  (2001)

\bibitem{Kha02} Khabibullin, B.N.: The representation of a meromorphic function as the quotient of entire functions and Paley problem in ${\mathbb C}^n$: survey of some results. Mat. Fiz. Anal. Geom. \textbf{9}(2), 146--167 (2002)

\bibitem{Kha03MN}
 Khabibullin, B.N.: Growth of Entire Functions with Given Zeros and Representation of Meromorphic Functions. (Russian) Matem. Zametki. \textbf{73}(1), 120--134  (2003); English transl. in
  Math. Notes, \textbf{73}(1), 110--124  (2003)

\bibitem{Kha03} Khabibullin, B.N.:  Criteria for (sub-)harmonicity and continuation of (sub-)harmonic functions. (Russian).  Sibirsk. Mat. Zh. {\bf44}(4), 905--925 (2003); English transl. in 
 Siberian Math. J. {\bf 44}(4), 713--728 (2003)  


\bibitem{Kha06}
 Khabibullin, B.N.: Zero subsets, representation of meromorphic functions, and Nevanlinna characteristics in a disc. (Russian). Mat. Sb. \textbf{197}(2), 117--136  (2006);
English. transl. in  Sb. Math., \textbf{197}(2), 259--279  (2006)

\bibitem{Kha07}
 Khabibullin, B.N.:  Zero sequences of holomorphic functions, representation of~meromorphic functions, and harmonic minorants, (Russian). Mat. Sb. \textbf{198}(2), 121--160 (2007);
English transl. in Sb. Math. \textbf{198}(2), 261--298  (2007)

\bibitem{Kha09} Khabibullin B.~N.
 Zero sequences of holomorphic functions, representation of 
meromorphic functions. II.~Entire functions, (Russian).  Mat. Sb. {\bf  200}(2), 129--158  (2009);
Enflish transl. in  Sb. Math. {\bf  200}(2), 283--312  (2009)

\bibitem{Kha12}
Khabibullin, B.N.: Completeness of exponential systems and sets of uniqueness, (Russian). 4th Edition, Editorial and Publishing Center of Bashkir State University, Ufa   (2012)
\href{https://www.researchgate.net/publication/271841461}{https://www.researchgate.net/publication/271841461}

\bibitem{KhaAbdRoz18}  Khabibullin, B.N.,  Abdullina, Z.F., Rozit A. P.: A uniqueness theorem and subharmonic test functions, (Russian) Algebra i Analiz, \textbf{30}(2), 318--334  (2018); English transl. in St. Petersburg Math. J. {\bf 30}(2),  379--390 (2019)

\bibitem{KhaKha19}
 Khabibullin, B.N., Khabibullin, F.B.:  On the Distribution of Zero Sets of Holomorphic Functions. III. Inversion Theorems, (Russian) Funktsional. Anal. i Prilozhen., \textbf{53}(2), 42--58  (2019); English transl. in Funct. Anal. Appl. \textbf{53}(2), 110--123 (2019)

\bibitem{KhaKhaChe09}
 Khabibullin, B.N., Khabibullin, F.B., Cherednikova L. Yu.: 
Zero subsequences for classes of holomorphic functions: stability and the entropy of arcwise connectedness. I, II,  (Russian). Algebra i Analiz \textbf{20}(1), 146--236  (2009);
English transl. in St. Petersburg Math. J., \textbf{20}(1), 101--162  (2009) 

\bibitem{KhaRoz18}
Khabibullin, B.N., Rozit, A.P.: On the Distribution of Zero Sets of Holomorphic Functions, (Russian). Funktsional. Anal. i Prilozhen., \textbf{52}(1), 26--42  (2018);
English transl. in Funct. Anal. Appl., \textbf{52}(1), 21--34  (2018) 

\bibitem{KhaRozKha19}
  Khabibullin, B.N., Rozit,  A.P.,  Khabibullina, E.B.: Order versions of the Hahn\,--\,Banach theorem and envelopes. II. Applications to the function theory, (Russian). Complex Analysis. Mathematical Physics, Itogi Nauki i Tekhniki. Ser. Sovrem. Mat. Pril. Temat. Obz., {\bf 162}, VINITI, Moscow,
 93--135 (2019);
English transl. in Journal of Mathematical Sciences

\bibitem{KhaTalKha15}
Khabibullin, B.N., Talipova, G.R.,  Khabibullin, F.B.:
 Zero subsequences for Bernstein's spaces and the completeness of exponential systems in spaces of functions on an interval, (Russian). Algebra i Analiz. \textbf{26}(2), 185--215  (2014); English transl. in  St. Petersburg Math. J. \textbf{26}(2), 319--340  (2015)

\bibitem{KhaTam17A} Khabibullin, B.N.,  Tamindarova, N.R.: Uniqueness Theorems for Subharmonic and Holomorphic Functions of Several Variables on a Domain. Azerbaijan Journal of Mathematics,\textbf{7}(1), 70--79  (2017) 

\bibitem{KhaTam17L}
Khabibullin, B.N.,  Tamindarova, N.R.: Subharmonic test functions and the distribution of zero sets of holomorphic functions. Lobachevskii Journal of Math. {\bf 38}(1), 38--43  (2017)

\bibitem{Klimek}  Klimek, M.: Pluripotential Theory, London Math. Soc. Monogr. {\bf 6}.
Clarendon Press, New York,  (1991)

\bibitem{Koosis} Koosis, P.: The logarithmic integral. II. Cambridge Univ. Press, Cambridge (1992)

\bibitem{Koosis96}   Koosis, P.: Le\c cons sur le th\'eor\`eme Beurling et Malliavin, (French).
 Les Publications CRM, Mont\-r\'e\-al (1996)

\bibitem{Krantz}
 Krantz, S.G.:  Function Theory of Several Complex Variables. 2nd edition
 AMS  Providence, Rhode Island (2001)
 
\bibitem{KudKha09}
  Kudasheva, E. G., Khabibullin, B.N.: 
 The distribution of the zeros of holomorphic functions of moderate growth in the unit disc and the representation of meromorphic functions there, (Russian).
Matem. Sb. \textbf{200}(9), 1353--1382  (2009); English transl. in  Sb. Math. \textbf{200}(9), 1353--1382  (2009)


\bibitem{Landkoff}
Landkof, N.S.: Foundations of modern potential theory. 
Grundlehren Math. Wiss., {\bf 180},  Springer-Verlag, New York Heidelberg (1972)

\bibitem{Lelong} 
Lelong, P.:  Propri\'et\'es m\'etriques des vari\'et\'es analytiques complexes d\'efinies par une \'equation, (French).   Ann. Sci. Ecole Norm. Sup. {\bf 67}, 393--419 (1950)

\bibitem{LNMS}
Luke\v{s}, J., Netuka, I., Mal\'y, J.,  Spurn\'y j.:
Integral representation theory. Applications to convexity, Banach spaces and potential theory, de Gruyter Stud. Math. {\bf 35}. de Gruyter, Berlin New York (2010)

\bibitem{Meyer}
 Meyer P.-A.: Probability and potentials Blaisdell Publishing Co., Ginn and Co. Waltham Massachusetts Toronto London (1966)

\bibitem{MOS}
Matsaev, V., Ostrovski,i I., Sodin, M.: Variations on the theme of Marcinkiewicz' inequality. J. Anal. Math. \textbf{86}(1), 289--317  (2002) 

\bibitem{MenKha19}
Menshikova, E.B.,  Khabibullin, B.N.: On the Distribution of Zero Sets of Holomorphic Functions. II,
(Russian). Funktsional. Anal. i Prilozhen.  \textbf{53}(1), 84--87  (2019); English transl. in
Funct. Anal. Appl. \textbf{53}(1), 65--68  (2019)  

\bibitem{Poletsky93} 
Poletsky  E.A.:  Holomorphic currents.  Indiana Univ. Math. J.
 {\bf 42}(1), 85--144 (1993)

\bibitem{Po99}
Poletsky  E.A.:  Disk envelopes of functions II.  J. Funct. Anal.
 \textbf{ 163},  111--132 (1999)

\bibitem{R}  Ransford, Th.: Potential Theory in the Complex 
Plane.   Cambridge University Press, Cambridge (1995)

\bibitem{Ronkin} Ronkin, L.I.: Elements of the theory of analytic functions of several variables,
(Russian). Naukova Dumka,  Kiev (1977)

\bibitem{Santalo}   Santal\'o, L.A.: Integral Geometry and Geometric Probability. Addison-Wesley Publishing Company, Reading(MA), (2004) 

\bibitem{Sarason}
 Sarason, D.: Representing Measures for $R(X)$ and Their Green's Functions. J. Func. Anal.
\textbf{7}, 359--385  (1971)
\end{thebibliography}


\end{document}